%%%%%%%%%%%%%%%%%%%%%%%%%%%%%%%%%%%%%%%%%%%%%%%%%%%%%%%%%%%%%%%%
%  Title    :     %
%  Authors  :  Emilio Musso and Alvaro Pampano             %
%  Journal  :                                                                           %
%%%%%%%%%%%%%%%%%%%%%%%%%%%%%%%%%%%%%%%%%%%%%%%%%%%%%%%%%%%%%%%%

\documentclass[10pt]{amsart}

%\documentclass[leqno,10pt]{article} 
%\textwidth=14,5cm 
%\textheight=21cm
 
\usepackage{amsmath}
\usepackage{graphicx}

\usepackage{color} 
\usepackage[dvipsnames]{xcolor}

\usepackage{amssymb}

\usepackage{euscript,color}

\definecolor{red}{rgb}{1,0,0}

%%%%%%%%%%%%%%%%%%%%
\newtheorem{thm}{Theorem}[section]
\newtheorem{prop}[thm]{Proposition}
\newtheorem{cor}[thm]{Corollary}

\theoremstyle{definition}
\newtheorem{defn}[thm]{Definition}
\newtheorem{ex}[thm]{Example}

\theoremstyle{remark}
\newtheorem{remark}[thm]{Remark}

\numberwithin{equation}{section}

%%%%%%%%%%%%%%%%%%%%%

\def\bending{\kappa}
\def\R{\mathbb{R}}
\def\curvature{{\rm k}}
\def\SL{{\rm SL}(2,\R)}
\def\Aut{{\rm Aut}(\R^{2,2})}
\def\A{{\rm Aut}^{\uparrow}_+(\R^{2,2})}
\def\Cartan{\mathcal{C}^{\uparrow}_+(\R^{2,2})}
\def\AdS{{\rm AdS}}

\def\arctanh{{\rm arctanh}}

%%%%%%%%%%%%%%%%%%%%%%%%%%%%

\begin{document}
\title[Integrable Flows on Null Curves in the Anti-de Sitter $3$-Space]{Integrable Flows on Null Curves in the Anti-de Sitter $3$-Space}

\author{Emilio Musso}
\address{(E. Musso) Dipartimento di Matematica, Politecnico di Torino,
Corso Duca degli Abruzzi 24, I-10129 Torino, Italy}
\email{emilio.musso@polito.it}

\author{\'Alvaro P\'ampano}
\address{(A. P\'ampano) Department of Mathematics and Statistics, Texas Tech University, Lubbock, TX, 79409, USA}
\email{alvaro.pampano@ttu.edu}

\thanks{Authors partially supported by PRIN projects  PRIN 2017 ``Real and Complex Manifolds: Topology, Geometry and Holomorphic Dynamics" (protocollo 2017JZ2SW5-004) and PRIN 2022 ``Real and Complex Manifolds: Geometry and Holomorphic Dynamics" (protocollo 2022AP8HZ9) and by the GNSAGA of INDAM. The authors gratefully acknowledge the warm hospitality of the Department of Mathematics at the Politecnico di Torino and of the Texas Tech University Center in Sevilla.}

\subjclass[2010]{}

\date{\today}

\maketitle

\begin{abstract}
We formulate integrable flows related to the KdV hierarchy on null curves in the anti-de Sitter $3$-space ($\AdS$). Exploiting the specific properties of the geometry of $\AdS$, we analyze their interrelationships with Pinkall flows in centro-affine geometry. We show that closed stationary solutions of the lower order flow can be explicitly found in terms of periodic solutions of a Lam\'e equation. In addition,  we study the evolution of non-stationary curves arising from a 3-parameter family of periodic solutions of the KdV equation.\\

\noindent{\emph{Keywords:} Anti-de Sitter Space, Integrable Flows, KdV Equation, Lam\'e Equation, Null Curves, Special Functions.}\\
\noindent{\emph{Mathematics Subject Classification 2020:} 37K10, 53B30, 33E10, 33E05.} 
\end{abstract}

\section{Introduction} 

The Korteweg-De Vries (KdV) equation is a partial differential equation (PDE) which has a long history and a great amount of literature about it (see, for instance, \cite{GGKM,L1,L2,ZF}). Originally, the equation appeared as a model to understand the propagation of waves on shallow water surfaces (\cite{KV}). This equation belongs to a hierarchy of higher order evolution PDEs and is a prototype of completely integrable evolution equations (\cite{CIB1,CIB2,CQ1,CQ,CQ3,CQ4,Di,FK1,FK2,TW1,ZF}, among others). 

Throughout its history, the KdV equation has appeared in several geometric contexts. For instance, integrable geometric flows governed by the KdV equation and its hierarchy appear in the following topics: in the centro-affine geometry with the Pinkall flows on star-shaped curves (\cite{CP, P}), in the study of nondegenerate isothermic submanifolds in real and complex projective spaces (see the last pages of \cite{BDFP}) and, closely related to the topic of this paper, in the evolution of null curves in the Lorentz-Minkowski $(2+1)$-space (\cite{AGL,MN1Bis,MN2}). In particular, the work \cite{MN2} clearly suggests the existence of analogue evolution equations for null curves in the de Sitter and in the anti-de Sitter $3$-spaces. The specific case of the anti-de Sitter $3$-space ($\AdS$, for short) is special since for this ambient space the theory of null curves can be analyzed by combining together pairs of star-shaped curves in the centro-affine plane. This allows us to find a relation between the evolution of null curves and the Pinkall flows.

On the ground of these considerations, in this paper we investigate integrable geometric flows related to the KdV hierarchy on null curves in $\AdS$. More precisely, the paper is organized as follows: Section 2 collects the basic information about the geometry of $\AdS$ in the $\SL$ model. A peculiarity of $\AdS$ is that its automorphism group is not simple and its $2:1$ spin-cover is $\SL\times\SL$. Section 3 is devoted to examine the basic properties of null curves without inflection points in $\AdS$, which we denote by $\gamma:J\subseteq\R\longrightarrow\AdS$. Such curves possess canonical parameterizations and a third order differential invariant $\bending$, the \emph{bending} (often known as curvature or torsion). In addition, they also have a canonical \emph{Cartan frame field} $\mathcal{F}=\{\gamma,T,N,B\}$ defined along them, which originates a lift $(F_+,F_-):J\subseteq\R\longrightarrow\SL\times\SL$, the \emph{spinor frame field} along $\gamma$. The components $F_\pm$ of the spinor frame field along $\gamma$ are, precisely, the canonical central affine frame fields of two star-shaped curves $\eta_\pm$ in the centro-affine plane $\dot{\R}^2$ with central affine curvatures $\curvature_\pm=\bending\pm 1$, respectively (see Theorem \ref{relation}). The pair of star-shaped curves $(\eta_+,\eta_-)$ is referred as to the \emph{pair of cousins} associated with the null curve $\gamma$. 

In Section 4 we consider the evolution equation for null curves in $\AdS$ given by
\begin{equation*}
	\partial_t\gamma=-2\sqrt{2}\,\left(\bending\, T+2B\right),
\end{equation*}
where $\gamma(s,t)=\gamma_t(s)$, $t\in I\subseteq\R$, is a one parameter family of null curves parameterized by the ``proper time'' $s\in J$. For brevity and in analogy with \cite{AGL}, we call the above equation the \emph{LIEN flow}. We prove, in Theorem \ref{induced}, that the induced evolution equation on the bending $\bending(s,t)$ of $\gamma(s,t)$ is the KdV equation in the form
\begin{equation*}
	\partial_t\bending+\partial_{s}^3\bending-6\bending\,\partial_s\bending=0\,.
\end{equation*}
Moreover, we discuss how to build solutions of the LIEN flow beginning with solutions of the KdV. The starting point is the Lax pair formulation of the KdV equation given by Zakharov and Faddeev (\cite{ZF}). From this result, a one parameter family of maps $E_\lambda:J\times I\subseteq\R^2\longrightarrow\SL\times\SL$, $\lambda\in\R$, can be associated to a solution $\bending(s,t)$ of the KdV equation. Using the terminology of \cite{TU}, we call $E_\lambda$ the \emph{extended frame} of $\bending$ with spectral parameter $\lambda\in\R$. In Theorem \ref{induced}, we also show that given a solution $\bending(s,t)$ of the KdV equation, the map $\gamma=E_1 E_{-1}^{-1}:J\times I\subseteq\R^2\longrightarrow\AdS$ is a solution of the LIEN flow with bending $\bending(s,t)$. 

Subsequently, in Section 5, we focus on the stationary curves of the LIEN flow. As it is the case of the flows in the Lorentz-Minkowski $(2+1)$-space, stationary curves of the flow in $\AdS$ are critical points of functionals depending linearly on the first two conserved quantities of the KdV hierarchy. Such a variational problem has been considered in the last couple of decades by several authors (see for instance \cite{AGL,ABG,BFJL,MN1,MN1Bis} and references therein). The bending $\bending$ of these stationary curves is a solution of the third order ordinary differential equation
\begin{equation*}\label{ode}
	\bending'''+2\ell\bending'-6\bending\bending'=0\,,
\end{equation*}
where $\ell\in\R$ and $\left(\,\right)'$ denotes the derivative with respect to the proper time $s$. The bending of the evolution of stationary curves by the LIEN flow is the traveling wave solution $\bending(s+2\ell t)$ of the KdV equation. The general solutions of the above ordinary differential equation can be expressed in terms of elliptic functions. In particular, the periodic ones can be expressed in terms of the square of the Jacobi's ${\rm sn}$ function. In Theorem \ref{stationarycurve} we prove that the stationary curves with nonconstant periodic bending of the LIEN flow in $\AdS$ can be explicitly integrated employing the fundamental solutions of the Lam\'e equation of order one, in the case that these solutions are both periodic. The Floquet eigenvalue problem for the Lam\'e differential equation (see for instance \cite{V}) plays an essential role in constructing closed stationary curves of the flow and their time evolution. In fact, these solutions of the LIEN flow involve either Lam\'e functions of order one or else functions which involve the Jacobi's ${\rm sn}$ function and the local Heun functions (\cite{OLBC}).

In Section 6 we consider the 3-parameter family of periodic solutions of the KdV equation studied in \cite{KKSH} and investigate the corresponding evolution of the LIEN flow. Unlike the stationary case, the integration of these curves relies on solutions of Hill's equations that cannot be explicitly written in terms of known special functions. Therefore, our analysis is essentially based on numerical solutions of such equations.

The LIEN flow belongs to an infinite hierarchy of evolution equations on null curves, all of the form $\partial_t\gamma=\mathfrak{a}_nT+\mathfrak{b}_nN+\mathfrak{c}_nB$, where $\mathfrak{a}_n,\mathfrak{b}_n,\mathfrak{c}_n$ are functions (polynomial) of the bending $\bending$ and higher order derivatives with respect to the proper time $s$. The induced evolution equation on $\bending$ is the whole KdV hierarchy (the explicit construction of the hierarchy is given in Appendix B).

The results of this paper could be extended to other hierarchies of integrable evolution equations such as the mKdV, the Kaup-Kupershmidt, and Sawada-Kotera hierarchies (see, for instance, \cite{CIM,CQ1,CQ,CQ3,CQ4}). These hierarchies appear as the integrability conditions of geometric flows on curves as well: the mKdV hierarchy is related to the flows on curves in $2$-dimensional Riemannian space forms and flows on Legendrian curves of  $\mathbb{S}^3$ with its standard pseudo-Hermitian structure (\cite{Web}), the Kaup-Kupershmidt hierarchy appears in the context of flows on curves in $\mathbb{RP}^2$ and in the $3$-dimensional centro-affine space as well as flows on Legendrian curves in $\mathbb{S}^3$ with its standard Cauchy-Riemann structure (\cite{ChM}),  and the Sawada-Kotera hierarchy is related to the integrable flows on curves in the affine plane.

\section{Anti-de Sitter $3$-Space}

In this section we will introduce the model for the anti-de Sitter $3$-space ($\AdS$) given by the special linear group $\SL$ and describe its basic features.

Consider the vector space of $2\times 2$ real matrices $\R^{2,2}$ equipped with the quadratic form ${\rm q}$ of signature $(-,-,+,+)$ defined by
$${\rm q}(X)=-{\rm det}(X)=x_1^2x_2^1-x_1^1x_2^2\,,$$
for each $X=(x_i^j)\in\R^{2,2}$. The corresponding inner product will be denoted by $\langle\cdot,\cdot\rangle$ and we will consider the orientation of $\R^{2,2}$ determined by the volume form $\Omega=dx_1^1\wedge dx_2^2\wedge dx_1^2\wedge dx_2^1$.

On $\Lambda^2(\R^{2,2})$ we define an inner product $\langle\langle\cdot,\cdot\rangle\rangle$ by
$$\langle\langle U\wedge V,W\wedge Z\rangle\rangle={\rm det}\begin{pmatrix} \langle U,W\rangle & \langle U,Z\rangle \\ \langle V,W\rangle & \langle V,Z\rangle \end{pmatrix}.$$
A bivector $U\wedge V\in\Lambda^2(\R^{2,2})$ is of type $(-,-)$ if the restriction of the above inner product to ${\rm span}\{U\wedge V\}$ is negative definite, and of type $(-,0)$ if this restriction is nonzero semi-negative definite and degenerate. It can be shown that if $U\wedge V\in\Lambda^2(\R^{2,2})$ is of type $(-,-)$ and $W\wedge Z\in\Lambda^2(\R^{2,2})$ is of type $(-,0)$, then $\langle\langle U\wedge V,W\wedge Z\rangle\rangle\neq 0$. We fix the bivector of type $(-,-)$
$$U\wedge V=\begin{pmatrix} 1 & 0 \\ 0 & 1 \end{pmatrix}\wedge\begin{pmatrix} 0 & 1 \\ -1 & 0 \end{pmatrix}\in\Lambda^2(\R^{2,2})\,,$$ 
and we say that a bivector $W\wedge Z$ of type $(-,0)$ is \emph{positive} if $\langle\langle U\wedge V,W\wedge Z\rangle\rangle>0$. The set of all positive bivectors of type $(-,0)$ is denoted by $\mathcal{N}_+$. 
This choice defines the notion of time orientation in $\R^{2,2}$.

%\subsection{The Automorphism Group}

Let $\Aut$ be the $6$-dimensional Lie group consisting of all linear isometries $L:\R^{2,2}\longrightarrow\R^{2,2}$. This Lie group has four connected components. The connected component of the identity is
$$\A=\{L\in\Aut\,\lvert\,L^*(\Omega)=\Omega,\,L\wedge L(\mathcal{N}_+)=\mathcal{N}_+\}\,,$$
and its elements are \emph{causal automorphisms} of $\R^{2,2}$ (ie. automorphisms that preserve the choice of time orientation) that also preserve the orientation.

A \emph{Cartan basis} of $\R^{2,2}$ is a basis $C=\{C_1,C_2,C_3,C_4\}$ of $\R^{2,2}$ such that
$$g=\left(\langle C_i,C_j\rangle\right)_{i,j=1,...,4}=\begin{pmatrix} -1 & 0 & 0 & 0 \\ 0 & 0 & 0 & 1 \\ 0 & 0 & 1 & 0 \\ 0 & 1 & 0 & 0 \end{pmatrix}.$$
We say that the basis $C$ is \emph{positively oriented} if $\Omega(C)=1$ and \emph{future-directed} if $C_1\wedge C_2\in\mathcal{N}_+$. We denote by $\Cartan$ the set of all positively oriented and future-directed Cartan bases of $\R^{2,2}$. The group $\A$ acts simply transitively on the left of $\Cartan$ and, hence, $\Cartan$ carries the differentiable structure inherited from $\A$. 

With some abuse of notation, denote by $C_j$, $j=1,...,4$, the map that associates to each Cartan basis $C$ the $j$-th element of $C$. Differentiating these maps we have\footnote{Throughout this paper, the Einstein summation convention will be used.}
\begin{equation}\label{dC}
	dC_j=\omega_{j}^i C_i\,,
\end{equation} 
for every $j=1,...,4$, where $\omega_j^i$, $i,j=1,...,4$, are exterior differential $1$-forms on $\Cartan$. Differentiating \eqref{dC}, we get
\begin{equation}\label{domega}
	d\omega_j^i=-\omega_k^i\wedge\omega_j^k\,,
\end{equation}
$i,j=1,...,4$, which are the \emph{structure equations} of the frame manifold $\Cartan$. Differentiating now $g_{ij}=\langle C_i,C_j\rangle$ and using \eqref{dC}, we obtain 
\begin{equation}\label{homega}
	g_{ik}\omega_j^k+g_{jk}\omega_i^k=0\,,
\end{equation}
for all $i,j=1,...,4$. From \eqref{homega} we conclude that the matrix-valued $1$-form $\omega=\left(\omega_j^i\right)$ takes values in the Lie algebra $\mathfrak{g}=\{X\in\R^{2,2}\,\lvert\,X^t g+g X=0\}$. Thus,
\begin{equation}\label{omega}
	\omega=\begin{pmatrix} 0 & \omega_1^4 & \omega_1^3 & \omega_1^2 \\ \omega_1^2 & \omega_2^2 & -\omega_4^3 & 0 \\ \omega_1^3 & \omega_2^3 & 0 & \omega_4^3 \\ \omega_1^4 & 0 & -\omega_2^3 & -\omega_2^2 \end{pmatrix}.
\end{equation}
The $1$-forms $\omega_1^2$, $\omega_1^3$, $\omega_1^4$, $\omega_2^2$, $\omega_2^3$ and $\omega_4^3$ define an $\A$-invariant parallelization on $\Cartan$.

Let $\SL$ be the \emph{special linear group} of degree $2$ over $\R$ (ie. the group consisting of the $2\times 2$ real matrices of determinant one with the ordinary matrix multiplication) and consider the Lie group $\SL\times\SL$. For $(A,B)\in\SL\times\SL$, define the linear map
$$L_{(A,B)}:X\in\R^{2,2}\longmapsto A X B^{-1}\in\R^{2,2}\,.$$
Then, $\widehat{L}:(A,B)\in\SL\times\SL\longmapsto L_{(A,B)}\in\A$ is the \emph{$2:1$ spin-covering homomorphism} of $\A$. If we choose the Cartan basis $\{P_1,P_2,P_3,P_4\}\in\Cartan$ defined by
\begin{equation}\label{bases}
	P_1=\begin{pmatrix} 1 & 0 \\ 0 & 1 \end{pmatrix},\quad P_2=\begin{pmatrix} 0 & \sqrt{2} \\ 0 & 0 \end{pmatrix}, \quad P_3=\begin{pmatrix} -1 & 0 \\ 0 & 1 \end{pmatrix}, \quad P_4=\begin{pmatrix} 0 & 0 \\ \sqrt{2} & 0 \end{pmatrix},
\end{equation}
then
$$\pi_s:(A,B)\in\SL\times\SL\longmapsto\{A P_j B^{-1}\}_{j=1,2,3,4}\in\Cartan\,,$$
is a \emph{$2:1$ spin-covering map} such that
$$\pi_s^*(\omega)=\begin{pmatrix} 0 & \frac{1}{\sqrt{2}}\left(\alpha_1^2-\beta_1^2\right) & -\alpha_1^1+\beta_1^1 & \frac{1}{\sqrt{2}}\left(\alpha_2^1-\beta_2^1\right) \\ \frac{1}{\sqrt{2}}\left(\alpha_2^1-\beta_2^1\right) & \alpha_1^1+\beta_1^1 & \frac{1}{\sqrt{2}}\left(\alpha^1_2+\beta^1_2\right) & 0 \\ -\alpha_1^1+\beta_1^1 & \frac{1}{\sqrt{2}}\left(\alpha_1^2+\beta_1^2\right) & 0 & -\frac{1}{\sqrt{2}}\left(\alpha_2^1+\beta_2^1\right) \\ \frac{1}{\sqrt{2}}\left(\alpha_1^2-\beta_1^2\right) & 0 & -\frac{1}{\sqrt{2}}\left(\alpha^2_1+\beta^2_1\right) & -\left(\alpha_1^1+\beta_1^1\right) \end{pmatrix},$$
where $\omega$ is the matrix-valued $1$-form \eqref{omega} and $(\alpha=A^{-1}dA,\beta=B^{-1}dB)$ is the $\mathfrak{sl}(2,\R)\times\mathfrak{sl}(2,\R)$-valued Maurer-Cartan form. Here, $\mathfrak{sl}(2,\R)$ represents the Lie algebra of $\SL$.

\begin{remark} \emph{For every $j=1,...,4$, differentiating $A P_j B^{-1}$ and evaluating it at the identity element, we obtain
		$$\alpha\, P_j-P_j \, \beta=\pi_s^*(\omega_j^i)P_i\,.$$
Then, expanding the computation we conclude with the expression of $\pi_s^*(\omega)$.}
\end{remark}

%\subsection{The Anti-de Sitter $3$-Space}

The restriction of the inner product $\langle\cdot,\cdot\rangle$ of $\R^{2,2}$ to the special linear group $\SL$ gives a Lorentzian metric of constant sectional curvature $-1$. The special linear group $\SL$ endowed with the Lorentzian metric induced by $\langle\cdot,\cdot\rangle$ is a model for the \emph{anti-de Sitter $3$-space} ($\AdS$).

\begin{remark}\label{H} \emph{Throughout this paper, it will be implicitly assumed that the model for $\AdS$ is the special linear group $\SL$. Topologically, $\AdS\cong\mathbb{D}^2\times\mathbb{S}^1$, which in turn can be identified with the open solid torus in $\mathbb{R}^3$ swept out by the rotation of the (open) unit disc of the $Oxz$-plane centered at $(2,0,0)$ around the $Oz$-axis, known as the torical model for $\AdS$ (see Figure \ref{torical}). This identification will be used throughout the paper to visualize the geometric properties of null curves.}
\end{remark}

\begin{figure}[h]
	\begin{center}
		\makebox[\textwidth][c]{
			\includegraphics[height=5cm,width=5cm]{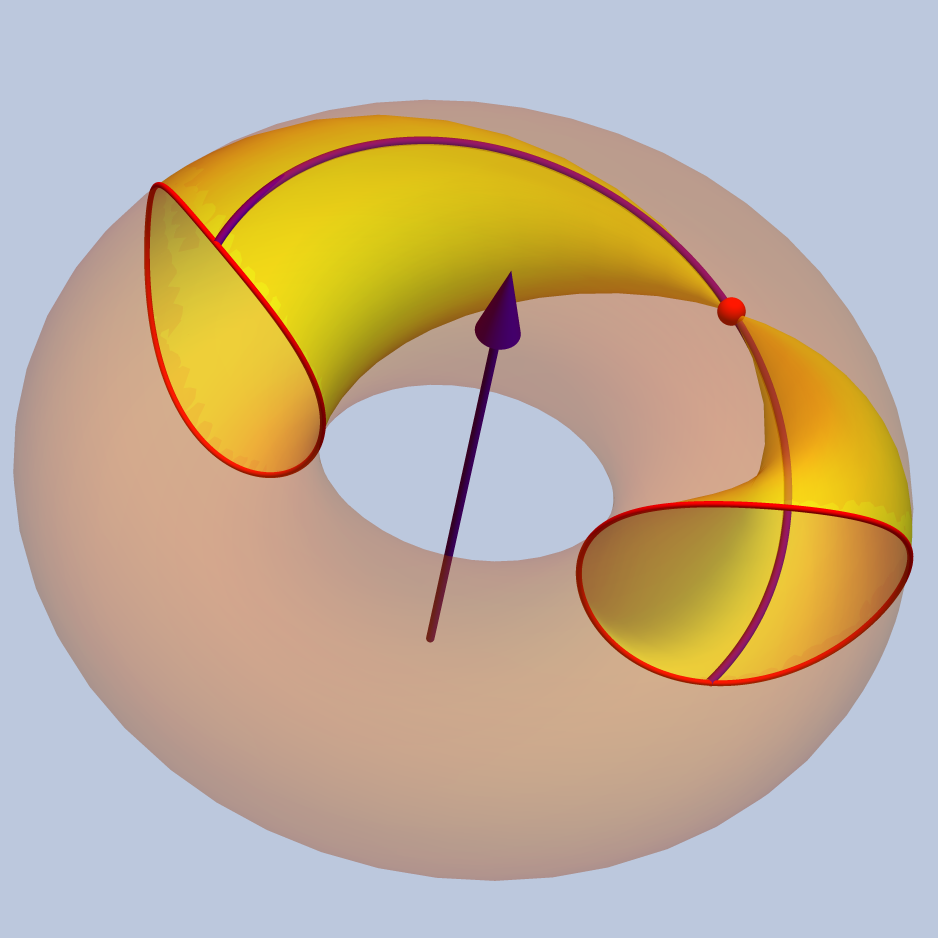}
		}
		\caption{\small{The torical model for $\AdS$ together with a light cone (in yellow) and a null geodesic (in purple).  At infinity, the light cone intersects the ideal boundary into two space-like curves (in red).}} \label{torical}
	\end{center}
\end{figure}

Consider the normal vector field $Q:X\in\AdS\longmapsto-2X\in\R^{2,2}$ and the orientation in $\AdS$ determined by the volume form $i_Q\Omega$. Given a null tangent vector $T\in T_X\AdS$, the bivector $X\wedge T$ is of type $(-,0)$. We define a time-orientation on $\AdS$ by declaring $T$ to be future-directed if $X\wedge T\in\mathcal{N}_+$. The group $\Aut$ acts transitively and effectively on the left of $\SL$ and can be viewed as the isometry group of $\AdS$. Then, $\A$ is the restricted isometry group consisting of all causal isometries of $\AdS$ that also preserve the orientation.

\begin{remark}\label{CE} \emph{The Minkowski, the de Sitter, and the anti-de Sitter $(n+1)$-dimensional spaces can be conformally embedded as open sets of the $(n+1)$-dimensional Einstein universe, that is $\mathbb{S}^1\times \mathbb{S}^n$ equipped with the Lorentzian metric $-ds^2_{\mathbb{S}^1}+ds^2_{\mathbb{S}^n}$. One of the differences of the anti-de Sitter space with respect to the other Lorentzian space forms is that, when considered as an open domain of $\mathbb{S}^1\times \mathbb{S}^n$, its boundary is smooth, diffeomorphic to the Cartesian product of $\mathbb{S}^1$ with an $(n-1)$-dimensional disc and the restriction of the Einstein pseudo-metric induces a conformally flat Lorentzian metric on the boundary. In the case of the Minkowski space, the boundary is singular and the restriction of the Einstein metric induces on the smooth locus, a degenerate quadratic form. The boundary of the de Sitter space is smooth and the restriction of the Einstein metric induces a conformally flat positive definite quadratic form (see for instance \cite{DMN}). This specificity of the anti-de Sitter space has been used in theoretical physics to establish a correspondence (known as the Maldacena correspondence) between string theory in the anti-de Sitter space and conformal field theory in its ideal boundary (\cite{Ma,Wi}).}
\end{remark}

\section{Geometry of Null Curves}

In this section we will collect the basic geometric properties of null curves in $\AdS$ and, employing the spinor frame field, relate them with a suitable pair of star-shaped curves in the centro-affine plane (Subsection 3.1). This relation will be illustrated for the case of null curves with constant bending (Subsection 3.2 and Appendix A).

Let $J\subseteq\R$ be an open interval. A smooth immersed curve $\gamma:J\subseteq \mathbb{R}\longrightarrow \AdS$ is \emph{null} if its velocity vector $\gamma'(s)$ is a null (or, light-like) vector for each $s\in J$. In other words, if $\langle \gamma'(s),\gamma'(s)\rangle=0$ for all $s\in J$. A null curve is \emph{future-directed} if the bivector $\gamma\wedge\gamma'$ of type $(-,0)$ is positive, that is, if $\gamma\wedge\gamma'\in\mathcal{N}_+$.

Let $\gamma:J\subseteq\R\longrightarrow\AdS$ be a future-directed null curve without inflection points (ie. such that $\gamma'\wedge\gamma''\neq 0$ holds). Since $\gamma'\wedge\gamma''\neq 0$, then $\gamma''$ is a space-like vector. We say that $\gamma$ is parameterized by the \emph{proper time}\footnote{The term ``proper time" is usually employed only for time-like curves. However, for convenience, we will use it throughout this paper for null curves.} if $\langle \gamma''(s),\gamma''(s)\rangle=4$ for every $s\in J$. The proper time $s$ is defined up to an additive constant. 

Assume that $\gamma$ is parameterized by the proper time and define the \emph{bending}\footnote{This function is sometimes called the \emph{curvature} (or, \emph{torsion}) of the null curve. Here, we have opted to introduce a different name to distinguish it from the curvature of star-shaped curves in the centro-affine plane that will be used later on.} of $\gamma$ as the function $\bending:J\subseteq\R\longrightarrow\R$ given by
$$\bending(s)=-\frac{1}{16}\langle\gamma'''(s),\gamma'''(s)\rangle\,.$$
It is clear that two null curves $\gamma$ and $\widetilde{\gamma}$ are \emph{equivalent to each other} if and only if $\bending(s)=\widetilde{\bending}(s+c)$ for some constant $c\in\R$.

We define the \emph{tangent} $T$, the \emph{normal} $N$, and the \emph{binormal}\footnote{Although we preserve the names, the vector fields $N(s)$ and $B(s)$ are not the (Frenet) normal and (Frenet) binormal defined for space-like and time-like curves. They are just vector fields defined along a null curve by the above equations.} $B$ vector fields along $\gamma$ by
\begin{eqnarray*}
	T(s)&=&\frac{1}{\sqrt{2}}\,\gamma'(s)\,,\\
	N(s)&=&\frac{1}{2}\,\gamma''(s)\,,\\
	B(s)&=&\frac{1}{\sqrt{2}}\,\bending(s)\,\gamma'(s)-\frac{1}{2\sqrt{2}}\,\gamma'''(s)\,.
\end{eqnarray*}
It then follows that $\{T,N,B\}$ is a moving frame along $\gamma$ and, for every $s\in J$, $\mathcal{F}(s)=\{\gamma(s),T(s),N(s),B(s)\}$ is a future-directed Cartan basis of $\mathbb{R}^{2,2}$ satisfying ${\rm det}(\mathcal{F}(s))=\pm 1$.

\begin{remark}\label{assumptions} \emph{From now on we assume that all our curves $\gamma:J\subseteq\R\longrightarrow\AdS$ are null, future-directed, parameterized by their proper time $s\in J$, and have no inflection points. Moreover, possibly acting on $\gamma$ with an orientation-reversing causal isometry of $\R^{2,2}$, we may assume that $\mathcal{F}(s)$ is positively oriented for every $s\in J$ (ie. ${\rm det}(\mathcal{F}(s))=1$ for every $s\in J$).}
\end{remark}

The map $\mathcal{F}: J\subseteq\R\longrightarrow\Cartan$ is said the \emph{Cartan frame field} along $\gamma$ and it satisfies the following \emph{Frenet-type equations}
\begin{equation}\label{dF}
	d\mathcal{F}=\mathcal{F}\,\mathcal{K}\,ds\,,
\end{equation}
where $\mathcal{K}$ is the matrix given by
\begin{equation}\label{K}
	\mathcal{K}=\begin{pmatrix} 0 & 0 & 0 & \sqrt{2} \\ \sqrt{2} & 0 & \sqrt{2}\,\bending & 0 \\ 0 & \sqrt{2} & 0 & -\sqrt{2}\,\bending \\ 0 & 0 & -\sqrt{2} & 0 \end{pmatrix}.
\end{equation}

%\subsection{The Spinor Frame Field}

Let $\pi_s:\SL\times\SL\longrightarrow\Cartan$ be the $2:1$ spin-covering map introduced in Section 2. Then, the \emph{spinor frame field} along $\gamma$ is a lift $(F_+,F_-)$ of the Cartan frame field $\mathcal{F}$ to $\SL\times\SL$ via $\pi_s$. The spinor frame field is defined up to a sign.

Consider the Cartan basis $\{P_1,P_2,P_3,P_4\}\in\Cartan$ given by \eqref{bases}. Then, the Cartan frame field $\mathcal{F}$ along $\gamma$ is given by
$$\gamma=F_+ F_-^{-1},\quad T=F_+ P_2 F_-^{-1},\quad N=F_+ P_3 F_-^{-1},\quad B=F_+ P_4 F_-^{-1}.$$
Consequently, the spinor frame field $(F_+,F_-)$ along $\gamma$ satisfies the linear systems
\begin{eqnarray}
	dF_+&=&F_+ \begin{pmatrix} 0 & \bending+1 \\ 1 & 0 \end{pmatrix}ds\,,\label{dF+}\\
	dF_-&=&F_- \begin{pmatrix} 0 & \bending-1 \\ 1 & 0 \end{pmatrix}ds\,,\label{dF-}
\end{eqnarray}
where $\bending$ is the bending of $\gamma$. These equations are the spinorial counterpart of the Frenet-type equations \eqref{dF} and, hence, we will refer to them as the \emph{spinorial Frenet-type equations} of $\gamma$.

The spin-covering map $\pi_s$ provides us with the ideal approach to relate null curves in $\AdS$ with a suitable pair of star-shaped curves in the centro-affine plane. 

\subsection{Relation to Star-Shaped Curves}

Let $\dot{\R}^2=\R^2\setminus\{(0,0)\}$ be the centro-affine plane (ie. the once-punctured Euclidean plane $\R^2$). 

A smooth immersed curve $\eta:J\subseteq\R\longrightarrow\dot{\R}^2$ is \emph{star-shaped} if $\eta\wedge\eta'\neq 0$. Any star-shaped curve $\eta$ can be parameterized by the \emph{central affine arc length}, which is defined so that ${\rm det}(\eta,\eta')=1$. The function $\curvature=-{\rm det}(\eta',\eta'')$ is the \emph{central affine curvature}\footnote{Our definition of the central affine curvature coincides with that of Terng-Wu (\cite{TW1}) and it has the opposite sign of that of Pinkall (\cite{P}).} of $\eta$. The map $F=(\eta,\eta'):J\subseteq\R\longrightarrow\SL$ is the \emph{canonical central affine frame field} along $\eta$. Differentiating ${\rm det}(\eta,\eta')=1$, we get that the canonical central affine frame field $F$ satisfies the Frenet-type equations,
\begin{equation}\label{canonicaldF}
	dF=F \begin{pmatrix} 0 & \curvature \\ 1 & 0 \end{pmatrix}ds\,.
\end{equation}

\begin{defn} Let $\eta$ and $\overline{\eta}$ be two star-shaped curves parameterized by the central affine arc length. The pair $(\eta,\overline{\eta})$ is a \emph{pair of cousins} if the central affine curvatures $\curvature$ and $\overline{\curvature}$ of $\eta$ and $\overline{\eta}$, respectively, are related by $(\curvature-\overline{\curvature})/2=1$.
\end{defn} 

We will next explain the relation between pairs of cousins of star-shaped curves in $\dot{\R}^2$ and null curves in $\AdS$.

Let $\gamma:J\subseteq\R\longrightarrow\AdS$ be a null curve and $(F_+,F_-)$ be the spinor frame field along $\gamma$. Denote by $\eta_+$ and $\eta_-$, respectively, the first column vectors of the components of $(F_+,F_-)$. It follows from \eqref{dF+} and \eqref{dF-} that $\eta_\pm:J\subseteq\R\longrightarrow\dot{\R}^2$ are two star-shaped curves parameterized by the central affine arc length, with central affine curvatures $\curvature_+=\bending+1$ and $\curvature_-=\bending-1$, respectively. Consequently, $(\eta_+,\eta_-)$ is a pair of cousins. 

Conversely, given a pair of star-shaped cousins $(\eta,\overline{\eta})$ with canonical central affine frame fields $F$ and $\overline{F}$, respectively, the curve 
$$\gamma=F\overline{F}^{-1}$$ 
is a null curve in $\AdS$ parameterized by the proper time and without inflection points such that the bending of $\gamma$ is given by $\bending=(\curvature+\overline{\curvature})/2$, where $\curvature$ and $\overline{\curvature}$ are the central affine curvatures of $\eta$ and $\overline{\eta}$, respectively. In addition, $(F,\overline{F})$ is the spinor frame field along $\gamma$.

We summarize this characterization in the following result.

\begin{thm}\label{relation} Let $\gamma:J\subseteq\R\longrightarrow\AdS$ be a null curve with bending $\bending$ and spinor frame field $(F_+,F_-)$ along $\gamma$. Then, the first column vectors of $F_\pm$ form a pair of star-shaped cousins $(\eta_+,\eta_-)$ in $\dot{\R}^2$ with central affine curvatures
	$$\curvature_+=\bending+1\,,\quad\quad\quad\curvature_-=\bending-1\,,$$
respectively.

Conversely, let $(\eta_+,\eta_-)$ be a pair of star-shaped cousins with central affine curvatures $\curvature_+$ and $\curvature_-$, and canonical central affine frame fields $F_+$ and $F_-$, respectively. Then,
	$$\gamma=F_+F_-^{-1}\,,$$
is a null curve in $\AdS$ with bending $\bending=(\curvature_++\curvature_-)/2$ and spinor frame field $(F_+,F_-)$ along it. 
\end{thm}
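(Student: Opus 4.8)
The plan is to prove the two directions of Theorem \ref{relation} essentially by unwinding the definitions and comparing the relevant structure equations, using the explicit form of the spin-covering map $\pi_s$ recorded in Section 2.

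\emph{First direction.} Let $(F_+,F_-)$ be the spinor frame field along $\gamma$. By definition this is a lift of the Cartan frame field $\mathcal{F}$ through $\pi_s$, so it satisfies the spinorial Frenet-type equations \eqref{dF+} and \eqref{dF-}; this I take as already established in the excerpt. Write $F_+ = (\eta_+, \eta_+^\sharp)$ for its two column vectors. Reading off the first column of \eqref{dF+} gives $\eta_+' = \eta_+^\sharp$ and, reading the second column, $(\eta_+^\sharp)' = \eta_+'' = (\bending+1)\,\eta_+$. Since $F_+$ takes values in $\SL$, we have $\det(\eta_+,\eta_+') = \det F_+ = 1$, so $\eta_+$ is star-shaped and parameterized by central affine arc length. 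Then its central affine curvature is $\curvature_+ = -\det(\eta_+',\eta_+'') = -\det(\eta_+', (\bending+1)\eta_+) = (\bending+1)\det(\eta_+,\eta_+') = \bending+1$, where I used antisymmetry of $\det$. Moreover $F_+ = (\eta_+,\eta_+')$ is by construction the canonical central affine frame field of $\eta_+$, and \eqref{dF+} is exactly the frame equation \eqref{canonicaldF} with $\curvature = \bending+1$. The identical computation applied to \eqref{dF-} gives that $\eta_-$ is star-shaped with $\curvature_- = \bending - 1$ and canonical frame $F_-$. Finally $(\curvature_+ - \curvature_-)/2 = 1$, so $(\eta_+,\eta_-)$ is a pair of cousins.

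\emph{Converse direction.} Start with a pair of cousins $(\eta_+,\eta_-)$ with central affine curvatures $\curvature_\pm$ satisfying $(\curvature_+-\curvature_-)/2 = 1$ and canonical frames $F_\pm = (\eta_\pm,\eta_\pm')$, which satisfy \eqref{canonicaldF}. Set $\gamma = F_+ F_-^{-1}$; since $F_\pm \in \SL$, $\gamma$ takes values in $\SL = \AdS$. Using the formulas $\gamma = F_+ P_1 F_-^{-1}$, $T = F_+P_2 F_-^{-1}$, etc. from Section 3 together with $P_1 = \mathrm{Id}$, and the Leibniz rule $d\gamma = (dF_+)F_-^{-1} - F_+ F_-^{-1}(dF_-)F_-^{-1}$, substitute \eqref{canonicaldF} for both $dF_\pm$ and expand in the basis $\{P_1,P_2,P_3,P_4\}$. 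Writing the right-invariant Maurer-Cartan-type combination and recalling $\bending := (\curvature_++\curvature_-)/2$, the differential equation for the quadruple $\{\gamma, F_+P_2F_-^{-1}, F_+P_3F_-^{-1}, F_+P_4F_-^{-1}\}$ matches the Frenet-type equations \eqref{dF} with $\mathcal{K}$ as in \eqref{K}: the term $\curvature_+ - \curvature_- = 2$ produces the constant $\pm\sqrt2$ entries in $\mathcal{K}$ while $\curvature_+ + \curvature_- = 2\bending$ produces the $\pm\sqrt2\,\bending$ entries. In particular $\gamma' = \sqrt2\, T$ is null, $\langle\gamma'',\gamma''\rangle = 4$, $\gamma'\wedge\gamma''\neq 0$, and the bending computed from $-\tfrac1{16}\langle\gamma''',\gamma'''\rangle$ equals $\bending = (\curvature_++\curvature_-)/2$; one also checks $\gamma\wedge\gamma' = \sqrt2\,F_+(P_1\wedge P_2)F_-^{-1}$ lies in $\mathcal{N}_+$, so $\gamma$ is future-directed. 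Since $\{\gamma, F_+P_2F_-^{-1},\dots\} = \pi_s(F_+,F_-)$ by the definition of $\pi_s$, the pair $(F_+,F_-)$ is the spinor frame field along $\gamma$.

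\emph{Main obstacle.} The two directions are each a routine computation; the one genuinely bookkeeping-heavy step is the converse, where one must carefully carry out the conjugation $F_+ P_j F_-^{-1}$, differentiate, and re-express everything in the Cartan basis $\{P_j\}$ to recognize the matrix $\mathcal{K}$ of \eqref{K} — in particular keeping track of the factors of $\sqrt2$ coming from \eqref{bases} and confirming that the mixed-sign structure of $\mathcal{K}$ arises precisely from the $+1$ versus $-1$ shift distinguishing $\curvature_+$ from $\curvature_-$. The verification of the time-orientation and proper-time normalization conditions is then immediate from the bivector $\gamma\wedge\gamma'$ and from $\langle \gamma'',\gamma''\rangle$, using the inner product values encoded in the Cartan matrix $g$.
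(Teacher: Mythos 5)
Your proposal is correct and follows essentially the same route as the paper, which states Theorem \ref{relation} as a summary of the discussion preceding it: the forward direction is read off from the spinorial Frenet-type equations \eqref{dF+}--\eqref{dF-} exactly as you do, and the converse amounts to conjugating the Cartan basis \eqref{bases} by $(F_+,F_-)$ (equivalently, using the displayed formula for $\pi_s^*(\omega)$ with $\alpha=F_+^{-1}dF_+$, $\beta=F_-^{-1}dF_-$) to recover $\mathcal{K}$ of \eqref{K}, with $\curvature_+-\curvature_-=2$ giving the constant $\sqrt{2}$ entries and $\curvature_++\curvature_-=2\bending$ the bending entries. Your bookkeeping of the $\sqrt{2}$ factors, the proper-time normalization, and the future-directedness via $\gamma\wedge\gamma'\in\mathcal{N}_+$ is accurate.
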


\begin{defn} The pair of star-shaped cousins $(\eta_+,\eta_-)$ is called the \emph{pair of cousins associated with the null curve $\gamma$}.
\end{defn} 

\subsection{The Orbit-Type of a Null Curve with Periodic Bending}\label{OrbitType}

Let $\gamma : J=\R\longrightarrow \AdS$ be a null curve with nonconstant  periodic bending $\bending$ of lest period $\rho>0$ and spinor frame field $(F_+,F-)$. The matrix 
$$M=(M_+,M_-)=\left(F_+(\rho)F_+(0)^{-1}, F_-(\rho)F_-(0)^{-1}\right),$$ 
is the monodromy of $\gamma$.  

Consider the action of $\SL$ on itself by the inner automorphisms ${\rm Int}_A : X\longmapsto AXA^{-1}$. There are two fixed points, namely, $\pm {\rm Id}$, and three types of orbits: parabolic, elliptic, and hyperbolic. We say that $X$ is parabolic if it is not diagonalizable over ${\mathbb C}$, hyperbolic if it is diagonalizable over $\R$ with eigenvalues $r$ and $1/r$, $r\neq \pm 1$, and elliptic if it is diagonalizable over ${\mathbb C}$, with eigenvalues $e^{\pm i\theta}$, $\theta \in (0,\pi)$. Let ${\mathtt P}(2,\R)$ be the closed set of the parabolic elements and ${\mathtt E}(2,\R)$ and ${\mathtt H}(2,\R)$ be the open sets of the elliptic and hyperbolic elements, respectively.  

Null curves with periodic bending can be classified by the type of orbit of their monodromies (there are 5 types). We say that the type of orbit is $(E,E)$ if $F_{\pm}\in {\mathtt E}(2,\R)$, $(H,H)$ if $F_{\pm}\in {\mathtt H}(2,\R)$ and analogously for the other cases. The trajectory of $\gamma$ is invariant by the group generated by the monodromy (the monodromy group of $\gamma$). The curve is closed if and only if its monodromy group is finite. This happens if and only if $M_{\pm}$ are diagonalizable over ${\mathbb C}$, with eigenvalues $e^{\pm i2\pi q_{\pm}}$, where $q_{\pm}\in [0,1]\cap {\mathbb Q}$. If this is the case $\gamma$ is periodic with least period ${\rm lcm}(n_+,n_-)\rho/ {\mathtt s}$, where $q_{\pm}=m_{\pm}/n_{\pm}$, ${\rm gcd}(m_{\pm},n_{\pm})=1$ and ${\mathtt s}\in \{1,1/2\}$ is the spin of $\gamma$.

In the following example we will describe and illustrate the relation of Theorem \ref{relation} for the case of null curves with constant bending.

\begin{ex}\label{example} Consider a null curve $\gamma:J=\R\longrightarrow\AdS$ with constant bending $\bending$. Depending on the possible combinations of the pair of cousins associated with $\gamma$ (which, in this case, each of the star-shaped curves has constant central affine curvatures $\curvature_\pm$), there are five possible cases. In order to obtain closed null curves, we will need both canonical central affine frame fields $F_\pm$ to be periodic. This corresponds to the case $\bending<-1$, for which the pair of cousins is composed by two ellipses. For visualization purposes, we will restrict here to this case (for the other cases see Appendix A). Since $\bending<-1$ is constant, the spinorial Frenet-type equations of $\gamma$, \eqref{dF+} and \eqref{dF-}, can be analytically solved obtaining the elliptic $1$-parameter subgroups
	$$F_\pm=\begin{pmatrix} \cos\left(\sqrt{\lvert \bending\pm 1\rvert}\,s\right) & -\sqrt{\lvert\bending\pm 1\rvert}\,\sin\left(\sqrt{\lvert \bending\pm 1\rvert}\,s\right) \\ \frac{1}{\sqrt{\lvert \bending\pm 1\rvert}}\sin\left(\sqrt{\lvert \bending\pm 1\rvert}\,s\right) & \cos\left(\sqrt{\lvert \bending\pm 1\rvert}\,s\right)\end{pmatrix},$$
	respectively. Consequently, the pair of cousins associated with $\gamma$ are given, respectively, by
	$$\eta_\pm(s)=\left(\cos\left(\sqrt{\lvert \bending\pm 1\rvert}\,s\right),\frac{1}{\sqrt{\lvert \bending\pm 1\rvert}}\sin\left(\sqrt{\lvert \bending\pm 1\rvert}\,s\right)\right),$$
	while $\gamma$ itself can be computed as $\gamma=F_+F_-^{-1}$.
	
	The least periods of the components of the spinor frame field $(F_+,F_-)$ are, respectively,
	$$\rho_{\pm}=\frac{2\pi}{\sqrt{\lvert \bending\pm 1\rvert}}\,.$$
	Therefore, the null curve $\gamma$ is closed if and only if the quotient $\rho_+/\rho_-\in\mathbb{Q}$ is a rational number, say $m/n$, where $m>n$ are relatively prime natural numbers. In other words, the least periods $\rho_+$ and $\rho_-$ are commensurable. This implies that the bending of $\gamma\equiv\gamma_{m,n}$ must be
	$$\bending\equiv\bending_{m,n}=-\frac{m^2+n^2}{m^2-n^2}\,.$$
	
	In Figures \ref{F1} and \ref{F2} we show two examples of closed null curves ($\gamma_{7,3}$ and $\gamma_{8,3}$, respectively) with constant bending $\bending_{m,n}<-1$ in the torical model for $\AdS$ as well as their associated pair of star-shaped cousins.
	
	If $\gamma$ is a closed (ie. periodic) curve with least period $\rho$, then there are two possibilities for its periodic spinor frame field $(F_+,F_-)$: either the least period of $(F_+,F_-)$ is $\rho$, ie. $(F_+(0),F_-(0))=(F_+(\rho),F_-(\rho))$; or else the least period of $(F_+,F_-)$ is $2\rho$, ie. $(F_+(0),F_-(0))=(-F_+(\rho),-F_-(\rho))$. In the former case we say that $\gamma$ has spin $1$ and, in the latter, spin $1/2$. Let $\gamma_{m,n}$ be a closed null curve with constant bending $\bending_{m,n}$. If $m+n$ is even, the spin of $\gamma_{m,n}$ is $1/2$ and $\gamma_{m,n}$ is a torus knot of type $((n-m)/2,(n+m)/2)$ (see Figure \ref{F1}), while if $m+n$ is odd, the spin of $\gamma_{m,n}$ is $1$ and $\gamma_{m,n}$ is a torus knot of type $(n-m,n+m)$ (see Figure \ref{F2}).
\end{ex}

\begin{figure}[h]
	\begin{center}
		\makebox[\textwidth][c]{
			\includegraphics[height=4cm,width=4cm]{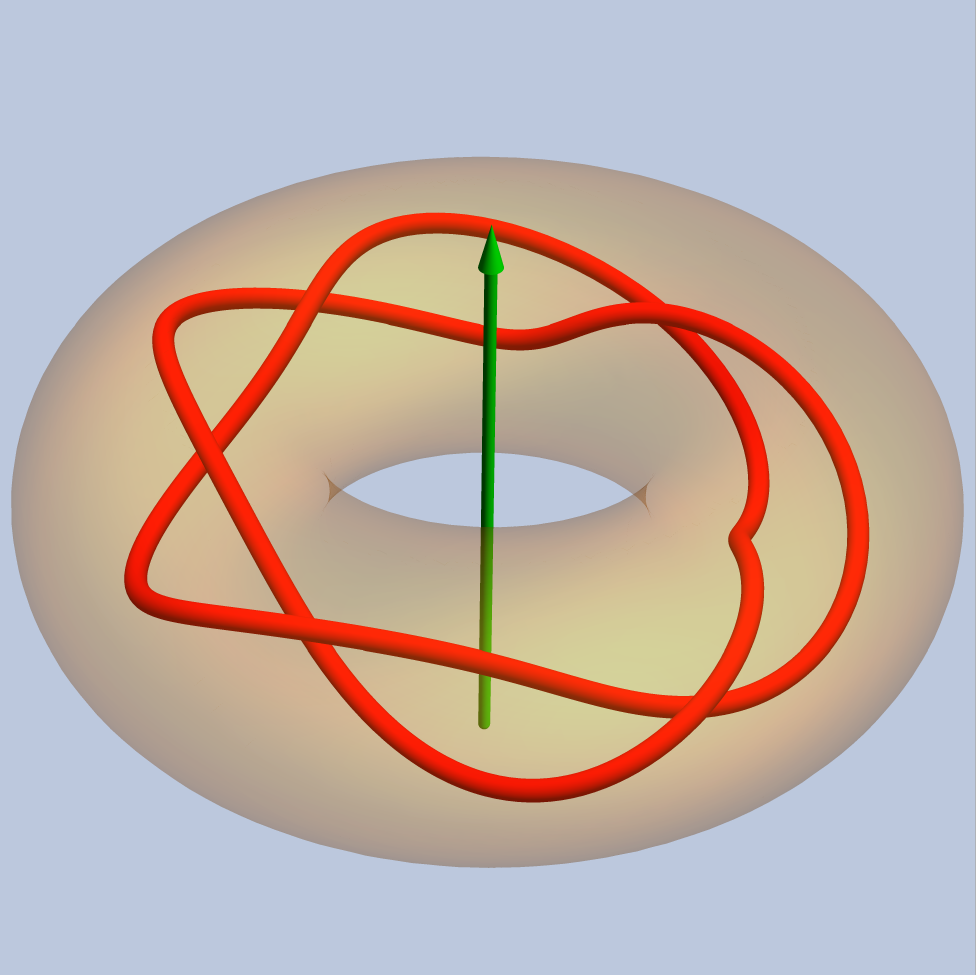}\quad\quad\quad
			\includegraphics[height=4cm,width=4cm]{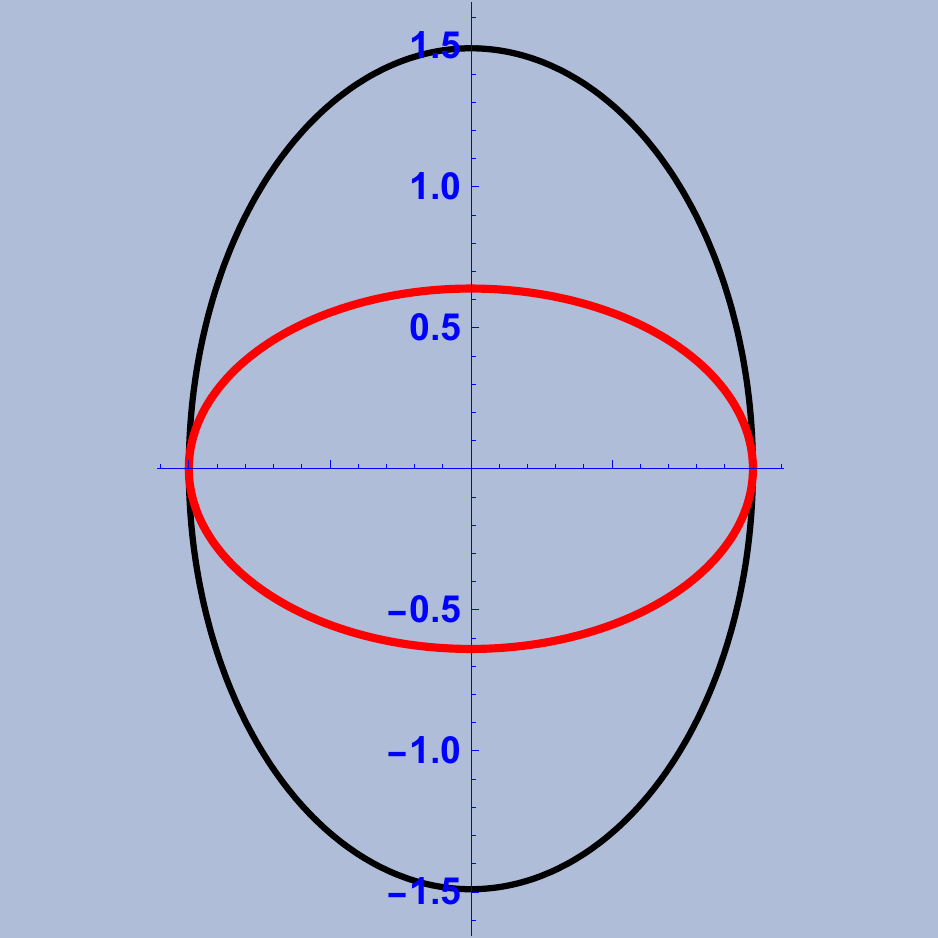}
		}
		\caption{\small{Left: The closed null curve $\gamma_{7,3}$ in the torical model for $\AdS$. Right: The pair of star-shaped cousins associated with $\gamma_{7,3}$.}} \label{F1}
	\end{center}
\end{figure}

\begin{figure}[h]
\begin{center}
		\makebox[\textwidth][c]{
			\includegraphics[height=4cm,width=4cm]{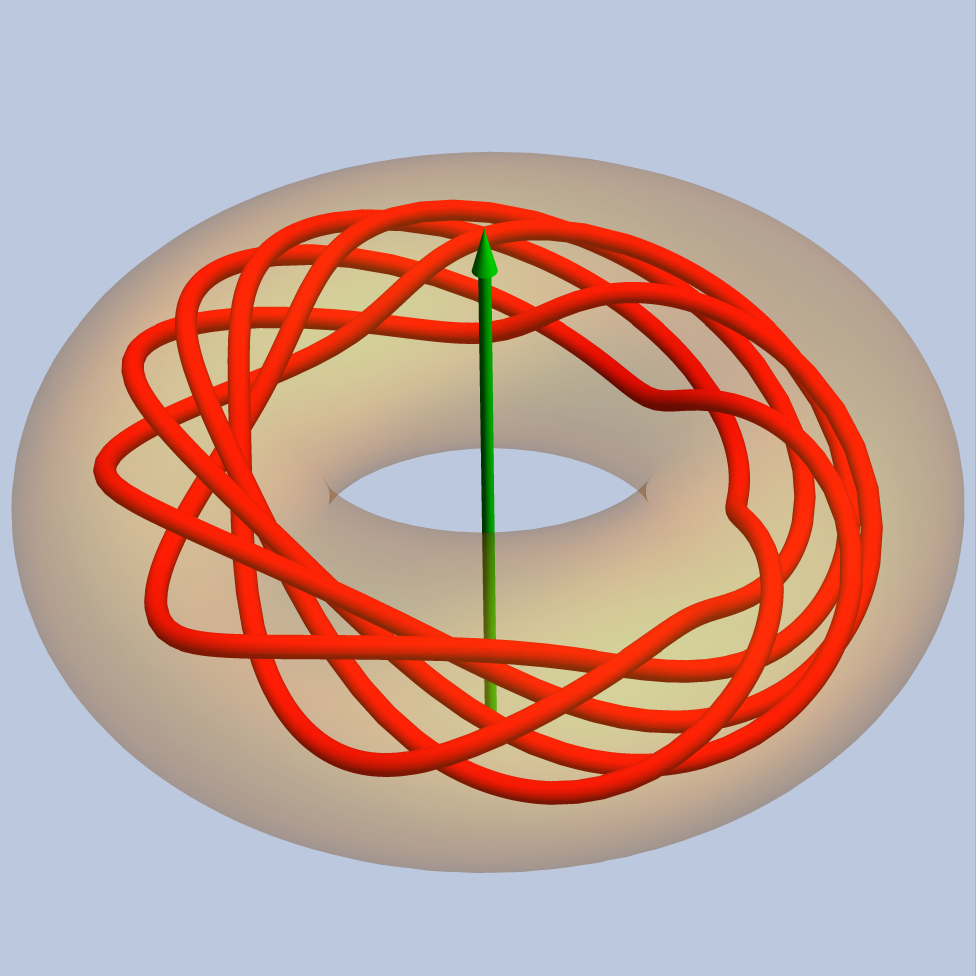}\quad\quad\quad
			\includegraphics[height=4cm,width=4cm]{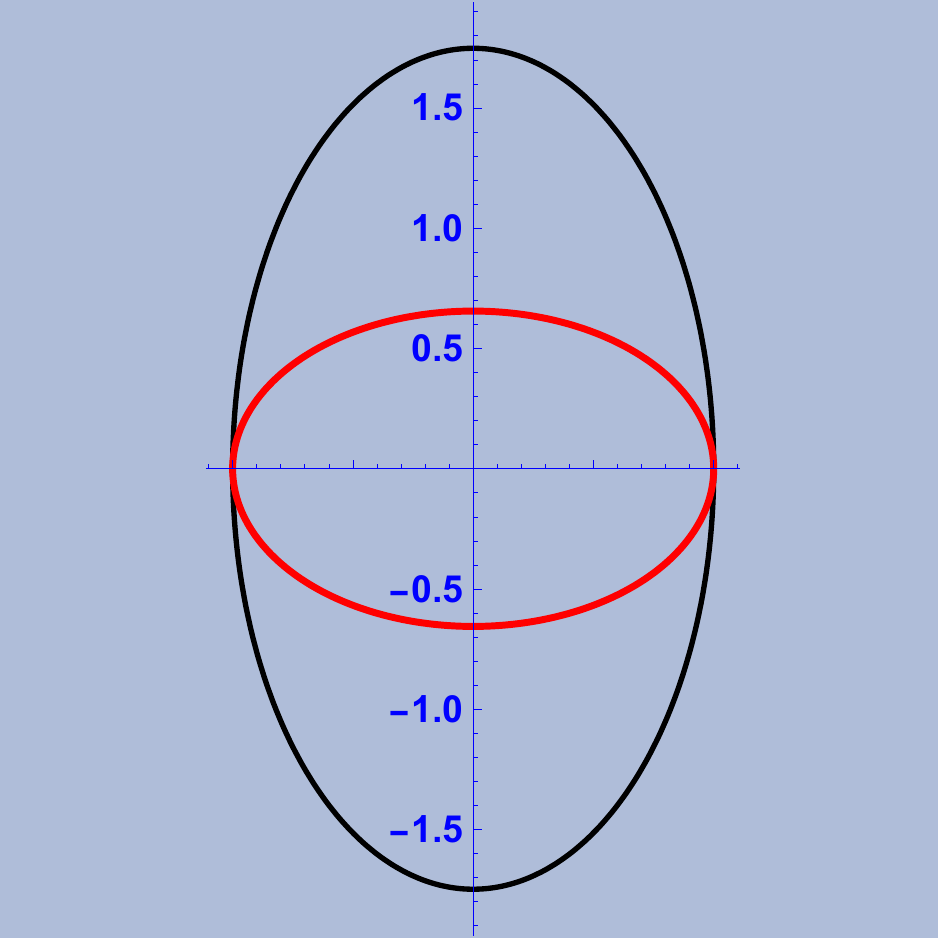}
		}
		\caption{\small{Left: The closed null curve $\gamma_{8,3}$ in the torical model for $\AdS$. Right: The pair of star-shaped cousins associated with 
		$\gamma_{8,3}$.}} \label{F2}
	\end{center}
\end{figure}

\section{Integrable Flows on Null Curves}

In this section we will introduce the LIEN flow on null curves in $\AdS$ and prove that the induced evolution equation on the bending is the KdV equation. Moreover, we will also show how to construct the solutions of the flow beginning with solutions of the KdV equation (Subsection 4.2). For the latter purpose, we will employ the Lax pair formulation of the KdV given by Zakharov and Faddeev (\cite{ZF}), hence, we begin by recalling this formulation (Subsection 4.1).

\subsection{The Lax Pair Formulation of the KdV and Extended Frames}

Given a function $\bending:(s,t)\in U\subseteq\R^2\longmapsto \bending(s,t)\in\R$ defined on a simply-connected open domain $U\subseteq\R^2$ and a constant $\lambda\in\R$, consider the $\mathfrak{sl}(2,\R)$-valued $1$-form\footnote{The $\mathfrak{sl}(2,\R)$-valued $1$-form $\Gamma_\lambda$ can be understood as the Lax connection, while the Maurer-Cartan compatibility equation is the zero-curvature equation for this connection.}
\begin{equation}\label{alpha}
	\Gamma_{\lambda}=\mathcal{K}_\lambda\,ds+\mathcal{P}_\lambda\,dt\,,
\end{equation}
where $\mathcal{K}_\lambda$ and $\mathcal{P}_\lambda$ are given by
\begin{equation}\label{KP}
	\mathcal{K}_\lambda=\begin{pmatrix} 0 & \bending+\lambda \\ 1 & 0\end{pmatrix}, \quad\quad \mathcal{P}_\lambda=\begin{pmatrix} -\partial_s\bending & -\partial_s^2\bending+2\bending^2-2\lambda\bending-4\lambda^2 \\ 2\bending-4\lambda & \partial_s\bending \end{pmatrix}.
\end{equation}
It is then a straightforward computation to check that the $1$-form $\Gamma_{\lambda}$ satisfies the Maurer-Cartan compatibility equation $d\Gamma_{\lambda}+\Gamma_{\lambda}\wedge\Gamma_{\lambda}=0$, or equivalently
$$\partial_t\mathcal{K}_\lambda-\partial_s\mathcal{P}_\lambda-\left[\mathcal{K}_\lambda,\mathcal{P}_\lambda\right]=0\,,$$
if and only if the function $\bending(s,t)$ satisfies the KdV equation
\begin{equation}\label{KdV2}
	\partial_t\bending+\partial_s^3\bending-6\bending\,\partial_s\bending=0\,.
\end{equation}
Consequently, as shown in \cite{ZF}, for every $\lambda\in\R$ there exists a map $E_\lambda:U\subseteq\R^2\longrightarrow\SL$ such that
\begin{equation}\label{dE}
	dE_\lambda=E_\lambda\,\Gamma_{\lambda}\,.
\end{equation}
The maps $E_\lambda$ depend in a real analytic fashion on $\lambda\in\R$ and are uniquely defined up to a left multiplication by an element of $\SL$. The map $E_\lambda$ is called an \emph{extended frame} of $\bending$ with spectral parameter $\lambda\in\R$ (\cite{TU})\footnote{Observe that in the paper \cite{TU}, the spectral parameter $\lambda$ is a complex number, while here we are restricting it to real values.}.

\begin{remark}\label{Pinkall} \emph{Assume that $U=J\times I$, where $J$ and $I$ are open intervals of the real line $\R$, and consider $\lambda=0$. Denote by $\eta$ the first column vector of the extended frame $E_0$. Then, $\eta$ satisfies 
		$$\partial_t\eta=-\partial_s\bending\,\eta+2\bending\,\partial_s\eta\,,$$
where $\bending=\curvature$ is the central affine curvature of $\eta$. Hence, $\eta$ is an integral curve of the Pinkall flows (\cite{P}).}
\end{remark}

\subsection{The LIEN Flow}

Let $J,I\subseteq\R$ be two open intervals such that (for convenience) $0\in I$ and consider $\gamma:(s,t)\in J\times I\subseteq\R^2\longmapsto \gamma(s,t)=\gamma_t(s)\in\AdS$ a smooth one parameter family of null curves without inflection points and parameterized by the proper time. In other words, for each $t\in I$, we have a null curve $\gamma_t:J\subseteq\R\longrightarrow\AdS$ satisfying the assumptions of Remark \ref{assumptions}.

The LIEN flow is the evolution equation for null curves in $\AdS$ given by
\begin{equation}\label{LIEN2}
	\partial_t\gamma=-2\sqrt{2}\,\left(\bending\, T+2 B\right),
\end{equation}
where $\bending=\bending(s,t)$ is the bending of $\gamma(s,t)$ and $\mathcal{F}=\{\gamma,T,N,B\}$ is the Cartan frame field along $\gamma(s,t)$ defined in Section 3. That is, $T(s,t)$ is the tangent vector field along $\gamma(s,t)$, while $B(s,t)$ is the binormal vector field.

We next show that the induced evolution equation on the bending $\bending$ of $\gamma$ is the KdV equation \eqref{KdV2}. In addition, we give a construction procedure to obtain solutions of the LIEN flow \eqref{LIEN2} beginning with solutions of \eqref{KdV2}.

\begin{thm}\label{induced}
	Let $\gamma:J\times I\subseteq\R^2\longrightarrow\AdS$ be a solution of the LIEN flow \eqref{LIEN2}. Then the bending $\bending(s,t)$ of $\gamma(s,t)$ evolves according to the KdV equation \eqref{KdV2}. 
	
	Conversely, if $\bending:J\times I\subseteq\R^2\longrightarrow\R$ is a smooth solution of the KdV equation \eqref{KdV2}, then 
	\begin{equation*}\label{gamma}
		\gamma=E_1E_{-1}^{-1}:J\times I\subseteq\R^2\longrightarrow\AdS\,,
	\end{equation*}
is a solution of the LIEN flow \eqref{LIEN2} with bending $\bending$, where $E_\lambda$, $\lambda=-1,1$, are the extended frames of $\bending$ with spectral parameters $\lambda=-1,1$, respectively.
\end{thm}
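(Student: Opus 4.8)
The plan is to prove both implications by lifting everything to the spinor frame field and combining the zero-curvature characterization of the KdV recalled in Subsection 4.1 with the correspondence of Theorem \ref{relation}. For the direct implication, let $\gamma$ be a solution of \eqref{LIEN2}; by hypothesis each $\gamma_t$ is null, parameterized by its proper time, and without inflection points. Since $J\times I$ is simply connected and $\pi_s\colon\SL\times\SL\to\Cartan$ is a covering map, the Cartan frame field of the family lifts to a smooth map $(F_+,F_-)\colon J\times I\to\SL\times\SL$ which, for each frozen $t$, is a spinor frame field of $\gamma_t$; hence $\partial_sF_\pm=F_\pm\mathcal{K}_{\pm1}$ by \eqref{dF+}--\eqref{dF-}, with $\mathcal{K}_\lambda$ as in \eqref{KP}. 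Writing $\partial_tF_\pm=F_\pm\mathcal{Q}_\pm$ with $\mathcal{Q}_\pm$ valued in $\mathfrak{sl}(2,\R)$ (traceless, since $\det F_\pm=1$), the goal is to show $\mathcal{Q}_\pm=\mathcal{P}_{\pm1}$, with $\mathcal{P}_\lambda$ the matrix of \eqref{KP} built from the bending $\bending$ of $\gamma$.

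To identify $\mathcal{Q}_\pm$ I would use $\gamma=F_+F_-^{-1}$, $T=F_+P_2F_-^{-1}$, $N=F_+P_3F_-^{-1}$, $B=F_+P_4F_-^{-1}$, which give $\partial_t\gamma=F_+(\mathcal{Q}_+P_1-P_1\mathcal{Q}_-)F_-^{-1}$, $\partial_tT=F_+(\mathcal{Q}_+P_2-P_2\mathcal{Q}_-)F_-^{-1}$ and $\partial_tN=F_+(\mathcal{Q}_+P_3-P_3\mathcal{Q}_-)F_-^{-1}$, and compare these with the independent expressions coming from the flow: $\partial_t\gamma$ is prescribed by \eqref{LIEN2}, while $\partial_tT=\tfrac{1}{\sqrt{2}}\partial_s\partial_t\gamma$ and $\partial_tN=\tfrac{1}{\sqrt{2}}\partial_s\partial_tT$ are expanded in the Cartan basis $\{\gamma,T,N,B\}$ by repeated use of the Frenet-type equations \eqref{dF}--\eqref{K}. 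The comparison for $\partial_t\gamma$ and $\partial_tT$ pins down the diagonal entries, the lower off-diagonal entries, and the difference of the upper off-diagonal entries of $(\mathcal{Q}_+,\mathcal{Q}_-)$; the comparison for $\partial_tN$ supplies the sum of the upper off-diagonal entries. Solving, one gets $\mathcal{Q}_\pm=\mathcal{P}_{\pm1}$ exactly. Then $\partial_sF_\pm=F_\pm\mathcal{K}_{\pm1}$ and $\partial_tF_\pm=F_\pm\mathcal{P}_{\pm1}$, so the identity $\partial_s\partial_tF_\pm=\partial_t\partial_sF_\pm$---automatic by smoothness---is the Maurer-Cartan equation $d\Gamma_{\pm1}+\Gamma_{\pm1}\wedge\Gamma_{\pm1}=0$ for $\Gamma_{\pm1}=\mathcal{K}_{\pm1}\,ds+\mathcal{P}_{\pm1}\,dt$, which by Subsection 4.1 (with $\lambda=\pm1$) holds if and only if $\bending$ satisfies the KdV equation \eqref{KdV2}.

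For the converse, let $\bending$ solve \eqref{KdV2} and let $E_1,E_{-1}$ be its extended frames with spectral parameters $1$ and $-1$, so $\partial_sE_{\pm1}=E_{\pm1}\mathcal{K}_{\pm1}$ and $\partial_tE_{\pm1}=E_{\pm1}\mathcal{P}_{\pm1}$. The first column $\eta_\pm$ of $E_{\pm1}$ satisfies $\eta_\pm''=(\bending\pm1)\eta_\pm$ and $\det(\eta_\pm,\eta_\pm')=\det E_{\pm1}=1$, so it is a star-shaped curve parameterized by its central affine arc length with central affine curvature $\curvature_\pm=\bending\pm1$, and $E_{\pm1}=(\eta_\pm,\eta_\pm')$ is its canonical central affine frame field. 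Since $(\curvature_+-\curvature_-)/2=1$, the pair $(\eta_+,\eta_-)$ is a pair of cousins, so by Theorem \ref{relation} the map $\gamma=E_1E_{-1}^{-1}$ is, for each frozen $t$, a null curve in $\AdS$ with bending $(\curvature_++\curvature_-)/2=\bending$ and spinor frame field $(E_1,E_{-1})$, whence $T=E_1P_2E_{-1}^{-1}$ and $B=E_1P_4E_{-1}^{-1}$. Differentiating $\gamma=E_1E_{-1}^{-1}$ in $t$ gives $\partial_t\gamma=E_1(\mathcal{P}_1-\mathcal{P}_{-1})E_{-1}^{-1}$, and \eqref{KP} yields $\mathcal{P}_1-\mathcal{P}_{-1}=-2\sqrt{2}\,\bending\,P_2-4\sqrt{2}\,P_4$ with $P_2,P_4$ as in \eqref{bases}; hence $\partial_t\gamma=-2\sqrt{2}(\bending\,T+2B)$, i.e. $\gamma$ solves the LIEN flow \eqref{LIEN2} with bending $\bending$.

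Apart from routine matrix algebra, the real obstacle is the bookkeeping in the second paragraph: because $B=\tfrac{1}{\sqrt{2}}\bending\,\gamma'-\tfrac{1}{2\sqrt{2}}\gamma'''$ involves third-order $s$-derivatives, recovering the full $\mathcal{Q}_\pm$---in particular its upper off-diagonal entry $-\partial_s^2\bending+2\bending^2\mp2\bending-4$---forces one to differentiate \eqref{LIEN2} twice in $s$ and rewrite the result in the Cartan basis via \eqref{dF}--\eqref{K}, and this is precisely the step where the nonlinearity $\bending^2$ of the KdV equation is produced.
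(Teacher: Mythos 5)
Your proposal is correct and follows essentially the same route as the paper: both directions rest on the zero-curvature characterization of Subsection 4.1 with spectral parameters $\lambda=\pm 1$ together with the spinor-frame correspondence of Theorem \ref{relation}, and your converse computation $\mathcal{P}_1-\mathcal{P}_{-1}=-2\sqrt{2}\,\bending\,P_2-4\sqrt{2}\,P_4$ is exactly the paper's pullback computation of $\mathcal{F}^*(\omega_1^2)$, $\mathcal{F}^*(\omega_1^3)$, $\mathcal{F}^*(\omega_1^4)$ rewritten at the $2\times 2$ level. The only difference is cosmetic: in the direct implication the paper stays with the $4\times 4$ Cartan frame and fixes the unspecified entries $p_{22},p_{23},p_{32}$ of $\mathcal{P}$ through the compatibility equations, whereas you lift to the spinor frame and fix the entries of $\mathcal{Q}_\pm$ by differentiating the flow in $s$ and expanding via the Frenet-type equations; both yield $\mathcal{Q}_\pm=\mathcal{P}_{\pm 1}$ (equivalently the values \eqref{coef}) and the same conclusion.
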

\begin{proof}
Suppose that $\gamma$ is a solution of the LIEN flow. Then, from \eqref{LIEN2} and the Frenet-type equations \eqref{dF}, the Cartan frame field $\mathcal{F}$ along $\gamma$ is a solution of the linear system 
\begin{equation}\label{LS}\mathcal{F}^{-1}d\mathcal{F}=\mathcal{K}\,ds+\mathcal{P}\,dt\,,\end{equation}
where $\mathcal{K}$ is as in \eqref{K} and 
\begin{equation}\label{P}
	{\mathcal P}=\begin{pmatrix} 0 &  -4\sqrt{2}  & 0 &	-2\sqrt{2}\,\kappa \\ 
			-2\sqrt{2}\,\kappa & p_{22} &p_{23} & 0 \\ 0 &p_{32} & 0 & -p_{23} \\ -4\sqrt{2} & 0 & -p_{32}& -p_{22} \end{pmatrix}. 
\end{equation}
Notice that the first column and first row of the matrix $\mathcal{P}$ are a consequence of the evolution equation \eqref{LIEN2}, while the unspecified entries $p_{ij}$ are completely determined by the compatibility equations
\begin{equation}\label{compatibility}
	\partial_t\mathcal{K}-\partial_s\mathcal{P}-\left[\mathcal{K},\mathcal{P}\right]=0\,.
\end{equation}
As a result we obtain the specific values
\begin{equation}\label{coef}
	p_{22}=-2\partial_s\kappa\,,\quad\quad\quad p_{32}=\frac{4}{\sqrt{2}}\,\kappa\,,\quad\quad\quad p_{23}=\frac{1}{\sqrt{2}}\left(-2\partial_{s}^2\kappa+4\kappa^2-8\right).
\end{equation}
Let $p_{22},p_{32}$ and $p_{23}$ be as above and $\mathcal{P}$ be the matrix \eqref{P}, then the compatibility equation \eqref{compatibility} is satisfied if and only if $\kappa$ is a solution of the KdV equation \eqref{KdV2}. 

Conversely, let $\bending$ be a solution of the KdV equation \eqref{KdV2}. According to \cite{ZF}, the extended frames of $\bending$ with spectral parameters $\lambda=-1$ and $\lambda=1$, $E_{-1}$ and $E_1$, respectively, do exist and they satisfy \eqref{dE}. We then define the map $\gamma=E_1 E_{-1}^{-1}$. 

For every fixed $t\in I$, it follows from \eqref{dE} and the spinorial Frenet-type equations \eqref{dF+}-\eqref{dF-} that $\gamma_t:J\subseteq\R\longrightarrow\AdS$ is a null curve without inflection points, parameterized by the proper time and with bending $\bending_t$. Moreover, $\left(E_1,E_{-1}\right)$ is the spinor frame field along $\gamma_t$. Therefore, the Cartan frame field $\mathcal{F}$ along $\gamma$ is $\mathcal{F}=\pi_s\circ (E_1,E_{-1})$. Then, using the expression of $\pi_s^*(\omega)$ given in Section 2, we deduce that
\begin{eqnarray*}
	\mathcal{F}^*\left(\omega_1^2\right)&=&\sqrt{2}\left(ds-2\bending dt\right),\\
	\mathcal{F}^*\left(\omega_1^3\right)&=&0\,,\\
	\mathcal{F}^*\left(\omega_1^4\right)&=&-4\sqrt{2}\,dt\,.
\end{eqnarray*}
(Observe that the values $\alpha_i^j$ of $\pi_s^*(\omega)$ correspond to the values of $\Gamma_{1}$, while the $\beta_i^j$ are those of $\Gamma_{-1}$.)

It then follows from
$$d\gamma=\mathcal{F}^*(\omega_1^2)T+\mathcal{F}^*(\omega_1^3)N+\mathcal{F}^*(\omega_1^4)B\,,$$
that \eqref{LIEN2} holds and, hence, $\gamma$ is a solution of the LIEN flow.
\end{proof}

\begin{remark} \emph{Let $\bending:J\times I\subseteq\R^2\longrightarrow\R$ be a smooth solution of the KdV equation \eqref{KdV2}. Employing the extended frames $E_\lambda$ and the construction of null curves of Theorem \ref{relation}, we find a relation between solutions of the LIEN flow \eqref{LIEN2} and solutions of the Pinkall flows. Indeed, as explained in Remark \ref{Pinkall}, the first column vector of $E_0$ is an integral curve $\eta$ of the Pinkall flows, while the null curve $\gamma=E_1E_{-1}^{-1}$ is a solution of the LIEN flow \eqref{LIEN2}. The solution $\bending$ of the KdV equation is the central affine curvature of $\eta$ as well as the bending of $\gamma$.}
\end{remark}

Proceeding in analogy with \cite{TW1} we next prove that closed null curves evolve by the LIEN flow \eqref{LIEN2} through closed curves, assuming the extra condition that the corresponding solution of the KdV equation \eqref{KdV2} is periodic.

\begin{prop}\label{CauchyP} Let $\gamma_o: J=\R\longrightarrow\AdS$ be a closed null curve with bending $\bending_o(s)$. Assume that the solution $\bending(s,t)$, $t\in I$, of the KdV equation \eqref{KdV2} with initial condition $\bending(s,0)=\bending_o(s)$ is periodic in $s\in \R$. Then, the solution $\gamma:\R\times I\subseteq\R^2\longrightarrow\AdS$ of the LIEN flow \eqref{LIEN2} with initial condition $\gamma(s,0)=\gamma_o(s)$ is periodic in $s\in \R$.
\end{prop}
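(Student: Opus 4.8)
The plan is to realize the solution of the LIEN Cauchy problem explicitly through extended frames and then to observe that its monodromy in the proper-time variable is preserved along the flow. Fix a period $\rho>0$ of the closed curve $\gamma_o$. Because the Cartan frame $\mathcal{F}_o=\{\gamma_o,T,N,B\}$ is built algebraically from $\gamma_o$ and its $s$-derivatives, $\gamma_o(s+\rho)=\gamma_o(s)$ forces $\mathcal{F}_o$ to be $\rho$-periodic; lifting along the $2{:}1$ spin-covering $\pi_s$ and using that $\ker\widehat{L}=\{(\mathrm{Id},\mathrm{Id}),(-\mathrm{Id},-\mathrm{Id})\}$, the spinor frame field $(F_+,F_-)$ of $\gamma_o$ satisfies $F_\pm(s+\rho)=\varepsilon\,F_\pm(s)$ for a single sign $\varepsilon\in\{\pm1\}$, common to both components and constant in $s$ by continuity. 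In particular $\bending_o$ is $\rho$-periodic.

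Next I would pin down the LIEN solution. The extended frames $E_{1},E_{-1}$ of the KdV solution $\bending(s,t)$ are unique up to left multiplication by constant elements of $\SL$; using this freedom I normalize them so that $E_1(\cdot,0)$ and $E_{-1}(\cdot,0)$ solve, respectively, the same $s$-systems \eqref{dF+} and \eqref{dF-} as $F_+$ and $F_-$, with the same value at $s=0$, whence $E_1(s,0)=F_+(s)$ and $E_{-1}(s,0)=F_-(s)$. By Theorem \ref{induced}, $\gamma:=E_1E_{-1}^{-1}$ is a solution of the LIEN flow \eqref{LIEN2} with bending $\bending$, and by Theorem \ref{relation} it satisfies $\gamma(s,0)=F_+(s)F_-(s)^{-1}=\gamma_o(s)$. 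Since the LIEN Cauchy problem has a unique solution — the bending evolves by the KdV equation starting from $\bending_o$, and then the Cartan frame is determined by $\mathcal{F}_o(0)$ through the linear system \eqref{LS} — this $\gamma$ is the solution appearing in the statement.

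The heart of the matter is that the $s$-monodromy of the extended frames does not depend on $t$. Since the KdV equation \eqref{KdV2} is invariant under translations in $s$ and has a unique solution for given data, $\bending(s+\rho,t)$ and $\bending(s,t)$ solve the same equation with the same initial datum $\bending_o$, so $\bending$ is $\rho$-periodic in $s$ for every $t$; hence the Lax connections $\Gamma_{\pm1}$ are $\rho$-periodic in $s$. Therefore $(s,t)\mapsto E_{\pm1}(s+\rho,t)$ solves the same system $dE=E\,\Gamma_{\pm1}$ on the simply connected domain $\R\times I$, and thus equals $M_\pm\,E_{\pm1}$ for a constant $M_\pm\in\SL$; evaluating at $(s,t)=(0,0)$ gives $M_\pm=F_\pm(\rho)F_\pm(0)^{-1}=\varepsilon\,\mathrm{Id}$. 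Hence $E_{\pm1}(s+\rho,t)=\varepsilon\,E_{\pm1}(s,t)$ for all $(s,t)$, so that
\[
\gamma(s+\rho,t)=\bigl(\varepsilon E_1(s,t)\bigr)\bigl(\varepsilon E_{-1}(s,t)\bigr)^{-1}=\gamma(s,t),
\]
i.e. $\gamma(\cdot,t)$ is $\rho$-periodic for every $t\in I$. I expect the only delicate points to be the bookkeeping in the first two steps — the normalization of the extended frames and the uniqueness of the LIEN Cauchy problem — while the last step is the short, standard observation (in the spirit of \cite{TW1}) that the monodromy is a conserved quantity of the flow.
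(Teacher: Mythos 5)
Your proof is correct, and it rests on the same underlying mechanism as the paper's --- the LIEN flow preserves the monodromy in $s$ --- but you implement it differently. The paper argues directly at the level of the Cartan frame: with $\rho$ the least period of the bending it differentiates $M(t)=\mathcal{F}(\rho,t)\mathcal{F}(0,t)^{-1}$ in $t$, uses $\mathcal{P}(\rho,t)=\mathcal{P}(0,t)$ to get $\partial_t M=0$, and then needs the extra observation that closedness of $\gamma_o$ forces $M$ to have finite order $n$, whence every $\gamma_t$ closes up with period $n\rho$. You instead take $\rho$ to be a period of the curve itself, pass to the spinor/extended-frame level, and obtain $E_{\pm1}(s+\rho,t)=\varepsilon\,E_{\pm1}(s,t)$ from the constancy of $E_{\pm1}(\cdot+\rho,\cdot)\,E_{\pm1}^{-1}$ on the simply connected domain; this makes the monodromy $\pm\mathrm{Id}$ from the start, avoids the finite-order step, and gives the slightly sharper statement that every period of $\gamma_o$ remains a period of $\gamma_t$. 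The price is the bookkeeping you yourself flag: you must normalize $E_{\pm1}(\cdot,0)=F_{\pm}$ and identify your $E_1E_{-1}^{-1}$ with ``the'' solution of the statement, which requires uniqueness of the KdV Cauchy problem for smooth spatially periodic data (and hence of the LIEN Cauchy problem); the paper sidesteps this by working with the given solution's Cartan frame, although it too tacitly uses that $\bending(\cdot,t)$ has period $\rho$ for every $t$ when asserting $\mathcal{P}(\rho,t)=\mathcal{P}(0,t)$, which is exactly the translation-plus-uniqueness point you make explicit. Both arguments are at a comparable level of rigor, and yours matches the paper's own remark that the proposition is a consequence of monodromy preservation.
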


\begin{proof} Let $\gamma_o$ be a closed null curve and assume that its periodic bending $\bending_o$ has least period $\rho$. Recall that the Cartan frame field ${\mathcal F}$ along $\gamma$ is a solution of the linear system (\ref{LS}), where $\mathcal{K}$ is as in \eqref{K} and ${\mathcal P}$ is as in  \eqref{P} for the entries given in \eqref{coef}. To prove that the evolving curves $\gamma_t:s\in\R\longmapsto\gamma(s,t)\in\AdS$ are periodic in $s\in\R$ it suffices to show that the monodromy $M: t\in I\longmapsto {\mathcal F}(\rho,t){\mathcal F}(0,t)^{-1}\in\R$ is constant. Indeed, if $M$ is constant and $\gamma_o$ is periodic in $s\in\R$, then $M$ has finite order $n\ge 0$ and $\gamma_{o}$ is periodic with least period $n\rho$.  The evolving curves $\gamma_t(s)=\gamma(s,t)$ have periodic bending of period $\rho$ and monodromies of order $n$. Consequently, $\gamma_t$ is periodic with period $n\rho$, for every $t\in I$. 
	
Differentiating $M$ with respect to $t$ we have
$$\partial_t M(t) = \partial_t{\mathcal F}(\rho,t) {\mathcal F}(0,t)^{-1}+{\mathcal F}(\rho,t)\partial_t{\mathcal F}^{-1}(0,t)\,.$$
From (\ref{LS}) we have
$$\partial_t{\mathcal F}(\rho,t)={\mathcal F}(\rho,t){\mathcal P}(\rho,t)\,,\quad\quad\quad \partial_t{\mathcal F}^{-1}(0,t) =
 -{\mathcal P}(0,t){\mathcal F}^{-1}(0,t)\,.$$
Since $\kappa(s,t)$ is periodic in $s\in\R$, we have ${\mathcal P}(\rho,t)={\mathcal P}(0,t)$ and, hence,
$$\partial_t M(t) =  {\mathcal F}(\rho,t) {\mathcal P}(\rho,t) {\mathcal F}(0,t)^{-1} - 
{\mathcal F}(\rho,t) {\mathcal P}(0,t) {\mathcal F}(0,t)^{-1}=0\,,
$$
proving the result.
\end{proof}

\begin{remark} \emph{The above proposition is a consequence of the fact that the LIEN flow \eqref{LIEN2} preserves the monodromy.}
\end{remark}

\begin{remark} \emph{The Cauchy problem for the KdV equation \eqref{KdV2} with initial data belonging to the Schwartz class ${\mathcal S}(\R,\R)$ was solved by Lax in \cite{L0}. From this it follows that a null curve whose bending $\bending$ belongs to this class, i.e. $\bending \in {\mathcal S}(\R,\R)$, evolves under the LIEN flow \eqref{LIEN2} and the bendings of the evolving curves belong to ${\mathcal S}(\R,\R)$ as well.}
\end{remark}

\section{Stationary Solutions}

Among all the possible solutions of the LIEN flow \eqref{LIEN2}, stationary solutions deserve special attention due to their interesting geometric meaning. From a geometric point of view the stationary solutions are the fixed points of the flow on the space of the equivalence classes of null curves. In this section we will focus on stationary solutions of the flow and prove that those with nonconstant periodic bending can be explicitly integrated employing the fundamental solutions of a Lam\'e equation (Subsections 5.2 and 5.3). To show this, we will first recall the Floquet eigenvalue problem for this equation (\cite{V}) which plays an essential role in the proof (Subsection 5.1).

\subsection{Floquet Spectrum for the First Order Lam\'e Equation}

The first order \emph{Lam\'e equation} is the second order ordinary differential equation
\begin{equation}\label{Lame}
	f''+\left(h-2\,\mu\,{\rm sn}^2(-,\mu)\right)f=0\,,
\end{equation}
where ${\rm sn}(-,\mu)$ is the Jacobi's ${\rm sn}$ function with \emph{elliptic parameter}\footnote{Our elliptic parameter $\mu\in(0,1)$ is the square of the elliptic modulus.} $\mu\in(0,1)$ and $h\in\R$ is a constant called the \emph{eigenvalue parameter}. Observe that ${\rm sn}^2(-,\mu)$ is an even periodic function with least period $2K(\mu)$. Here, $K$ denotes the complete elliptic integral of the first kind.  We denote by ${\rm cl}_{h,\mu}$ and ${\rm sl}_{h,\mu}$ the fundamental solutions of the equation \eqref{Lame} normalized by ${\rm cl}_{h,\mu}(0)=1$, ${\rm cl}_{h,\mu}'(0)=0$, ${\rm sl}_{h,\mu}(0)=0$, and ${\rm sl}_{h,\mu}'(0)=1$. It is clear that since ${\rm sn}^2(-,\mu)$ is an even function, ${\rm cl}_{h,\mu}$ is even while ${\rm sl}_{h,\mu}$ is odd.  For more details about elliptic integrals and Jacobian elliptic functions, we refer the reader to \cite{RW}.

\begin{defn}
	The eigenvalue parameter $h\in\R$ is a \emph{Floquet eigenvalue} (with elliptic parameter $\mu$) if both fundamental solutions ${\rm cl}_{h,\mu}$ and ${\rm sl}_{h,\mu}$ are periodic. The \emph{Floquet spectrum} with elliptic parameter $\mu$, denoted by $\mathcal{S}_\mu$, is the set of all Floquet eigenvalues with elliptic parameter $\mu$.
\end{defn}

From the Lam\'e equation \eqref{Lame} it follows that the function $\delta_{h,\mu}:\R\longrightarrow\SL$ defined by
\begin{equation}\label{beta}
	\delta_{h,\mu}(s)=\begin{pmatrix} {\rm cl}_{h,\mu}(s) & {\rm cl}_{h,\mu}'(s) \\ {\rm sl}_{h,\mu}(s) & {\rm sl}_{h,\mu}'(s) \end{pmatrix},
\end{equation}
satisfies the Cauchy problem
\begin{equation}\label{Cauchy}
	\delta_{h,\mu}'(s)=\delta_{h,\mu}(s)\begin{pmatrix} 0 & 2\mu\,{\rm sn}^2(s,\mu)-h \\1 & 0 \end{pmatrix},\quad\quad\quad \delta_{h,\mu}(0)={\rm Id}\,.
\end{equation}
The \emph{monodromy} is the matrix $M_\mu(h)=\delta_{h,\mu}(2K(\mu))\in\SL$. From \eqref{Cauchy}, it follows that $h$ is a Floquet eigenvalue if and only if the monodromy has finite order. Equivalently, if and only if the matrix $M_\mu(h)$ is diagonalizable\footnote{Observe that when $q\in(0,1)$ the monodromy $M_\mu(h)$ is always diagonalizable, since in these cases the eigenvalues are different.} over $\mathbb{C}$ with eigenvalues $e^{\pm i\pi q}$ where $q\in[0,1]\cap\mathbb{Q}$. The rational number $q$ is called the \emph{characteristic exponent} of $h$.

The set of all Floquet eigenvalues $h$ with characteristic exponent $q$ is denoted by $\mathcal{S}_{\mu,q}\subset\mathcal{S}_\mu$. The set $\mathcal{S}_{\mu,q}$ is an unbounded strictly increasing sequence (\cite{V}). In particular, $\mathcal{S}_{\mu,0}=\{b_1^{2r}(\mu)\}_{r\in\mathbb{N}}$ and $\mathcal{S}_{\mu,1}=\{b_1^{2r+1}(\mu)\}_{r\in\mathbb{N}}$, where $b_1^{2r}(\mu)$ and $b_1^{2r+1}(\mu)$ are the classical \emph{Lam\'e eigenvalues}\footnote{Due to their relevance in physics and applied mathematics, since 2020, the Lam\'e eigenvalues are implemented in the most common scientific computing software, such as \emph{Mathematica}, \emph{MatLab} or \emph{Maple}.} of order one (\cite{V0}). These eigenvalues interlace according to
$$1+\mu<b_1^{2r}(\mu)<b_1^{2r+1}(\mu)\,,$$
and $b_1^{r}(\mu)\to\infty$ as $r\to\infty$. If $q\in(0,1)$, then $\mathcal{S}_{\mu,q}=\{h\in(\mu,1)\cup(\mu+1,\infty)\,\lvert\,\tau_\mu(h)=\cos(q\pi)\}$, where $\tau_\mu$ is the real analytic function of the variable $h$ defined by half the trace of the monodromy, that is,
\begin{equation}\label{tau}
	\tau_\mu(h)=\frac{1}{2}{\rm tr}M_\mu(h)\,.
\end{equation}
In Figure \ref{trace} we illustrate the graph of a function $\tau_\mu(h)$. Assuming the corresponding restrictions, namely, $h>1+\mu$ when $q=0,1$, or $h\in(\mu,1)\cup (1+\mu,\infty)$ when $q\in(0,1)$, the values of $h$ at which $\tau_\mu(h)=\cos(q\pi)$, $q\in[0,1]$, holds are the Floquet eigenvalues with characteristic exponent $q$. One can also see from the figure that the sequence of these eigenvalues is strictly increasing and unbounded.

\begin{figure}[h]
	\begin{center}
		\makebox[\textwidth][c]{
			\includegraphics[height=4cm,width=6cm]{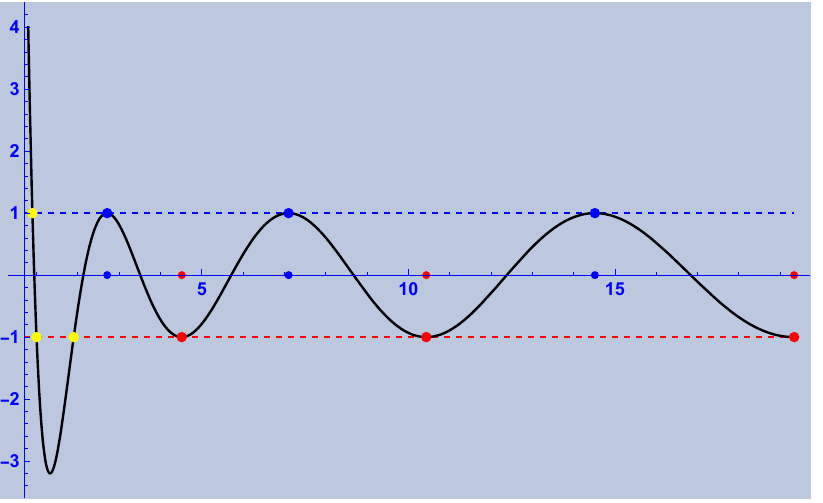}\quad\quad
			\includegraphics[height=4cm,width=6cm]{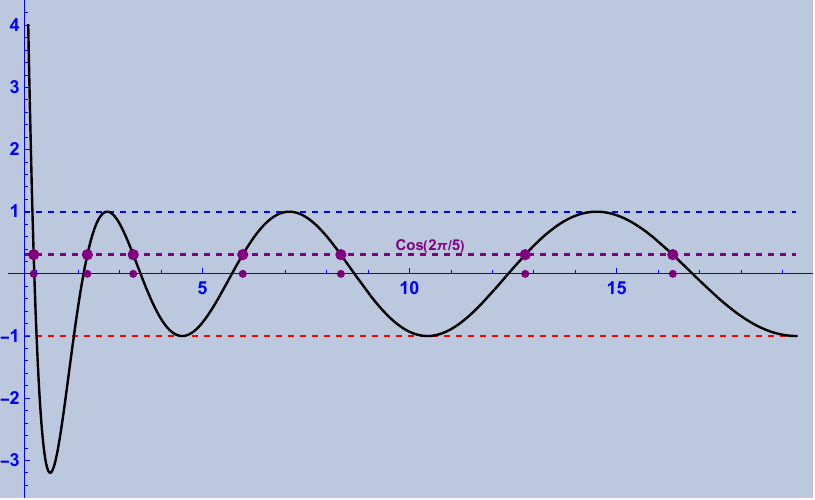}
		}
		\caption{\small{Graph of the function $\tau_\mu(h)$, \eqref{tau}, for the specific value $\mu=0.9$ (in black). Left: The first three Floquet eigenvalues with characteristic exponent $q=0$, that is, belonging to $\mathcal{S}_{\mu,0}\subset\mathcal{S}_\mu$ (the red points) and the first three Floquet eigenvalues with characteristic exponent $q=1$, which belong to $\mathcal{S}_{\mu,1}\subset\mathcal{S}_\mu$ (the blue points). Although the yellow points satisfy $\tau_\mu(h)=\cos(q\pi)$, $q=0,1$, their $h$ value does not belong to $\mathcal{S}_{\mu,q}$ since $h<1+\mu$. Right: The first seven Floquet eigenvalues with characteristic exponent $q=5/2$, that is, the first seven elements of $\mathcal{S}_{\mu,2/5}\subset\mathcal{S}_\mu$ (the purple points).}} \label{trace}
	\end{center}
\end{figure}

\subsection{Heun Functions and Periodic Solutions of the Lam\'e Equation}

In this subsection we will show how to build the fundamental solutions ${\rm cl}_{h,\mu}$ and ${\rm sl}_{h,\mu}$ of the Lam\'e equation \eqref{Lame} out of the Jacobi's ${\rm sn}$ function and a class of holomorphic functions, namely, the local Heun functions. For more details about Heun functions we refer the reader to \cite{Ro} and \cite{SK}.

The (local) \emph{Heun function} ${\mathcal H}\ell(a,q,\alpha,\beta,\gamma,\delta;z)$ with parameters $a,q,\alpha,\beta,\gamma$ and $\delta$ is the holomorphic solution of the second order ordinary differential equation
\begin{equation}\label{Heun}
	f''(z)+\left(\frac{\gamma}{z}+\frac{\delta}{z-1}+\frac{\alpha+\beta-\gamma-\delta+1}{z-a}\right)f'(z)+\frac{\alpha\beta z-q}{z(z-1)(z-a)}f(z)=0\,,
\end{equation}
with initial condition $f(0)=1$. In the present paper, we are interested in a couple of Heun functions determined by the parameters, respectively,
$$a=1/\mu\,,\quad q=(\mu-h)/4\mu\,,\quad \alpha=0\,,\quad  \beta=3/2\,,\quad  \gamma=\delta=1/2\,,$$
and 
$$a=1/\mu\,,\quad q=(1-h+4\mu)/4\mu\,,\quad \alpha=1/2\,,\quad  \beta=2\,,  \quad\gamma=3/2\,,\quad\delta=1/2\,,$$
where $\mu\in (0,1)$ is the elliptic parameter of the Jacobi's ${\rm sn}$ function and $h\in \mathcal{S}_{\mu}$ is a Floquet eigenvalue with elliptic parameter $\mu$. For short, we denote these Heun functions by $\mathcal{H}\ell_1(\mu,h;z)$ and by $\mathcal{H}\ell_2(\mu,h;z)$, respectively (an example is shown in Figures \ref{Heun1} and  \ref{Heun2}). 

\begin{figure}[h]
	\begin{center}
		\makebox[\textwidth][c]{
			\includegraphics[height=6cm,width=6cm]{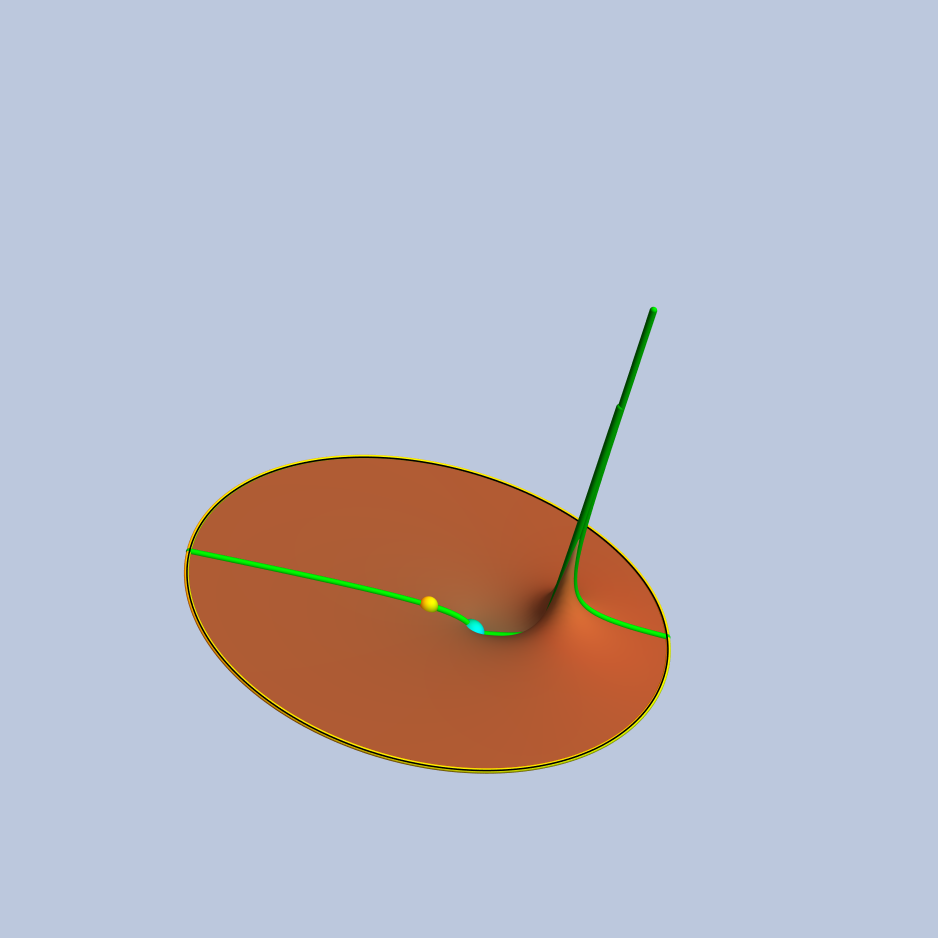}\quad\quad
			\includegraphics[height=6cm,width=6cm]{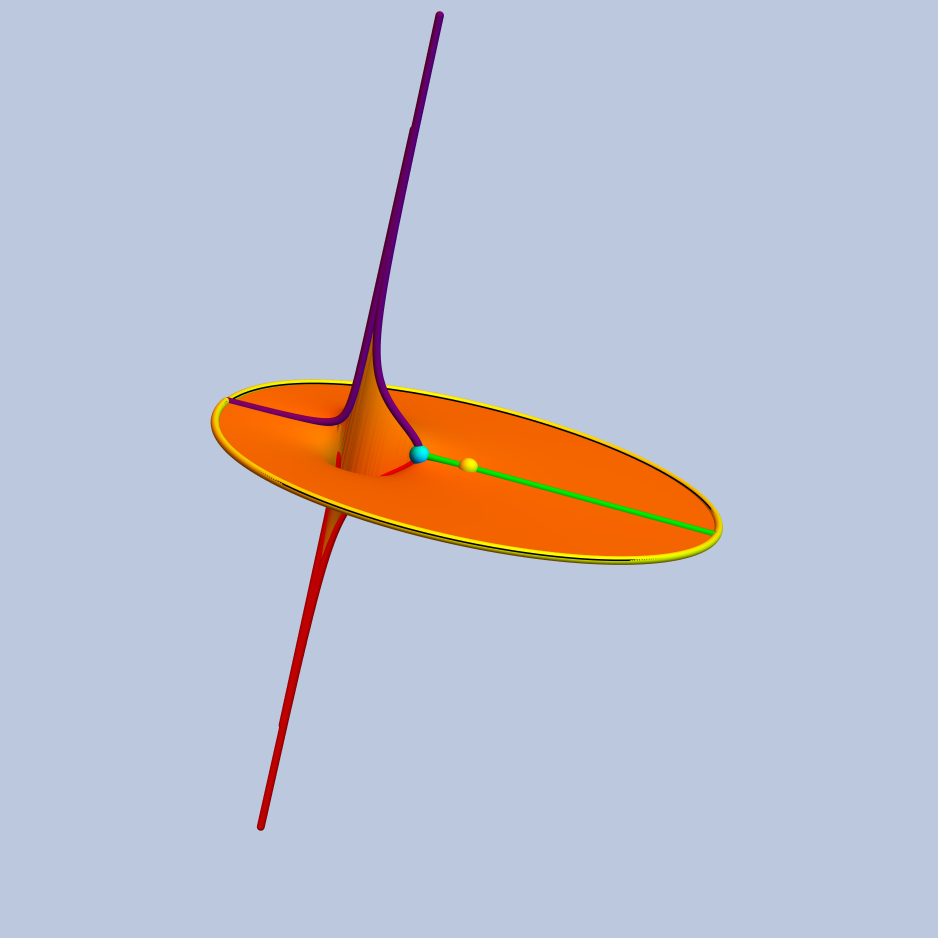}}
		\caption{\small{Graphs of the real and imaginary parts (left and right, respectively) of the Heun function  $\mathcal{H}\ell_1(\mu,h;z)$ over the disc of radius $5$ centered at the origin. In this example, the elliptic parameter is $\mu=0.4$ and the Floquet eigenvalue is $h\simeq 0.67\in\mathcal{S}_{\mu,2/5}\subset\mathcal{S}_\mu$. The yellow points are $(0,\Re(\mathcal{H}\ell_1(\mu,h;0)))$ and $(0,\Im(\mathcal{H}\ell_1(\mu,h;0)))$, while the green points are  $(1,\Re(\mathcal{H}\ell_1(\mu,h;1)))$ and $(1,\Im(\mathcal{H}\ell_1(\mu,h;1)))$.}}\label{Heun1}
	\end{center}
\end{figure}

\begin{figure}[h]
	\begin{center}
		\makebox[\textwidth][c]{
			\includegraphics[height=6cm,width=6cm]{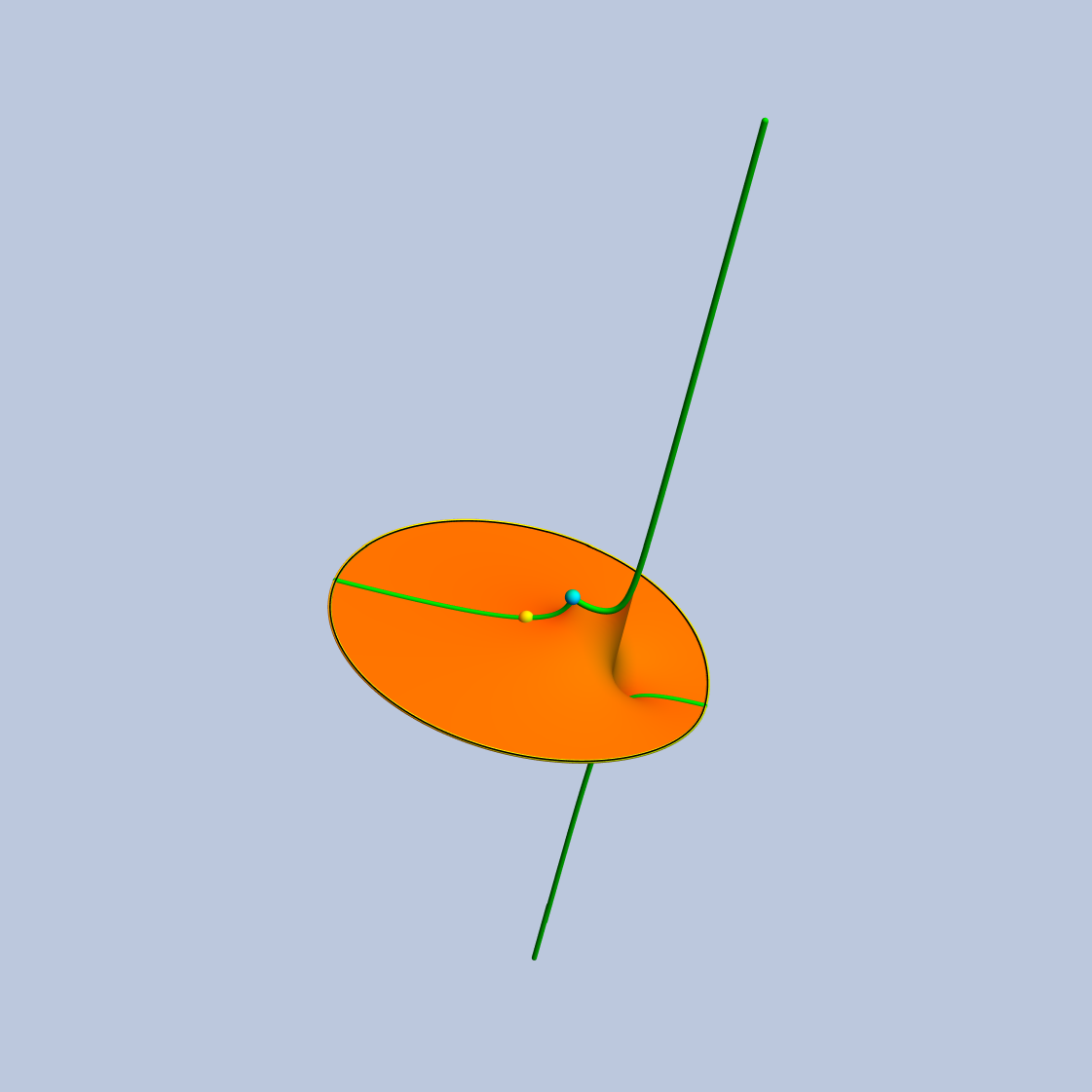}\quad\quad
			\includegraphics[height=6cm,width=6cm]{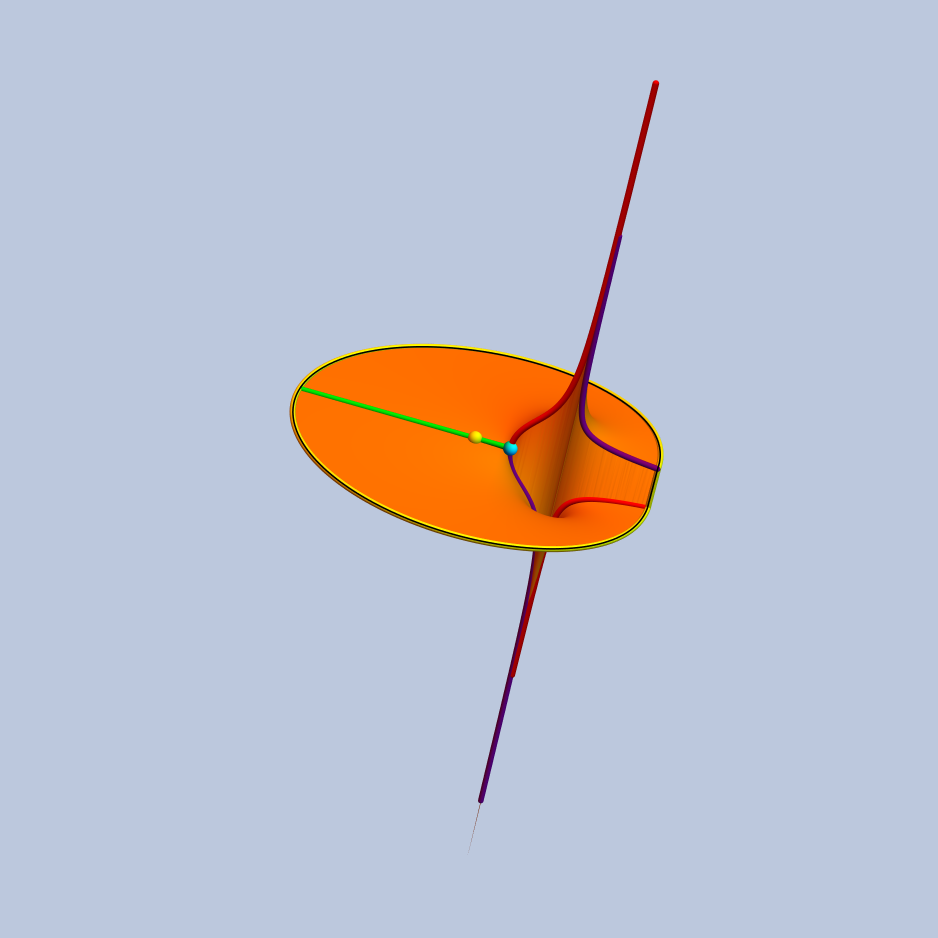}}
		\caption{\small{Graphs of the real and imaginary parts (left and right, respectively) of the Heun function  $\mathcal{H}\ell_2(\mu,h;z)$ over the disc of radius $5$ centered at the origin. In this example, the elliptic parameter is $\mu=0.4$ and the Floquet eigenvalue is $h\simeq 0.67\in\mathcal{S}_{\mu,2/5}\subset\mathcal{S}_\mu$. The yellow points are $(0,\Re(\mathcal{H}\ell_2(\mu,h;0)))$ and $(0.\Im(\mathcal{H}\ell_2(\mu,h;0)))$, while the green points are  $(1,\Re(\mathcal{H}\ell_2(\mu,h;1)))$ and $(1,\Im(\mathcal{H}\ell_2(\mu,h;1)))$.}}\label{Heun2}
	\end{center}
\end{figure}
 
\begin{remark} \emph{The functions $\mathcal{H}\ell_1(\mu,h;z)$ and $\mathcal{H}\ell_2(\mu,h;z)$ have the following main properties:
\begin{itemize}
\item The functions $\mathcal{H}\ell_1(\mu,h;z)$ and  $\mathcal{H}\ell_2(\mu,h;z)$ have a branch cut discontinuity in the complex plane running from $1$ to $+\infty$. Away from the branch cut discontinuity, they are holomorphic.
\item The real parts of $\mathcal{H}\ell_1(\mu,h;z)$ and  $\mathcal{H}\ell_2(\mu,h;z)$ are continuous on ${\mathbb R}\setminus \{1/\mu\}$ and are unbounded in any punctured open neighborhood of $1/\mu>1$. They fail to be differentiable at $z=1$.
\item The functions $s\in \R\setminus\{1/\mu\} \longmapsto\Re(\mathcal{H}\ell_1(\mu,h;s))$ and $s\in \R\setminus\{1/\mu\} \longmapsto \Re(\mathcal{H}\ell_2(\mu,h;s))$ are continuous and real analytic away from $s=1$ (see Figure \ref{Heun3}) and $\Im(\mathcal{H}\ell_1(\mu,h;s))=\Im(\mathcal{H}\ell_2(\mu,h;s))=0$ for every $s<1$.
\item On the real interval $(-\infty,1)$ the functions $\mathcal{H}\ell_1(\mu,h;z)$ and $\mathcal{H}\ell_2(\mu,h;z)$ are real-valued.
\end{itemize}}
\end{remark}

\begin{figure}[h]
	\begin{center}
		\makebox[\textwidth][c]{
			\includegraphics[height=6cm,width=6cm]{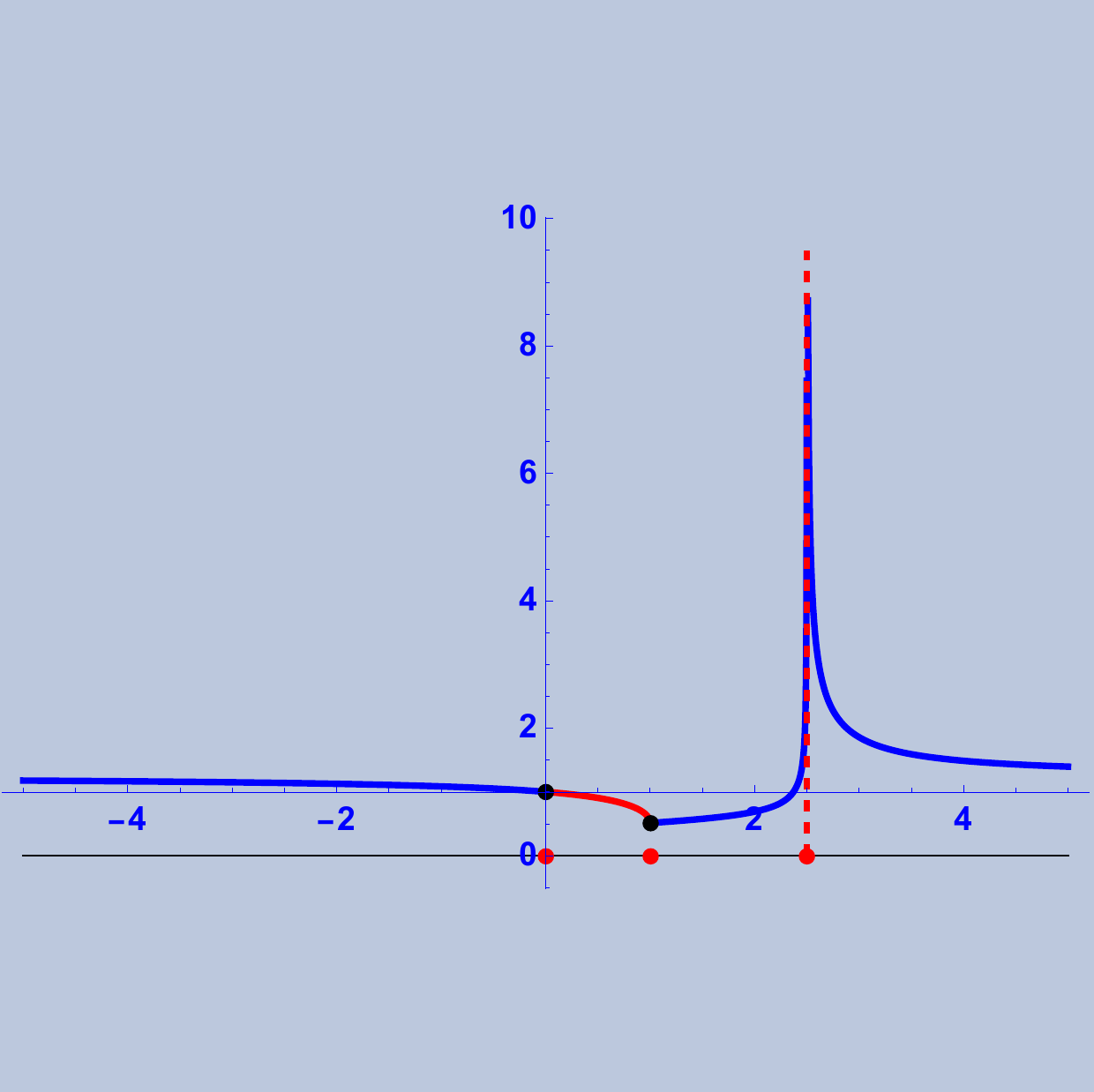}\quad\quad
			\includegraphics[height=6cm,width=6cm]{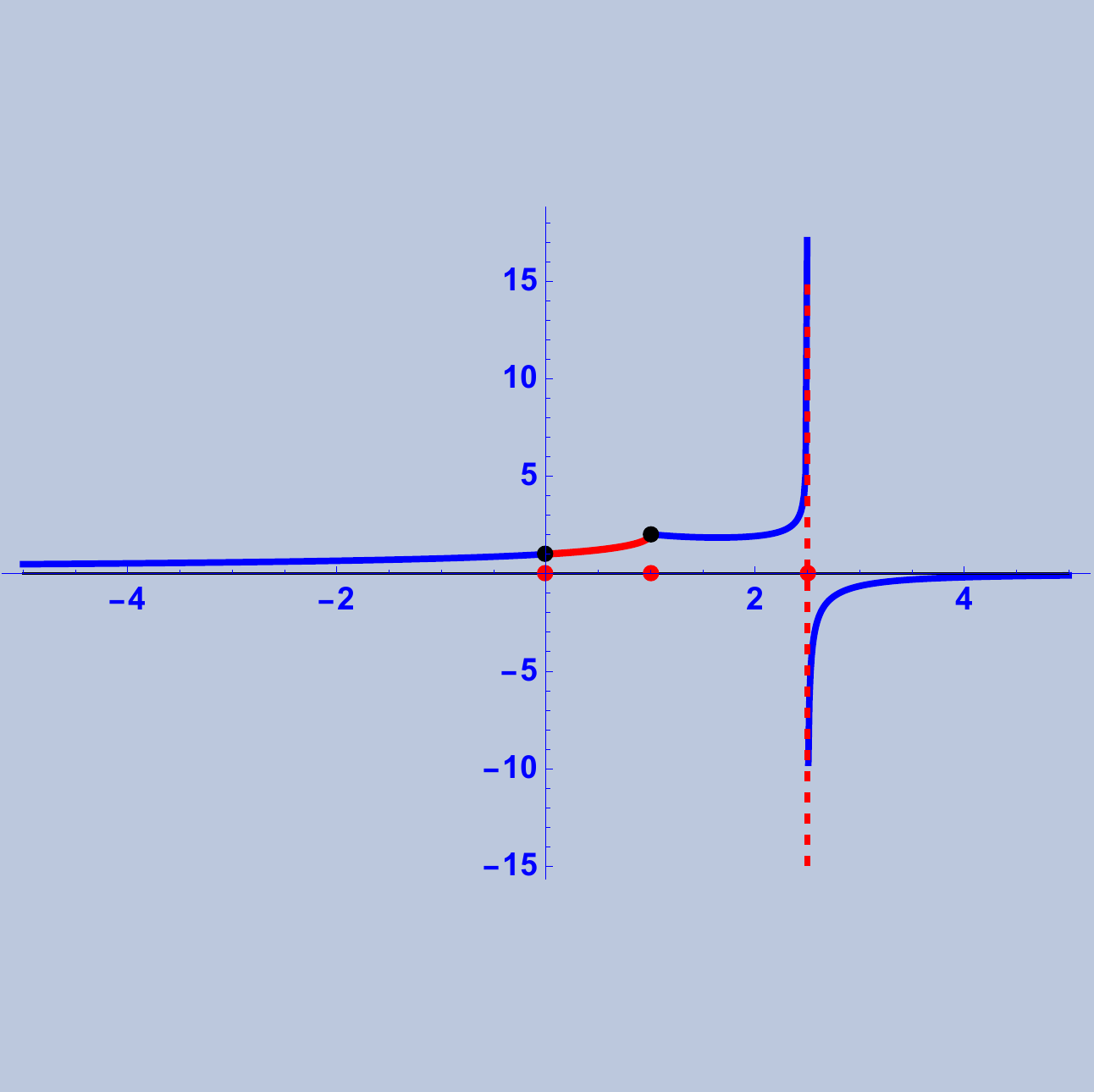}}
		\caption{\small{The graphs of the functions $s\in \R\longmapsto\Re(\mathcal{H}\ell_1(\mu,h;s))$ (Left) and $s\in \R\longmapsto\Re(\mathcal{H}\ell_2(\mu,h;s))$ (Right). For both cases $\mu=0.4$ and $h\simeq 0.67\in\mathcal{S}_{\mu,2/5}\subset\mathcal{S}_\mu$.}}\label{Heun3}
	\end{center}
\end{figure}
 
Consider the even and odd functions defined, respectively, by
\begin{eqnarray*}
	\widetilde{{\rm cl}}_{h,\mu}&:&s\in \R\longmapsto \mathcal{H}\ell_1(\mu,h;\rm{sn}^2(s,\mu))\,\sqrt{1-\mu\,{\rm sn}^2(s,\mu)}\in\R\,,\\
   \widetilde{{\rm sl}}_{h,\mu}&:&s\in \R\longmapsto \mathcal{H}\ell_2(\mu,h;{\rm sn}^2(s,\mu))\,\sqrt{1-\mu\,\rm{sn}^2(s,\mu)}\,{\rm sn}(s,\mu)\in\R\,.
\end{eqnarray*}
The functions $\widetilde{{\rm cl}}_{h,\mu}$ and $\widetilde{  {\rm sl}}_{h,\mu}$ are continuous and periodic with least period $2K(\mu)$. These functions are real analytic on the open intervals $((2p+1)K(\mu),(2p+3)K(\mu))$, $p\in {\mathbb Z}$, and fail to be differentiable at the points $(2p+1)K(\mu)$, $p\in {\mathbb Z}$. The reason for this lack of regularity is that $\lvert {\rm sn}((2p+1)K(\mu),\mu)\rvert=1$ for every $p\in\mathbb{Z}$ and the functions $s\in\R\longmapsto \Re(\mathcal{H}\ell_1(\mu,h;s))$ and $s\in\R\longmapsto \Re(\mathcal{H}\ell_2(\mu,h;s))$ are continuous but not differentiable at $s=1$. Indeed, at the points $(2p+1)K(\mu)$, $p\in {\mathbb Z}$, the derivatives of $\widetilde{{\rm cl}}_{h,\mu}$ and $\widetilde{{\rm sl}}_{h,\mu}$ have a jump discontinuity (see Figure \ref{Heun4} for an example of a graph of the functions $\widetilde{{\rm cl}}_{h,\mu}$ and $\widetilde{{\rm sl}}_{h,\mu}$). 

\begin{figure}[h]
	\begin{center}
		\makebox[\textwidth][c]{
			\includegraphics[height=4cm,width=6cm]{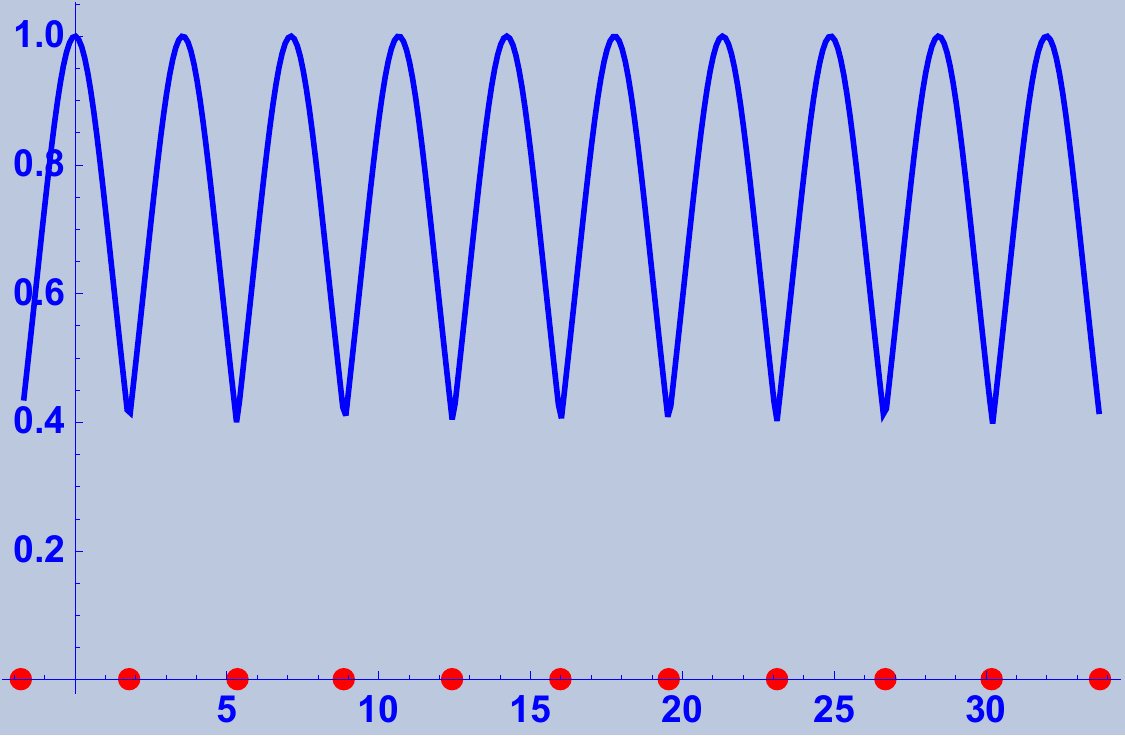}\quad\quad
			\includegraphics[height=4cm,width=6cm]{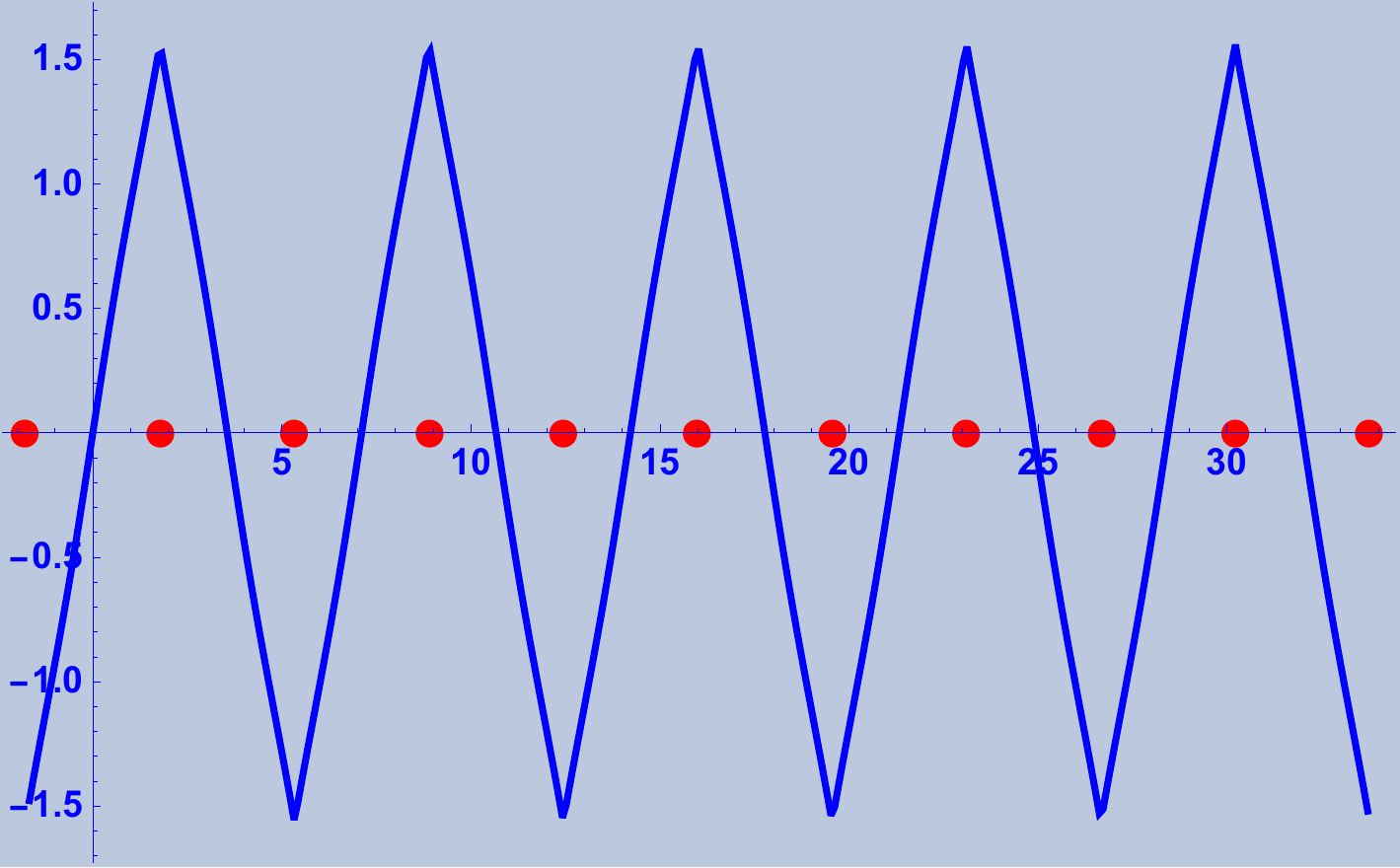}}
		\caption{\small{Left: The graph of the function $\widetilde{{\rm cl}}_{h,\mu}$. Right: The graph of the function $\widetilde{{\rm sl}}_{h,\mu}$. In both cases $\mu=0.4$ and $h\simeq 0.67$. The domain illustrated in the images is $[-K(\mu),19K(\mu)]$.}}\label{Heun4}
	\end{center}
\end{figure}

Using \eqref{Heun} and the first and second order ordinary differential equations satisfied by the Jacobi's ${\rm sn}$ function (see \cite{RW}, p. 560) it follows that the functions $\widetilde{{\rm cl}}_{h,\mu}$ and $\widetilde{{\rm sl}}_{h,\mu}$ solve the Lam\'e equation \eqref{Lame} with elliptic parameter $\mu$ and eigenvalue parameter $h$
on the intervals  $((2p+1)K(\mu),(2p+3)K(\mu))$, $p\in\mathbb{Z}$.  Since  $\widetilde{{\rm cl}}_{h,\mu}(0)=1$, $\widetilde{{\rm cl}}_{h,\mu}'(0)=0$, $\widetilde{{\rm sl}}_{h,\mu}(0)=0$, and $\widetilde{{\rm sl}}_{h,\mu}'(0)=1$, we have that these functions coincide with the fundamental solutions ${\rm cl}_{h,\mu}$ and ${\rm sl}_{h,\mu}$ of the Lam\'e equation \eqref{Lame} on the interval $[-K(\mu),K(\mu)]$. That is,
\begin{eqnarray*}
	\widetilde{{\rm cl}}_{h,\mu}|_{[-K(\mu),K(\mu)]} &=&  {\rm cl}_{h,\mu}|_{[-K(\mu),K(\mu)]}\,,\\  \widetilde{{\rm sl}}_{h,\mu}|_{[-K(\mu),K(\mu)]} &=&  {\rm sl}_{h,\mu}|_{[-K(\mu),K(\mu)]}\,.
\end{eqnarray*}
In order to extend these functions and build the fundamental solutions $ {\rm cl}_{h,\mu}$ and $ {\rm sl}_{h,\mu}$ on the whole real axis we compute
$$Q_{\pm,h,\mu}=\lim_{s\to \pm K(\mu)^{\mp}} 	\begin{pmatrix}  \widetilde{{\rm cl}}_{h,\mu}(s)& \widetilde{{\rm cl}}'_{h,\mu}(s)\\
\widetilde{{\rm sl}}_{h,\mu}(s)& \widetilde{{\rm sl}}'_{h,\mu}(s)
\end{pmatrix},
$$
and the monodromy matrix 
$$M_{\mu}(h)=Q_{+,h,\mu} Q_{-,h,\mu}^{-1}\in \SL\,.$$
Then,  on the intervals  $[-K(\mu)+pK(\mu),K(\mu)+pK(\mu)]$, $p\in {\mathbb Z}$, the fundamental solutions 
${\rm cl}_{h,\mu}$ and $ {\rm sl}_{h,\mu}$ of the Lam\'e equation \eqref{Lame} are given, respectively, by
\begin{eqnarray*}
  {\rm cl}_{h,\mu}(s)&=&\left(M_{\mu}(h)\right)^p_{1\,1} \widetilde{{\rm cl}}_{h,\mu}\left(s-pK(\mu)\right)+\left(M_{\mu}(h)\right)^p_{1\,2} \widetilde{{\rm sl}}_{h,\mu}\left(s-pK(\mu)\right),\label{fsol}\\
   {\rm sl}_{h,\mu}(s)&=&\left(M_{\mu}(h)\right)^p_{2\,1} \widetilde{{\rm cl}}_{h,\mu}\left(s-pK(\mu)\right)+\left(M_{\mu}(h)\right)^p_{2\,2} \widetilde{{\rm sl}}_{h,\mu}\left(s-pK(\mu)\right).\label{fsol2}
\end{eqnarray*}

\begin{remark} \emph{In the cases $q=0,1$, the fundamental solutions coincide, up to a constant multiplicative factor, with the Lam\'e functions $EC_1^r(s,\mu)$ and $ES_1^r(s,\mu)$ (see, for instance, \cite{V0}).}
\end{remark}

Let $m<n$ be two relatively prime natural numbers and consider that $q=m/n$ is the characteristic exponent of the Floquet eigenvalue $h\in\mathcal{S}_{\mu,q}\subset  \mathcal{S}_{\mu}$. Then, the monodromy $M_\mu(h)$ has order $2n$, if $m$ is odd, and order $4n$, if $m$ is even.  Consequently, the fundamental solutions are periodic with least period $2nK(\mu)$ (if $m$ is odd) and $4nK(\mu)$ (if $m$ is even).
   
In what follows we will illustrate with an specific example the procedure to build the fundamental periodic solutions of the Lam\'e equation \eqref{Lame}.
   
\begin{ex} Fix the elliptic parameter $\mu=0.4$ and consider the Floquet eigenvalue $h\simeq 0.67\in\mathcal{S}_{\mu,2/5}\subset\mathcal{S}_\mu$ with characteristic exponent $q=2/5$.
	
For these values, Figures \ref{Heun1} and \ref{Heun2} reproduce the graphs of the imaginary and real parts of the Heun functions $\mathcal{H}\ell_1(\mu,h;z)$ and $\mathcal{H}\ell_2(\mu,h;z)$. Figure \ref{Heun3} depicts the graphs of the functions $s\in \R\longmapsto \Re(\mathcal{H}\ell_1(\mu,h;s))$ and $s\in \R\longmapsto \Re(\mathcal{H}\ell_2(\mu,h;s))$. Observe that since $-1\le {\rm sn}(s,\mu)\le 1$ and 
${\rm sn}(s,\mu)=1$ if and only if $s=2pK(\mu)$ and ${\rm sn}(s,\mu)=-1$ if and only if $s=(2p-1)K(\mu)$, $p\in {\mathbb Z}$, only $\Re(\mathcal{H}\ell_1(\mu,h;s))|_{[0,1]} = \mathcal{H}\ell_1(\mu,h;s)|_{[0,1]}$ and $\Re(\mathcal{H}\ell_2(\mu,h;s))|_{[0,1]}=\mathcal{H}\ell_2(\mu,h;s)|_{[0,1]}$ are relevant for the construction of the fundamental solutions of the Lam\'e equation (these parts are represented in red in Figure \ref{Heun3}). Additionally, Figure \ref{Heun4} reproduces the graphs of the functions $  \widetilde{{\rm cl}}_{h,\mu}$ and $  \widetilde{{\rm sl}}_{h,\mu}$ on the interval $[-K(\mu),19K(\mu)]$. The monodromy matrix $M_{\mu}(h)\in \SL$ can be computed numerically obtaining
$$M_{\mu}(h)\simeq 	\begin{pmatrix} -0.309017&-0.331386\\
2.72947& -0.309017\end{pmatrix},
$$
which can be checked to have order $10$. 

Then, we can evaluate ${\rm cl}_{h,\mu}$ and ${\rm sl}_{h,\mu}$ by means of the explicit expression obtained above. Due to the order of the monodromy, the fundamental solutions are periodic with least period $20K(\mu)$ (recall that since $m=2$ is even the least period is $4nK(\mu)=20K(\mu)$).  

These functions have been illustrated employing the above explained method in red in Figure \ref{Heun5}. Moreover, they are compared with a numerical solution of the Lam\'e equation (in dashed black).
\end{ex}

\begin{figure}[h]
	\begin{center}
		\makebox[\textwidth][c]{
			\includegraphics[height=4cm,width=6cm]{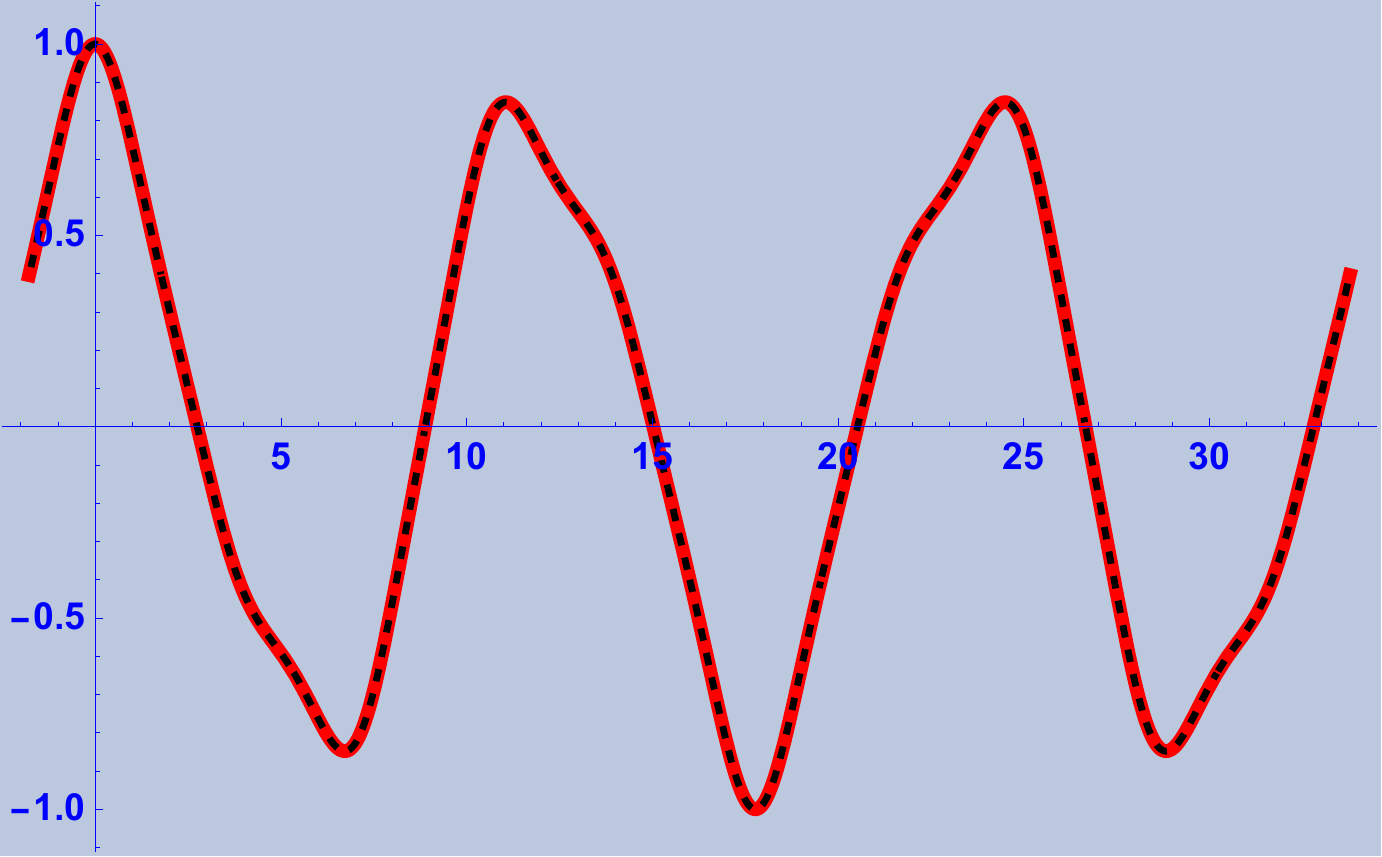}\quad\quad
			\includegraphics[height=4cm,width=6cm]{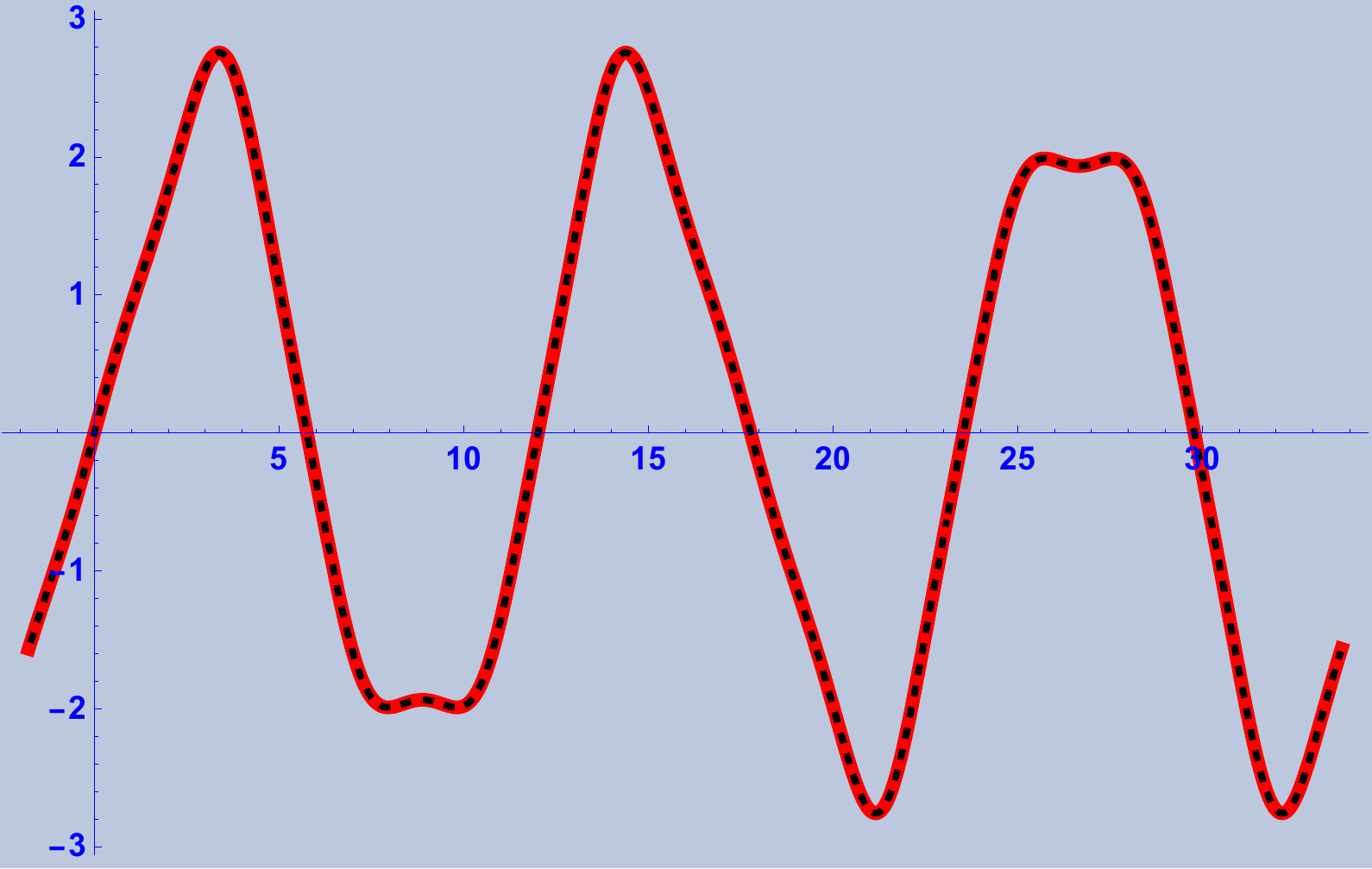}}
		\caption{\small{Left: The graph of the function ${\rm cl}_{h,\mu}$. Right: The graph of the function ${\rm sl}_{h,\mu}$. In both cases $\mu=0.4$ and $h\simeq 0.67$. The domain illustrated in the images is $[-K(\mu),19K(\mu)]$. Both graphs are compared with a numerical solution of the Lam\'e equation \eqref{Lame}, which is shown in dashed black.}}\label{Heun5}
	\end{center}
\end{figure}

Now that we know how to compute the periodic fundamental solutions of the Lam\'e equation \eqref{Lame}, we are in a position to study stationary solutions of the LIEN flow \eqref{LIEN2}.

\subsection{Stationary Solutions of the LIEN Flow}

\begin{defn} We say that a null curve $\gamma:J\subseteq\R\longrightarrow\AdS$ is a \emph{stationary curve} of the LIEN flow \eqref{LIEN2} if its evolution by the flow is given by
$$(s,t)\in J\times I\subseteq\R^2\longmapsto A(t)\gamma(s+2\ell t)B(t)^{-1}\in\AdS\,,$$
where $\ell\in\R$ and $A,B:I\subseteq\R\longrightarrow\SL$ are two maps such that $A(0)=B(0)={\rm Id}$. (Recall that, for convenience, we are assuming that $0\in I$).
\end{defn} 

From Theorem \ref{induced}, it follows that the induced evolution equation on the bending $\bending$ of $\gamma$ is the KdV equation \eqref{KdV2}. Since $\gamma$ is a stationary curve of the LIEN flow \eqref{LIEN2}, we then deduce that the bending $\bending$ of $\gamma$ is a solution of the third order differential equation
\begin{equation}\label{stationary}
	\bending'''+2\ell \bending'-6\bending\bending'=0\,,
\end{equation}
where $\left(\,\right)'$ denotes the derivative with respect to the proper time $s\in J$. The bending of its evolution by the flow is the traveling wave solution $\bending(s+2\ell t)$ of the KdV equation \eqref{KdV2}.

If the bending $\bending$ of $\gamma$ is a nonconstant function, equation \eqref{stationary} can be integrated twice obtaining a first order differential equation in separable variables which involves the square root of a polynomial of degree three. Hence, nonconstant solutions of \eqref{stationary} can be expressed in terms of elliptic functions. In particular, the periodic solutions are of the form
\begin{equation}\label{solution}
	\bending(s)=\frac{1}{h_--h_+}\left(4\,\mu\,{\rm sn}^2\left(\sqrt{\frac{2}{h_--h_+}} \,s,\mu\right)-h_--h_+\right),
\end{equation}
where $\mu\in(0,1)$ is a constant\footnote{The constant $\mu\in(0,1)$ of \eqref{solution} is \emph{a priori} unrelated to the elliptic parameter of the Lam\'e equation \eqref{Lame}. However, in Theorem \ref{stationarycurve} it is shown that both constants do coincide. Hence, for simplicity, we use the same letter for both of them.} and $h_->h_+$ are two real numbers satisfying
\begin{equation}\label{m}
	\ell(h_--h_+)+3(h_-+h_+)=4(1+\mu)\,.
\end{equation}

We next show how to construct stationary curves of the LIEN flow \eqref{LIEN2} from the fundamental solutions of the Lam\'e equation \eqref{Lame}. Recall that the procedure to construct these periodic fundamental solutions was described in Subsection 5.2.

\begin{thm}\label{stationarycurve}
	A null curve $\gamma:J=\R\longrightarrow\AdS$ with nonconstant periodic bending $\bending$ is a stationary curve of the LIEN flow \eqref{LIEN2} if and only if it is equivalent to the curve
	$$s\in\R\longmapsto \delta_{h_+,\mu}\left(\sqrt{\frac{2}{h_--h_+}} \,s\right)\delta_{h_-,\mu}\left(\sqrt{\frac{2}{h_--h_+}} \,s\right)^{-1}\in\AdS\,,$$
	where $h_->h_+$ are two real numbers, $\mu\in(0,1)$ is a constant and $\delta_{h,\mu}:\R\longrightarrow\SL$ is the map defined in \eqref{beta}. 
	
	Moreover, the null curve $\gamma$ is closed if and only if $h_+,h_-\in\mathcal{S}_{\mu}$. In other words, if and only if $h_+$ and $h_-$ are Floquet eigenvalues with elliptic parameter $\mu$.
\end{thm}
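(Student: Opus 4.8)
The plan is to establish both implications by explicitly integrating the spinorial Frenet-type equations \eqref{dF+}--\eqref{dF-} when the bending has the form \eqref{solution}, and then to read off closedness from the monodromy. First, if $\gamma$ is a stationary curve with nonconstant periodic bending, the discussion preceding the statement already forces $\bending$ to solve \eqref{stationary}, hence, after a harmless translation of the proper time, to be of the form \eqref{solution} for some $\mu\in(0,1)$ and $h_->h_+$ subject to \eqref{m}. Writing $c=\sqrt{2/(h_--h_+)}$, a one-line computation using $c^2(h_--h_+)=2$ gives $\bending\pm1=c^2\bigl(2\mu\,{\rm sn}^2(cs,\mu)-h_\pm\bigr)$. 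I would then introduce the constant gauge $P={\rm diag}(1/\sqrt c,\sqrt c)\in\SL$ and check, via the Cauchy problem \eqref{Cauchy}, that $s\mapsto\delta_{h_\pm,\mu}(cs)\,P$ solve respectively \eqref{dF+} and \eqref{dF-}: the rescaling by $c$ and the conjugation by $P$ together turn the connection matrix of $\delta_{h_\pm,\mu}(cs)$ into $\left(\begin{smallmatrix} 0 & \bending\pm1 \\ 1 & 0 \end{smallmatrix}\right)$. Since the spinor frame field $(F_+,F_-)$ of $\gamma$ obeys the same equations, $F_\pm(s)=C_\pm\,\delta_{h_\pm,\mu}(cs)\,P$ for constants $C_\pm\in\SL$, and therefore $\gamma=F_+F_-^{-1}=C_+\bigl(\delta_{h_+,\mu}(cs)\,\delta_{h_-,\mu}(cs)^{-1}\bigr)C_-^{-1}$, which exhibits $\gamma$ as the image of the model curve under the causal isometry $X\mapsto C_+XC_-^{-1}$ of $\AdS$; hence $\gamma$ is equivalent to it.

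For the converse I would fix $h_->h_+$ and $\mu\in(0,1)$, define $\ell$ through \eqref{m}, and let $\gamma_0$ be the model curve. By the computation above its spinor frame field is $\bigl(\delta_{h_+,\mu}(c\,\cdot)\,P,\,\delta_{h_-,\mu}(c\,\cdot)\,P\bigr)$, so Theorem \ref{relation} shows $\gamma_0$ is a null curve whose bending $\bending_0$ is \eqref{solution}, and hence solves \eqref{stationary}. Thus the travelling wave $\bending(s,t)=\bending_0(s+2\ell t)$ solves the KdV equation \eqref{KdV2}, and Theorem \ref{induced} produces a LIEN solution $\gamma=E_1E_{-1}^{-1}$ with this bending. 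The key point is to describe the extended frames explicitly: since $\mathcal{K}_{\pm1}$ and $\mathcal{P}_{\pm1}$ of \eqref{KP} depend on $(s,t)$ only through $\xi=s+2\ell t$, in the coordinates $(\xi,t)$ the Lax connection reads $\mathcal{K}_{\pm1}(\xi)\,d\xi+R_{\pm1}(\xi)\,dt$ with $R_{\pm1}:=\mathcal{P}_{\pm1}-2\ell\,\mathcal{K}_{\pm1}$, and the zero-curvature equation collapses to $R_{\pm1}'=[R_{\pm1},\mathcal{K}_{\pm1}]$; consequently, choosing the extended frames so that $\Psi_{\pm1}(\xi):=E_{\pm1}(\xi,0)$ is the spinor frame of $\gamma_0$ (so $\Psi_{\pm1}'=\Psi_{\pm1}\mathcal{K}_{\pm1}$), the matrix $S_{\pm1}:=\Psi_{\pm1}R_{\pm1}\Psi_{\pm1}^{-1}\in\mathfrak{sl}(2,\R)$ is constant and one checks that $E_{\pm1}(\xi,t)=e^{tS_{\pm1}}\Psi_{\pm1}(\xi)$ solves the corresponding linear system. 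This yields $\gamma(\cdot,0)=\gamma_0$ and $\gamma(s,t)=e^{tS_1}\,\gamma_0(s+2\ell t)\,e^{-tS_{-1}}$ with $e^{0\cdot S_{\pm1}}={\rm Id}$, which is precisely the form in the definition of a stationary curve; hence $\gamma_0$ is stationary.

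Finally, for the closedness assertion I would note that, $\bending_0$ being nonconstant, its least period is $\rho=2K(\mu)/c$, so $c\rho=2K(\mu)$; then from the spinor frame $\delta_{h_\pm,\mu}(cs)\,P$ together with $\delta_{h_\pm,\mu}(0)={\rm Id}$ one gets the monodromy components $M_\pm=\delta_{h_\pm,\mu}\bigl(2K(\mu)\bigr)=M_\mu(h_\pm)$, the monodromies of the Lam\'e equation (for a curve merely equivalent to the model these are replaced by conjugates, which is immaterial below). By Subsection \ref{OrbitType} the curve $\gamma$ is closed if and only if both $M_\pm$ have finite order, and by the Floquet theory recalled in Subsection 5.1 the matrix $M_\mu(h_\pm)$ has finite order if and only if $h_\pm$ is a Floquet eigenvalue with elliptic parameter $\mu$, that is, $h_\pm\in\mathcal{S}_\mu$; combining the two gives the stated equivalence. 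The main obstacle I anticipate is the converse: converting ``$\gamma_0$ has a travelling-wave bending'' into ``$\gamma_0$ is stationary'' requires unpacking the extended-frame construction of Theorem \ref{induced} for travelling-wave data and carefully tracking the reparametrization $\xi=s+2\ell t$ and the gauge by $P$; the forward implication and the monodromy count are, by comparison, routine.
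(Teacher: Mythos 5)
Your proof is correct and follows essentially the same route as the paper: the forward direction rescales the pair of cousins (your right gauge by $P={\rm diag}(1/\sqrt{c},\sqrt{c})$ is the same normalization the paper imposes via $F_\pm(0)$ and $\widetilde{\eta}_\pm(t)=\sqrt{\sigma}\,\eta_\pm(t/\sigma)$) so that the spinor frames solve the Lam\'e Cauchy problem \eqref{Cauchy}, and the closedness criterion reduces, exactly as in the paper, to periodicity of $\delta_{h_\pm,\mu}$, i.e.\ $h_\pm\in\mathcal{S}_\mu$. Your more detailed converse, via the traveling-wave reduction $R_{\pm1}=\mathcal{P}_{\pm1}-2\ell\,\mathcal{K}_{\pm1}$ and the constants $S_{\pm1}$, is precisely the computation the paper carries out separately in the proposition describing the evolution ${\rm Exp}(t\mathfrak{m}_+)\gamma(s+2\ell t){\rm Exp}(-t\mathfrak{m}_-)$, so it is a legitimate (and welcome) expansion of the paper's terse ``the converse follows in the same way.''
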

\begin{proof} Consider a stationary curve $\gamma$ of the LIEN flow \eqref{LIEN2} with nonconstant periodic bending $\bending$. Since $\gamma$ is stationary and its bending is periodic then $\bending$ is given by \eqref{solution}. 
	
Denote by $(\eta_+,\eta_-)$ the pair of cousins associated with the null curve $\gamma$. Without loss of generality we may assume that $\gamma$ is normalized so that the components of the spinor frame field $(F_+,F_-)$ along $\gamma$ satisfy
$$F_\pm(0)=\begin{pmatrix} 1/\sqrt{\sigma} & 0 \\ 0 & \sqrt{\sigma} \end{pmatrix},$$
respectively. Here, we are using the constant $\sigma=\sqrt{2/(h_--h_+)}$ to simplify the expression. 

From Theorem \ref{relation}, it follows that the central affine curvatures of the star-shaped curves $\eta_+$ and $\eta_-$ are, respectively, $\curvature_+=\bending+1$ and $\curvature_-=\bending-1$. Hence, we deduce from \eqref{canonicaldF} that $\eta_\pm$ satisfy
\begin{eqnarray*}
	\eta_+''(s)&=&\left(\bending(s)+1\right)\eta_+(s)\,,\\
	\eta_-''(s)&=&\left(\bending(s)-1\right)\eta_-(s)\,,
\end{eqnarray*}
respectively. Define the functions $\widetilde{\eta}_\pm(t)=\sqrt{\sigma}\,\eta_\pm(t/\sigma)$. From the differential equations satisfied by $\eta_\pm$ and the explicit expression of $\bending(s)$ given in \eqref{solution}, we see that the new functions $\widetilde{\eta}_\pm$ satisfy Lam\'e equations of order one \eqref{Lame} with elliptic parameter $\mu\in(0,1)$ and eigenvalue parameters $h_\pm$, respectively.

Moreover, due to our choice of normalization, the maps $\delta_{h_+,\mu}=(\widetilde{\eta}_+,\widetilde{\eta}_+')$ and $\delta_{h_-,\mu}=(\widetilde{\eta}_-,\widetilde{\eta}_-')$ satisfy the Cauchy problem \eqref{Cauchy}. We then conclude from Theorem \ref{relation}, that $\gamma$ is as in the statement. The converse, follows in the same way.

For the second assertion, observe that the null curve $\gamma$ is closed if and only if the maps $\delta_{h_\pm,\mu}$ are periodic with commensurable periods. On the other hand, these maps are periodic if and only if $h_\pm\in\mathcal{S}_{\mu,q_\pm}$, respectively, where $q_\pm\in[0,1]\cap\mathbb{Q}$. Let $q_\pm=m_\pm/n_\pm$ for relatively prime natural numbers $n_\pm$ and $m_\pm$. Hence, $\delta_{h_\pm,\mu}$ are periodic with least period $2n_\pm K(\mu)$ or $4n_\pm K(\mu)$ depending on whether $m_\pm$ are even or odd, respectively (see Subsection 5.2). Consequently, their periods are automatically commensurable.
\end{proof}

Let $\gamma:J=\R\longrightarrow\AdS$ be a stationary curve of the LIEN flow \eqref{LIEN2} with nonconstant periodic bending $\bending$. From Theorem \ref{stationarycurve}, we have that $\gamma$ depends on three real parameters, namely, $h_->h_+$ and $\mu\in(0,1)$.

\begin{defn} The constant $\mu\in(0,1)$ is called the \emph{elliptic parameter} and the real numbers $h_->h_+$ will be referred as to the \emph{Lam\'e eigenvalues}.
\end{defn}

\begin{figure}[h]
	\begin{center}
		\makebox[\textwidth][c]{
			\includegraphics[height=4cm,width=4cm]{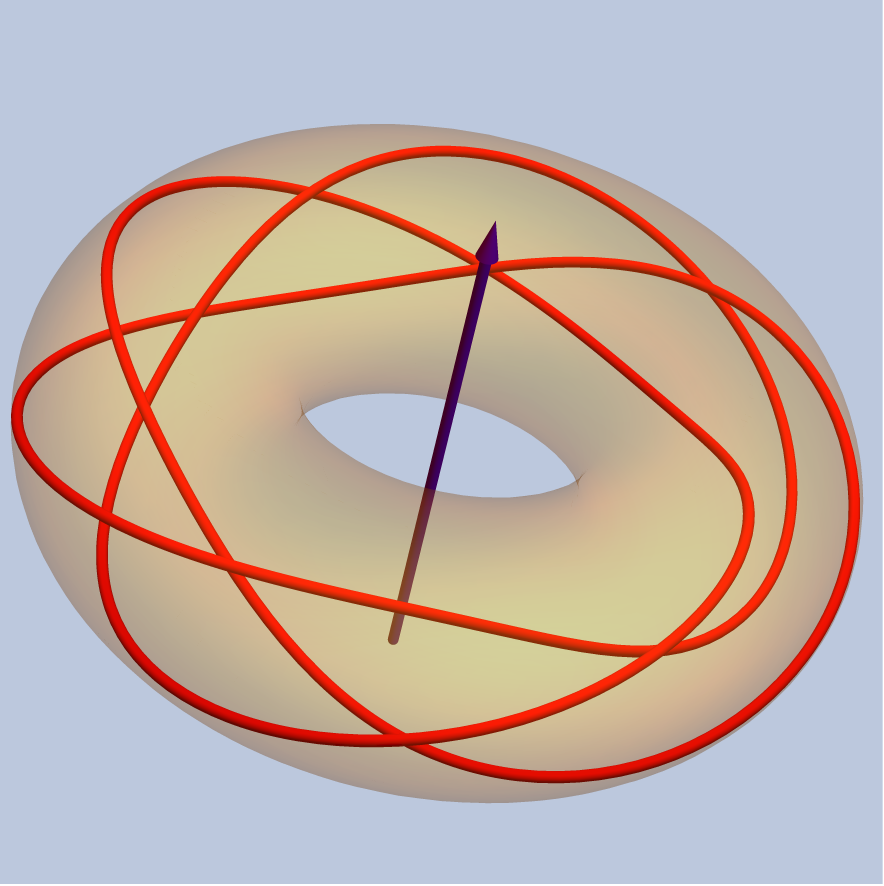}\quad
			\includegraphics[height=4cm,width=4cm]{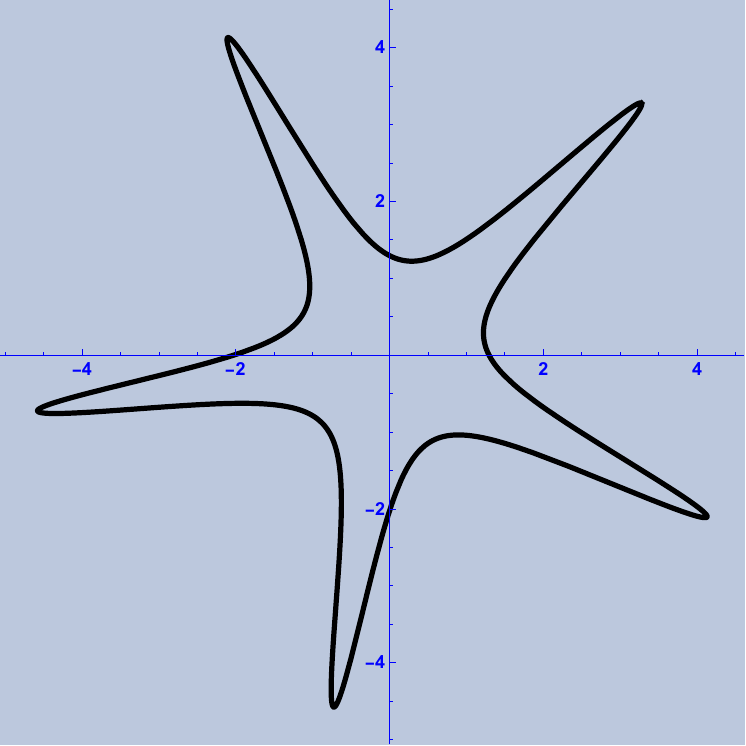}\quad
			\includegraphics[height=4cm,width=4cm]{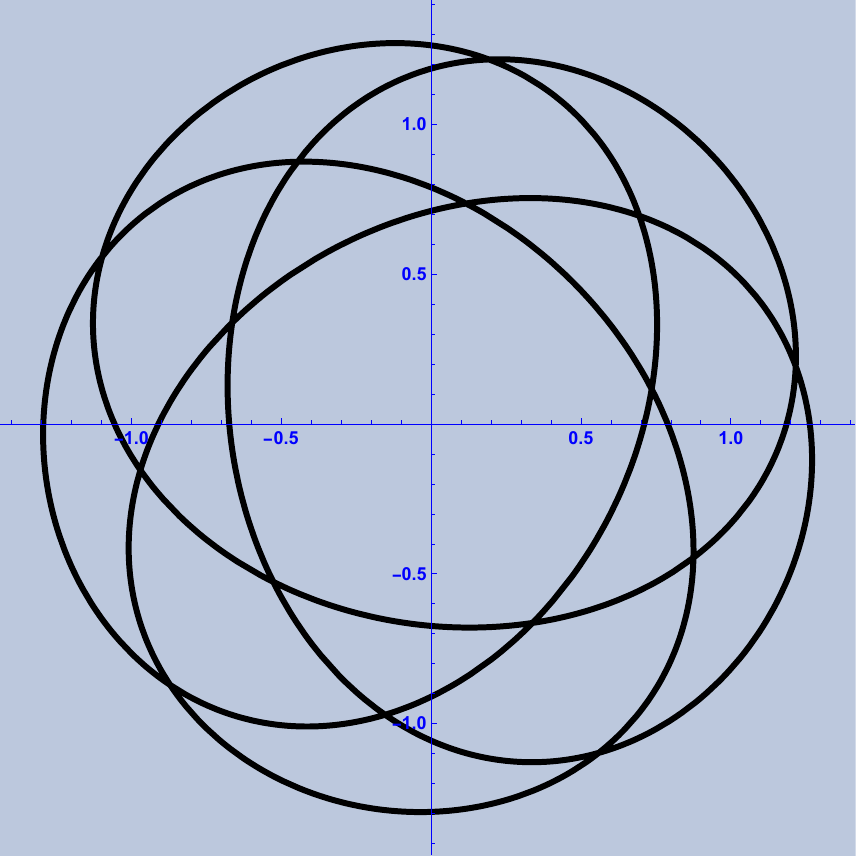}
		}
		\caption{\small{Left: The closed stationary curve of the LIEN flow \eqref{LIEN2} with elliptic parameter $\mu=0.9$ and Lam\'e eigenvalues $h_-\simeq 2.23>h_+\simeq 0.93\in\mathcal{S}_{\mu,2/5}\subset\mathcal{S}_\mu$. This curve represents a torus knot of type $(3,5)$. Center/Right: The pair of star-shaped cousins $(\eta_+,\eta_-)$ associated with the null curve.}} \label{simple}
	\end{center}
\end{figure}

In Figure \ref{simple} (Left) we show a closed stationary curve of the LIEN flow \eqref{LIEN2} obtained as in Theorem \ref{stationarycurve}. In order to produce the stationary curve, we first fix our elliptic parameter $\mu\in(0,1)$ and a desired rational number $q\in[0,1]\cap\mathbb{Q}$ (which \emph{a posteriori} will be closely related to the type of the torus knot represented by the curve). We then compute two Floquet eigenvalues $h_->h_+$ with characteristic exponent $q$ by numerically solving $\tau_\mu(h)=\cos(q\pi)$ (see Subsection 5.1). This implies that $h_+,h_-\in\mathcal{S}_{\mu,q}\subset\mathcal{S}_\mu$ and, hence, guarantees that the null curve $\gamma$ will be closed\footnote{One may as well fix two different rational numbers, say $q_+\neq q_-\in[0,1]\cap\mathbb{Q}$, and compute one Floquet eigenvalue $h_\pm$ for each characteristic exponent $q_\pm$. This also guarantees that the null curve $\gamma$ will be closed.}. Finally, we solve the two Lam\'e equations satisfied by the star-shaped cousins associated with the null curve (see Subsection 5.2) and apply Theorem \ref{relation} to obtain our stationary curve. In addition, we also illustrate in Figure \ref{simple} the pair of star-shaped cousins associated with this stationary curve. We observe that when the Lam\'e eigenvalue belongs to the interval $(\mu,1)$ (for which $q\in(0,1)$ necessarily) the star-shaped curve may be simple, while, in other cases, it has self-intersections (cf. Figure \ref{simple} Center and Right, respectively).

Other examples of closed stationary curves of the LIEN flow \eqref{LIEN2} (constructed as in Theorem \ref{stationarycurve} and following the procedure explained above) can be found in Figures \ref{cable} and \ref{examples}. In both cases, the stationary curves are cable knots.

\begin{remark}\label{cablerem} \emph{We recall here what a cable knot is (from a differential geometric viewpoint). Consider a smooth non-trivial knot of length $l$ parameterized by a curve $\alpha:\R\longrightarrow \R^3$. Without loss of generality we suppose that $\alpha$ is bi-regular and parameterized by the Euclidean arc length. Let $\{\vec{t},\vec{v},\vec{w}\}$ be an orthogonal frame field along $\alpha$ such that $\vec{t}$ is the unit tangent vector field. Let $r:\R\longrightarrow \R$ be a smooth positive periodic function with least period $l$. If ${\rm max}(r)$ is sufficiently small, the map
$$f_{\alpha}  : [(s,\theta)]\in \widetilde{ {\mathbb T}}^2=\frac{\R}{l{\mathbb Z}}\times \frac{\R}{2\pi{\mathbb Z}} \longmapsto  \alpha(s)+r(s)\cos(\theta)\vec{v}(s)+r(s)\sin(\theta)\vec{w}(s)\in\R^3\,,$$
is a smooth embedding of the torus $\widetilde{ {\mathbb T}}^2$. However,  this is not isotopic to the standard embedding. Fix a rational number $q=m/n$, $q\notin {\mathbb Z}$. Then 
$$\beta :[s]\in  \frac{\R}{n l{\mathbb Z}}\longmapsto f_{\alpha}\left(s,  \frac{2\pi}{l}qs\right)\in f_{\alpha}(\widetilde{ {\mathbb T}}^2)\,,$$
is a simple closed curve. The image of $\beta$ is a smooth cable knot of type $(m,n)$ with respect to the framing $\{\vec{t},\vec{v},\vec{w}\}$ and with the center line $\alpha$. The cabling depends both on $q=m/n$ and on the framing. We refer the reader to \cite{AD} for a topological description of satellite and cable knots.}
\end{remark}

\begin{figure}[h]
	\begin{center}
		\makebox[\textwidth][c]{
			\includegraphics[height=4cm,width=4cm]{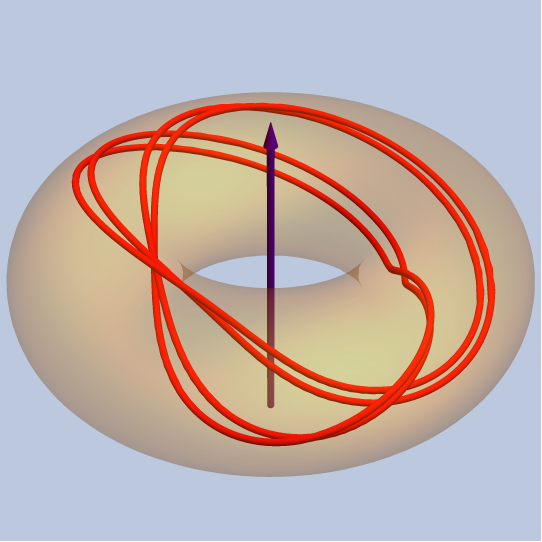}\quad
				\includegraphics[height=4cm,width=4cm]{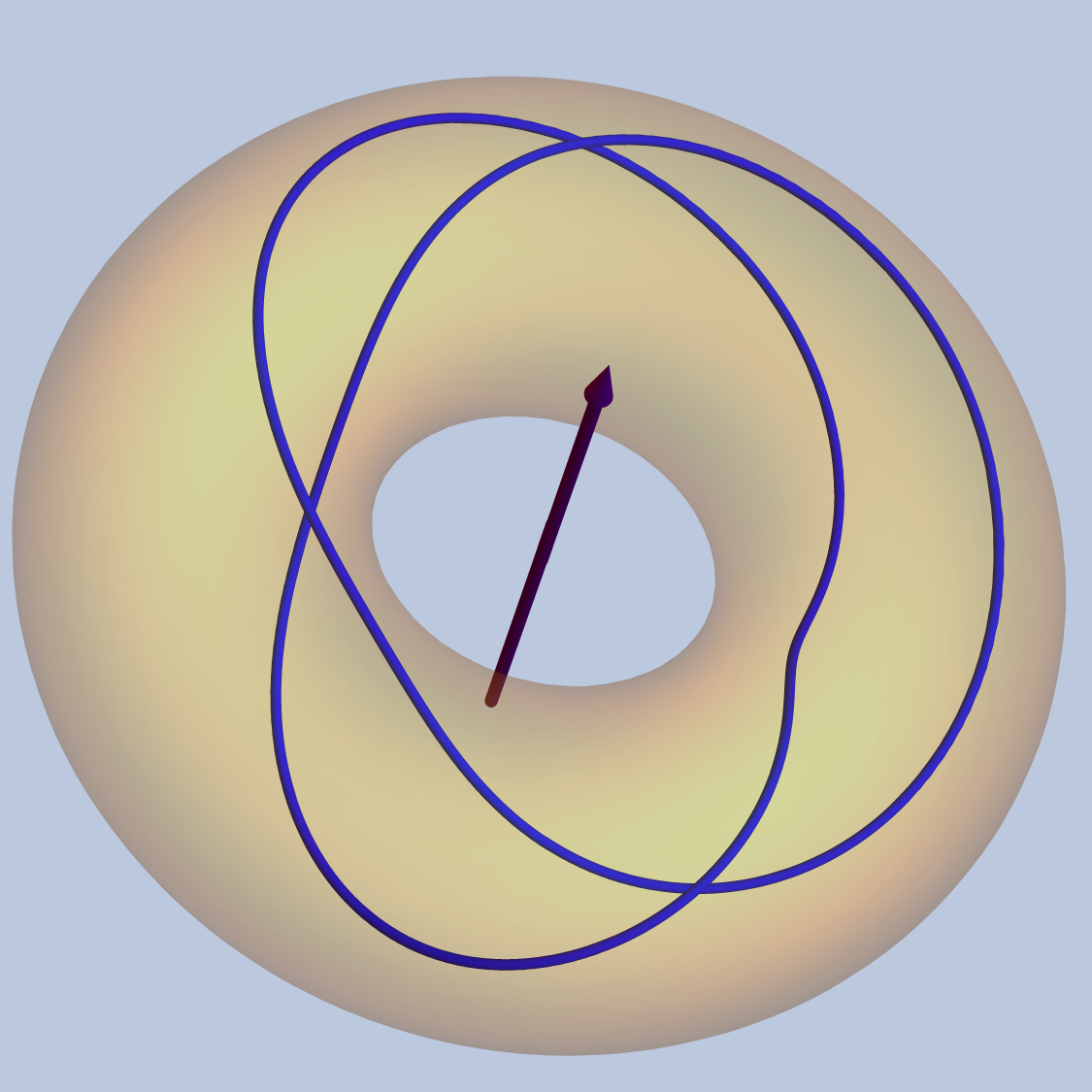}\quad
			\includegraphics[height=4cm,width=4cm]{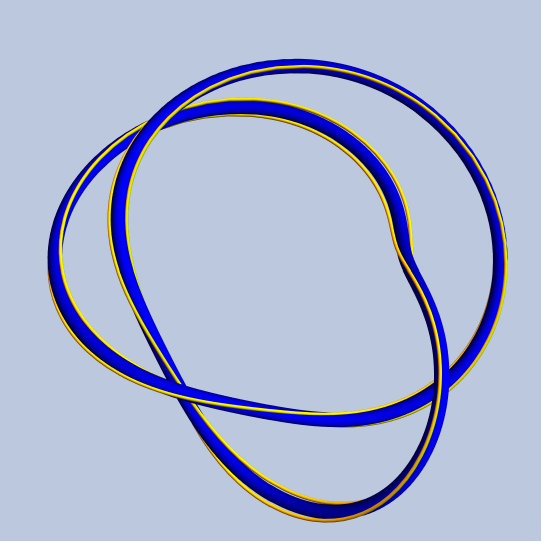}
		}
		\caption{\small{Left: The closed stationary curve of the LIEN flow \eqref{LIEN2} with elliptic parameter $\mu=0.6$ and Lam\'e eigenvalues 
		$h_-\simeq 65.59>h_+\simeq 3.29\in\mathcal{S}_{\mu,0}\subset\mathcal{S}_\mu$. This curve represents a cable knot. Center: The center line of the cabling, which is a trefoil torus knot. Right: An embedded tube $f_\alpha(\widetilde{\mathbb{T}}^2)$ (in the notation of Remark \ref{cablerem}) around the center line and the closed stationary curve traced on the tube (in yellow).}} \label{cable}
	\end{center}
\end{figure}

\begin{figure}[h]
	\begin{center}
		\makebox[\textwidth][c]{
			\includegraphics[height=4cm,width=4cm]{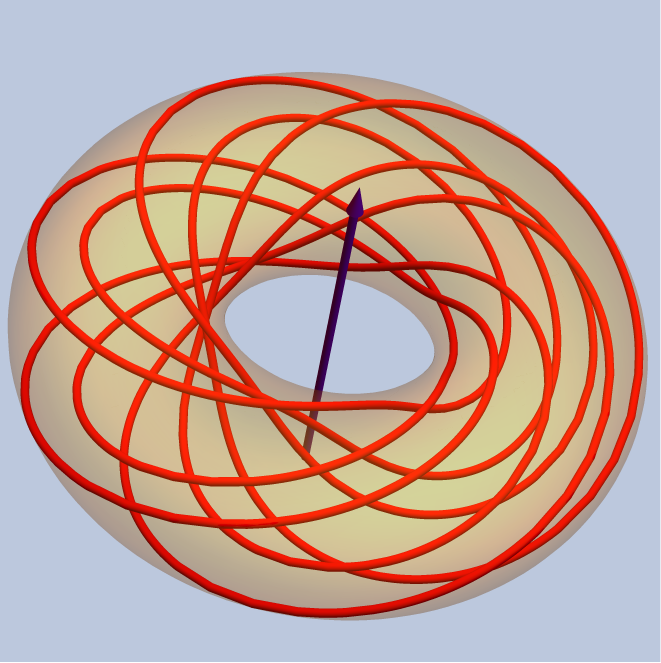}\quad
			\includegraphics[height=4cm,width=4cm]{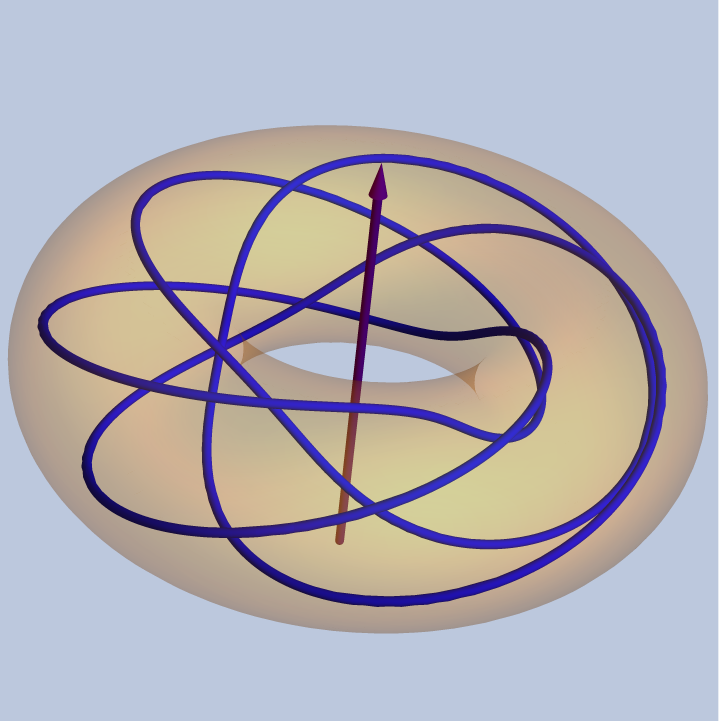}\quad
			\includegraphics[height=4cm,width=4cm]{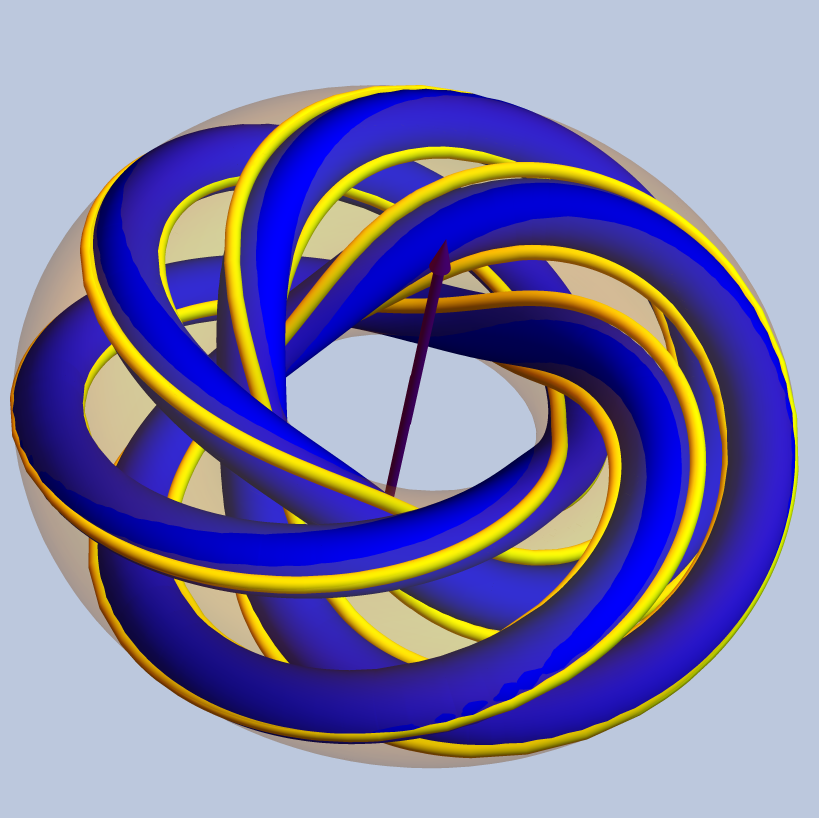}
		}
		\caption{\small{Left: The closed stationary curve of the LIEN flow \eqref{LIEN2} with elliptic parameter $\mu=0.9$ and Lam\'e eigenvalues $h_-\simeq 5.98>h_+\simeq 0.93\in\mathcal{S}_{\mu,2/5}\subset\mathcal{S}_\mu$. This curve represents a cable knot. Center: The center line of the cabling, which is a torus knot of type $(4,5)$.  Right: An embedded tube $f_\alpha(\widetilde{\mathbb{T}}^2)$ (in the notation of Remark \ref{cablerem}) around the center line and the closed stationary curve traced on the tube (in yellow).}} \label{examples}
	\end{center}
\end{figure}

The evolution by the LIEN flow \eqref{LIEN2} of a closed stationary curve can be explicitly described.

\begin{prop} Let $\gamma:J=\R\longrightarrow\AdS$ be a closed stationary curve of the LIEN flow \eqref{LIEN2} with elliptic parameter $\mu\in(0,1)$ and Lam\'e eigenvalues $h_->h_+\in\mathcal{S}_\mu$. Then, the evolution of $\gamma$ by the flow is
$$(s,t)\in \R\times I\subseteq\R^2\longmapsto {\rm Exp}(t\mathfrak{m}_+)\gamma(s+2\ell t){\rm Exp}(-t\mathfrak{m}_-)\in\AdS\,,$$
where $\ell\in\R$ is the constant satisfying \eqref{m} and $\mathfrak{m}_\pm\in {\mathfrak sl}(2,\R)$ are given by
	$$\mathfrak{m}_\pm=\begin{pmatrix} 0 & \frac{8\sqrt{2}(h_\pm-1)(\mu-h_\pm)}{(h_--h_+)^{3/2}} \\ \frac{8\sqrt{2}(h_\pm-1-\mu)}{(h_--h_+)^{3/2}} & 0 \end{pmatrix}.$$
\end{prop}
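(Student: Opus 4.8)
The plan is to realise the LIEN‑evolution of $\gamma$ through the extended‑frame construction of Theorem~\ref{induced}, applied to the traveling‑wave solution of the KdV equation determined by the bending of $\gamma$, and then to exploit the translation symmetry of that solution to split off a one–parameter subgroup in time.

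First I would take the representative of $\gamma$ normalised as in the proof of Theorem~\ref{stationarycurve}, so that its spinor frame $(F_+,F_-)$ satisfies $F_\pm(0)=\begin{pmatrix}1/\sqrt{\sigma} & 0\\ 0 & \sqrt{\sigma}\end{pmatrix}$ with $\sigma=\sqrt{2/(h_--h_+)}$, and so that the bending $\bending$ is the function \eqref{solution}. Since \eqref{solution} solves the stationary equation \eqref{stationary}, the traveling wave $\bending(s+2\ell t)$ is a solution of the KdV equation \eqref{KdV2}; hence, by Theorem~\ref{induced}, the evolution of $\gamma$ by the LIEN flow \eqref{LIEN2} is $\gamma(s,t)=E_1E_{-1}^{-1}$, where $E_{\pm 1}$ are the extended frames of $\bending(s+2\ell t)$ with spectral parameters $\pm1$. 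Normalising the extended frames (defined up to left multiplication) by $E_{\pm1}(0,0)=F_\pm(0)$ and using uniqueness for the linear systems \eqref{dF+}--\eqref{dF-}, one gets $E_{\pm 1}(\cdot,0)=F_\pm$.

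The key step is the ansatz $E_{\pm 1}(s,t)={\rm Exp}(t\,\mathfrak{m}_\pm)\,F_\pm(s+2\ell t)$ with $\mathfrak{m}_\pm\in\mathfrak{sl}(2,\R)$ constant. Put $\xi=s+2\ell t$. Substituting the ansatz into $\partial_sE_{\pm1}=E_{\pm1}\mathcal{K}_{\pm 1}$ merely recovers the spinorial Frenet equation $F_\pm'=F_\pm\mathcal{K}_{\pm 1}$, whereas substituting into $\partial_tE_{\pm1}=E_{\pm1}\mathcal{P}_{\pm 1}$ (noting $\partial_t\xi=2\ell$ and that ${\rm Exp}(t\mathfrak{m}_\pm)$ commutes with $\mathfrak{m}_\pm$) forces $\mathfrak{m}_\pm=F_\pm(\xi)\bigl(\mathcal{P}_{\pm 1}(\xi)-2\ell\,\mathcal{K}_{\pm 1}(\xi)\bigr)F_\pm(\xi)^{-1}$. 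I would then check that the right‑hand side is independent of $\xi$: differentiating it and using $F_\pm'=F_\pm\mathcal{K}_{\pm 1}$ yields $F_\pm\bigl([\mathcal{K}_{\pm 1},\mathcal{P}_{\pm 1}]+\partial_\xi\mathcal{P}_{\pm 1}-2\ell\,\partial_\xi\mathcal{K}_{\pm 1}\bigr)F_\pm^{-1}$, and for a traveling wave one has $\partial_s=\partial_\xi$ and $\partial_t=2\ell\,\partial_\xi$, so the bracket is exactly minus the zero‑curvature expression $\partial_t\mathcal{K}_{\pm 1}-\partial_s\mathcal{P}_{\pm 1}-[\mathcal{K}_{\pm 1},\mathcal{P}_{\pm 1}]$, which vanishes because $\bending(\xi)$ solves \eqref{KdV2}. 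Hence $\mathfrak{m}_\pm$ equals its value at $\xi=0$, namely $\mathfrak{m}_\pm=F_\pm(0)\bigl(\mathcal{P}_{\pm 1}(0)-2\ell\,\mathcal{K}_{\pm 1}(0)\bigr)F_\pm(0)^{-1}$, and
$$\gamma(s,t)=E_1E_{-1}^{-1}={\rm Exp}(t\mathfrak{m}_+)\,F_+(\xi)F_-(\xi)^{-1}\,{\rm Exp}(-t\mathfrak{m}_-)={\rm Exp}(t\mathfrak{m}_+)\,\gamma(s+2\ell t)\,{\rm Exp}(-t\mathfrak{m}_-),$$
which is the asserted formula (and identifies $A(t)={\rm Exp}(t\mathfrak{m}_+)$, $B(t)={\rm Exp}(t\mathfrak{m}_-)$ in the definition of a stationary curve).

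Finally I would compute $\mathfrak{m}_\pm$ explicitly. From \eqref{solution} one reads off $\bending(0)=-(h_++h_-)/(h_--h_+)$, $\bending'(0)=0$ and $\bending''(0)=16\mu/(h_--h_+)^2$; substituting into \eqref{KP}, both $\mathcal{K}_{\pm 1}(0)$ and $\mathcal{P}_{\pm 1}(0)$ are off‑diagonal (since $\bending'(0)=0$), so $\mathcal{P}_{\pm 1}(0)-2\ell\,\mathcal{K}_{\pm 1}(0)$ is off‑diagonal and conjugating it by the diagonal matrix $F_\pm(0)={\rm diag}(1/\sqrt{\sigma},\sqrt{\sigma})$ just multiplies its $(1,2)$‑entry by $1/\sigma$ and its $(2,1)$‑entry by $\sigma$. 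Using the constraint \eqref{m} to eliminate $\ell$, together with $h_-+h_+\mp(h_--h_+)=2h_\pm$, the two entries collapse — a short, routine computation — to $8\sqrt{2}(h_\pm-1)(\mu-h_\pm)/(h_--h_+)^{3/2}$ and $8\sqrt{2}(h_\pm-1-\mu)/(h_--h_+)^{3/2}$, which are the entries of the $\mathfrak{m}_\pm$ in the statement. The one genuinely substantive point is the ansatz, i.e. recognising that the extended frame of a traveling‑wave KdV potential factors as a one‑parameter time‑translation subgroup times the stationary spinor frame, with infinitesimal generator the reduced zero‑curvature matrix $\mathcal{P}_{\pm 1}-2\ell\,\mathcal{K}_{\pm 1}$; everything afterward is bookkeeping, shortened by the normalisation of $(F_+,F_-)$ and by \eqref{m}.
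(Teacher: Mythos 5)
Your proposal is correct and follows essentially the same route as the paper: the heart of both arguments is the conserved quantity $F_\pm\bigl(\mathcal{P}_{\pm 1}-2\ell\,\mathcal{K}_{\pm 1}\bigr)F_\pm^{-1}=\mathfrak{m}_\pm$ (constant by the stationary equation \eqref{stationary}, equivalently the traveling-wave zero-curvature identity), the factorization of the spinor/extended frames as ${\rm Exp}(t\mathfrak{m}_\pm)F_\pm(s+2\ell t)$, and the evaluation of $\mathfrak{m}_\pm$ at $s=0$ with the normalization of Theorem \ref{stationarycurve}, which indeed yields the stated matrices. The only difference is one of direction — you build the evolution from Theorem \ref{induced} and show the extended frames factor, while the paper posits $\widehat{\gamma}(s,t)={\rm Exp}(t\mathfrak{m}_+)\gamma(s+2\ell t){\rm Exp}(-t\mathfrak{m}_-)$ and verifies it solves the LIEN flow — which is an inessential distinction.
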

\begin{proof}
	Assume that $\gamma$ is a closed stationary curve of the LIEN flow \eqref{LIEN2} with $\mu\in(0,1)$ and $h_->h_+$ fixed and let $(F_+,F_-)$ be the spinor frame field along $\gamma$. Then, the bending $\bending$ of $\gamma$ satisfies \eqref{solution} (and, hence, \eqref{stationary} too) for the constant $\ell\in\R$ given by \eqref{m}.  
	
	We begin by proving the following conservation laws
	\begin{eqnarray}
		F_+\left(\mathcal{P}_1-2\ell\,\mathcal{K}_1\right)F_+^{-1}&=&\mathfrak{m}_+\,,\label{momenta+}\\
		F_-\left(\mathcal{P}_{-1}-2\ell\,\mathcal{K}_{-1}\right)F_-^{-1}&=&\mathfrak{m}_-\,,\label{momenta-}
	\end{eqnarray} 
	where $\mathcal{K}_\lambda$ and $\mathcal{P}_\lambda$, $\lambda=1,-1$, are defined in \eqref{KP} and $\mathfrak{m}_\pm$ are constant matrices. The proof of both conservation laws is analogous, hence, we will avoid explicitly writing the sub-indexes. The following computations are valid for both possible sub-indexes.
	
	Define the function $\mathcal{H}(s)=\mathcal{P}(s)+2\ell\,\mathcal{K}(s)$. Using the spinorial Frenet-type equations \eqref{dF+} or \eqref{dF-}, respectively, of $\gamma$, we have
	$$\left(F\,\mathcal{H}\,F^{-1}\right)'=F\left(\mathcal{H}'-\left[\mathcal{H},\mathcal{K}\right]\right)F^{-1}=0\,.$$
	The second equality follows from \eqref{stationary} and the following computation involving the definition of $\mathcal{H}$,
	$$\mathcal{H}'-\left[\mathcal{H},\mathcal{K}\right]=\mathcal{P}'+2\ell\,\mathcal{K}'-\left[\mathcal{P},\mathcal{K}\right]=\begin{pmatrix} 0 & -\bending'''-2\ell\bending'+6\bending\bending' \\ 0 & 0 \end{pmatrix}=0\,.$$
	Consequently, $F\mathcal{H}F^{-1}$ must be constant which shows the conservation laws \eqref{momenta+} and \eqref{momenta-}. Moreover, since the matrices $\mathfrak{m}_\pm$ are constant, it is enough to compute \eqref{momenta+}-\eqref{momenta-} at $s=0$. Using the same normalization for $F_\pm$ employed in the proof of Theorem \ref{stationarycurve} and the derivatives of $\bending$ obtained from \eqref{solution}, we then deduce that the matrices $\mathfrak{m}_\pm$ are as in the statement.
	
	We next show that $\widehat{\gamma}(s,t)={\rm Exp}(t\mathfrak{m}_+)\gamma(s+2\ell t){\rm Exp}(-t\mathfrak{m}_-)$ is a solution of the LIEN flow \eqref{LIEN2}. Observe that, by construction, the spinor frame field $(\widehat{F}_+,\widehat{F}_-)$ along $\widehat{\gamma}$ is given by
	\begin{eqnarray*}
		\widehat{F}_+(s,t)&=&{\rm Exp}(t\mathfrak{m}_+)F_+(s+2\ell t)\,,\\
		\widehat{F}_-(s,t)&=&{\rm Exp}(t\mathfrak{m}_-)F_-(s+2\ell t)\,.
	\end{eqnarray*}
	As in the previous part of the proof, we need to work with both possible sub-indexes, but again the computations are analogous. Thus, we will again avoid writing the sub-indexes and the computations will be valid for both cases. Differentiating the components of the spinor frame field along $\widehat{\gamma}$, we obtain
	\begin{eqnarray*}
		d\widehat{F}&=&{\rm Exp}(t\mathfrak{m})F'ds+\left(\mathfrak{m}{\rm Exp}(t\mathfrak{m})F+2\ell\,{\rm Exp}(t\mathfrak{m})F'\right)dt\\
		&=&{\rm Exp}(t\mathfrak{m})\left[F'ds+\left(\mathfrak{m}F+2\ell F'\right)dt\right].
	\end{eqnarray*}
Consequently, from the spinorial Frenet-type equations of $\gamma$ \eqref{dF+}-\eqref{dF-} and the conservation laws \eqref{momenta+}-\eqref{momenta-},
\begin{eqnarray*}
	\widehat{F}^{-1}d\widehat{F}&=&F^{-1}{\rm Exp}(-t\mathfrak{m})d\widehat{F}=F^{-1}F'ds+\left(F^{-1}\mathfrak{m}F+2\ell\,F^{-1}F'\right)dt\\
	&=&\mathcal{K}\,ds+\mathcal{P}\,dt\,.
\end{eqnarray*}
Therefore, using the expression of $\pi_s^*(\omega)$ given in Section 2, the Cartan frame field $\widehat{\mathcal{F}}$ along $\widehat{\gamma}$ evolves according to
$$\partial_t\widehat{\mathcal{F}}=\widehat{\mathcal{F}}\begin{pmatrix} 0 & -4\sqrt{2} & 0 & -2\sqrt{2}\bending \\ -2\sqrt{2}\bending & -2\bending' & \sqrt{2}\left(-4+2\bending^2-\bending''\right) & 0 \\ 0 & 2\sqrt{2}\bending & 0 & -\sqrt{2}\left(-4+2\bending^2-\bending''\right) \\ -4\sqrt{2} & 0 & -2\sqrt{2}\bending & 2\sqrt{2}\bending \end{pmatrix},$$
where $\bending\equiv\bending(s+2\ell t)$. Hence, $\partial_t\widehat{\gamma}=-2\sqrt{2}\left(\bending T-2B\right)$, which shows that $\widehat{\gamma}$ is a solution of the LIEN flow \eqref{LIEN2} with bending $\bending\equiv\bending(s+2\ell t)$ and initial condition the stationary curve $\gamma(s)$. This finishes the proof.
\end{proof}

\begin{remark} \emph{Equation \eqref{stationary} can be obtained as the Euler-Lagrange equation for the functional
	$$ \gamma\longmapsto \int_\gamma \left(\bending+2\ell \right)ds\,.$$
This variational problem has been studied in the last couple of decades by several authors (see for instance \cite{AGL,ABG, MN1,MN1Bis} and references therein).}
\end{remark}

\section{KKSH-Solutions} 

In this section (more precisely, in Subsection 6.2) we will investigate the solutions of the LIEN flow \eqref{LIEN2} arising from the 3-parameter family of solutions of the KdV equations studied by Kevrekidis, Khare, Saxena and Herring in \cite{KKSH} (see also \cite{AMP}). These functions depend on two elliptic parameters $\mu,\tau \in (0,1)$ and a homothetic parameter $h>0$. Unlike the stationary case, the Hill's equations that need to be solved for the construction of the corresponding evolution of null curves cannot be written in an exact form in terms of known special functions. Therefore our analysis is essentially based on the numerical solutions of such equations. For this purpose, we will use symbolic and numerical routines implemented in the software \emph{Mathematica 13.3}. We begin by recalling how the KKSH-solutions of the KdV equation \eqref{KdV2} are constructed (Subsection 6.1).

\subsection{KKSH-Solutions of the KdV} 

Let $\mu,\tau \in (0,1)$ and $h>0$ be three real numbers and define the function $\phi(s,t)=\sqrt[4]{\mu\,\tau\,}f_+(s,t)f_-(s,t)$, where $ f_\pm$ are the traveling waves given by
\begin{eqnarray*}
f_+(s,t)&=&{\rm sn}\left(h\left(s+h^2\left(1+\mu+3\sqrt{\mu/\tau}(1+\tau)\right)t\right),\mu\right),\\
f_-(s,t)&=&{\rm sn}\left(\sqrt[4]{\mu/\tau}\, h\left(s+h^2\left(\sqrt{\mu/\tau}(1+\tau)+3(1+\mu)\right)t\right),\tau\right).
\end{eqnarray*}
Then the function $u=-2\partial_s\left(\arctanh\,\phi\right)$ is a solution of the defocusing mKdV equation 
$$\partial_tu-6u^2\partial_su+\partial^3_su=0\,.$$ 
It then follows that, using the Miura transformation (\cite{Mi}), the function defined by
\begin{equation}\label{f}
	\bending=\partial_s u+u^2\,,
\end{equation}
is a solution of the KdV equation \eqref{KdV2}.  

\begin{defn}
The solution $\bending$ of the KdV given in \eqref{f} is called the \emph{KKSH-solution with elliptic parameters $\mu,\tau\in(0,1)$ and homothetic parameter $h>0$}.  
\end{defn}

\begin{remark} \emph{Observe that if $\mu=\tau$ the KKHS-solution $\bending$ is a traveling wave solution. Therefore, from now on we assume that $\mu\neq \tau$.}
\end{remark}  

The solution $\bending$ given in \eqref{f} is periodic in $s$, with least period $\rho=4m/h K(\mu)$, if and only if there exist relatively prime integers $m$ and $n$ such that
$$\sqrt[4]{\mu}\,m\,K(\mu)=\sqrt[4]{\tau}\,n\,K(\tau)\,.$$ 
This represents the implicit equation of a curve ${\mathcal C}_{m,n}$ of $(0,1)\times (0,1)$. It turns out that this curve is the graph of a function. In fact,
$g:\tau \in (0,1)\longmapsto \sqrt[4]{\tau}\,K(\tau)\in (0,\infty)$ is a strictly increasing diffeomorphism. Therefore, $\bending$ is periodic in $s$ if and only if $\tau = \tau_{m,n}(\mu)=g^{-1}\left(m/n\sqrt[4]{\mu}\,K(\mu)\right)$.   Then, ${\mathcal C}_{m,n}$ is the graph of the function $\tau_{m,n} :\mu\in (0,1)\longmapsto \tau_{m,n}(\mu)\in (0,1)$. 

\begin{remark} \emph{Note that $g^{-1}$ can be evaluated solving numerically the Cauchy problem
$$y'=\frac{4(y-1)y^{3/4}}{(1-y)K(y)-2E(y)}\,, \quad\quad\quad y\left(\frac{K(1/2)}{\sqrt[4]{2}}\right)=1/2\,,
$$
where $E$ is the complete elliptic integral of the second kind.}
\end{remark}

\begin{defn} The KKHS-solution periodic in $s$ with $\tau=\tau_{m,n}(\mu)$ will be denoted by $\bending_{m,n}$. This solution is called the \emph{KKSH-solution with elliptic parameter $\mu\in(0,1)$, homothetic parameter $h>0$ and quantum numbers $m$ and $n$}.
\end{defn}  

The solution $\bending$ given in \eqref{f} is periodic in $t$ if and only if there exist relatively prime integers $p$ and $r$ such that
$$\sqrt[4]{\mu/\tau}\left((1+\tau)\sqrt{\mu/\tau} + 3(1+\mu)\right)p\,K(\mu)=\left(1+\mu+3(1+\tau)\sqrt{\mu/\tau}\right)r\,K(\tau)\,.
$$
Denote by ${\mathcal D}_{p,r}\subset (0,1)\times (0,1)$ the curve defined by the above implicit equation. If $m/n$ and $p/r$ are rational numbers such that
${\mathcal D}_{p,r}\cap {\mathcal C}_{m,n}\neq \emptyset$, then for every $(\mu,\tau)\in  {\mathcal D}_{p,r}\cap {\mathcal C}_{m,n}$, the  KKSH-solution $\bending$ is doubly-periodic. 

\begin{remark} \emph{Our numerical experiments strongly support the hypothesis that the set $\{(m/n,p/r)\in {\mathbb Q}^2\,\vert\, {\mathcal D}_{p,r}\cap {\mathcal C}_{m,n}\neq \emptyset\}$ has infinite countably many elements. This would imply that there are infinite countably many 1-parameter families of KKSH-solutions that are doubly periodic (in which case the homothetic parameter is the free parameter of each one of these families).}
\end{remark}

\subsection{KKSH-Solutions of the LIEN Flow} 

Let $m$ and $n$ be two relatively prime integers and consider the KKSH-solution $\bending_{m,n}$, which is periodic in $s$, with elliptic parameter $\mu\in(0,1)$, homothetic parameter $h>0$, and quantum numbers $m$ and $n$. Denote by $\gamma_{m,n}$ the corresponding solution of the LIEN flow \eqref{LIEN2} and assume that $\gamma_{m,n}$ is normalized by $F_+(0,0)=F_-(0,0)={\rm Id}$, where $(F_+,F_-)$ is the spinor frame field along $\gamma_{m,n}$.

According to Theorem \ref{induced}, the solution $\gamma_{m,n}$ of the LIEN flow \eqref{LIEN2} can be constructed as $\gamma_{m,n}=E_1E_{-1}^{-1}$, where $E_\lambda$, $\lambda=-1,1$, are the extended frames of $\bending_{m,n}$ with spectral parameters $\lambda=-1,1$. Observe that from the proof of Theorem \ref{induced}, it follows that $(E_1,E_{-1})$ is the spinor frame field along $\gamma_{m,n}$. Hence, with the current notation, $E_1=F_+$ and $E_{-1}=F_-$.

Therefore, from a theoretical point of view, to evaluate the evolution, we need to obtain the extended frames $F_\pm$ of $\bending_{m,n}$, by solving \eqref{dE}. That is, in a first step we need to solve the linear systems
\begin{eqnarray}
	\frac{dA_+}{dt}&=&A_+\begin{pmatrix} -\partial_s \bending&-\partial_s^2 \bending+2\bending^2-2\bending-4\\
2\bending-4&\partial_s \bending\end{pmatrix}_{\lvert_{(0,t)}}\,,\label{system11}\\
\frac{dA_-}{dt}&=&A_-\begin{pmatrix} -\partial_s \bending&-\partial_s^2 \bending+2\bending^2+2\bending-4\\
	2\bending+4&\partial_s \bending\end{pmatrix}_{\lvert_{(0,t)}}\,,\label{system12}
\end{eqnarray}
with the initial conditions $A_+(0)=A_-(0)={\rm Id}$. (For simplicity, we are simply writing $\bending$ to denote $\bending_{m,n}$.) The second step is to solve, for every $t\in \R$, the linear systems
\begin{eqnarray}
	\frac{dF_+}{ds}&=&F_+ \begin{pmatrix} 0& \bending+ 1\\
1&0\end{pmatrix}_{\lvert_{(s,t)}}\,,\label{system21}\\
\frac{dF_-}{ds}&=&F_- \begin{pmatrix} 0& \bending- 1\\
	1&0\end{pmatrix}_{\lvert_{(s,t)}}\,,\label{system22}
\end{eqnarray}
with initial conditions $F_+(0,t)=A_+(t)$ and $F_-(0,t)=A_-(t)$, respectively. Then, $\gamma_{m,n}(s,t)=F_+(s,t)F_-^{-1}(s,t)$ it the normalized solution of the LIEN flow with bending $\bending_{m,n}$.   

\begin{remark}\emph{Note that \eqref{system21}-\eqref{system22} is equivalent to the four Hill's equations
\begin{equation}\label{Hill}
	x_{\pm}''=\left(\bending_{m,n}\pm 1\right)x_{\pm}\,, \quad\quad\quad y_{\pm}''=\left(\bending_{m,n}\pm 1\right)y_{\pm}\,.
\end{equation}
Even though we do not have exact solutions, the numerical integration does not presents problems. However, the system \eqref{system11}-\eqref{system12} may present critical issues from a numerical point of view. Nonetheless, if the two monodromy matrices are non-trivial and diagonalizable, the problem can be overcome by first solving \eqref{system21}-\eqref{system22} with initial conditions $F_{\pm}(0,t)={\rm Id}$ and then determining, just with algebraic manipulations, the solutions of \eqref{system11}-\eqref{system12}. In particular, it is essential to determine the type of orbit of the evolution (see Subsection \ref{OrbitType}).}
\end{remark}

Keeping in mind Proposition \ref{CauchyP}, in order to determine the type of orbit of $\gamma_{m,n}$ it suffices to consider the initial curve $\widehat{\gamma}_{m,n}(s)=\gamma_{m,n}(s,0)$. This curve and its spinor frame field can be evaluated solving numerically the Hill's equations (\ref{Hill}) with $t=0$ and initial conditions $x_{\pm}(0)=1, x'_{\pm}(0)=0$,  $y_{\pm}(0)=0$, and $y'_{\pm}(0)=0$. Let $(\widehat{F}_+,\widehat{F}_-)$ be the spinor frame field along $\widehat{\gamma}_{m,n}$ and consider the monodromy
$\widehat{M}=(\widehat{F}_+(\rho),\widehat{F}_{-}(\rho))$, where $\rho=4m/h\,K(\mu)$ is the least period of the bending of $\widehat{\gamma}_{m,n}$ (see Subsection 6.1). We next compute the invariant ${\rm I}_\pm={\rm tr}\widehat{F}_\pm(\rho)^2-4$. Then, $\widehat{F}_\pm(\rho)$ is elliptic if ${\rm I}_\pm<0$, hyperbolic if ${\rm I}_\pm>0$, and parabolic if ${\rm I}_\pm=0$ and $\widehat{F}_\pm(\rho)\neq \pm{\rm Id}$. In particular, if  ${\rm I}_\pm\neq 0$ the monodromy matrices are diagonalizable.
 
\begin{ex} Fix the real numbers $m=1$, $n=6$ and $h=2$, and consider the $1$-parameter family $\{\bending_{1,6}\}_\mu$ of KKSH-solutions with elliptic parameter $\mu\in(0,1)$, homothetic parameter $h=2$, and quantum numbers $m=1$ and $n=6$. Recall that these solutions are periodic in $s$ and, in this case, the least period of $\bending_{1,6}$ is $\rho=2K(\mu)$.
	
Since $\rho$ depends on $\mu$, the invariants ${\rm I}_\pm$ introduced above are functions ${\rm I}_{\pm}:\mu \in (0,1) \longmapsto {\rm I}_\pm(\mu)\in\R$. In Figure \ref{INIP} we represent the graphs of these functions. From these representations we deduce that the type of orbit of the KKSH-solutions $\gamma_{1,6}$ of the LIEN flow \eqref{LIEN2} with bending $\bending_{1,6}$ is $(H,E)$ (for every $\mu\in(0,1)$).

\begin{figure}[h]
	\begin{center}
		\makebox[\textwidth][c]{
			\includegraphics[height=4cm,width=6cm]{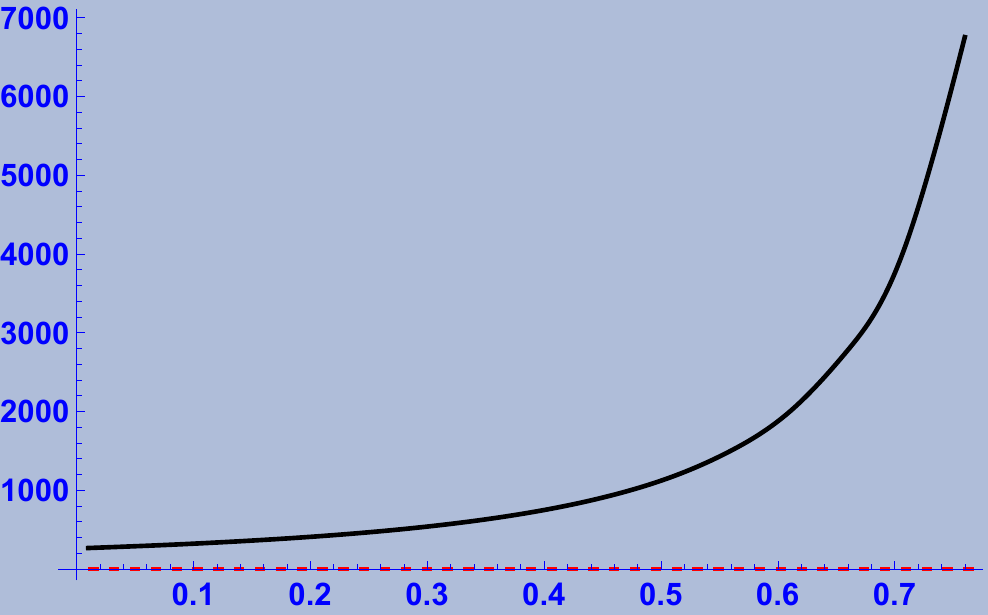}\quad\quad
			\includegraphics[height=4cm,width=6cm]{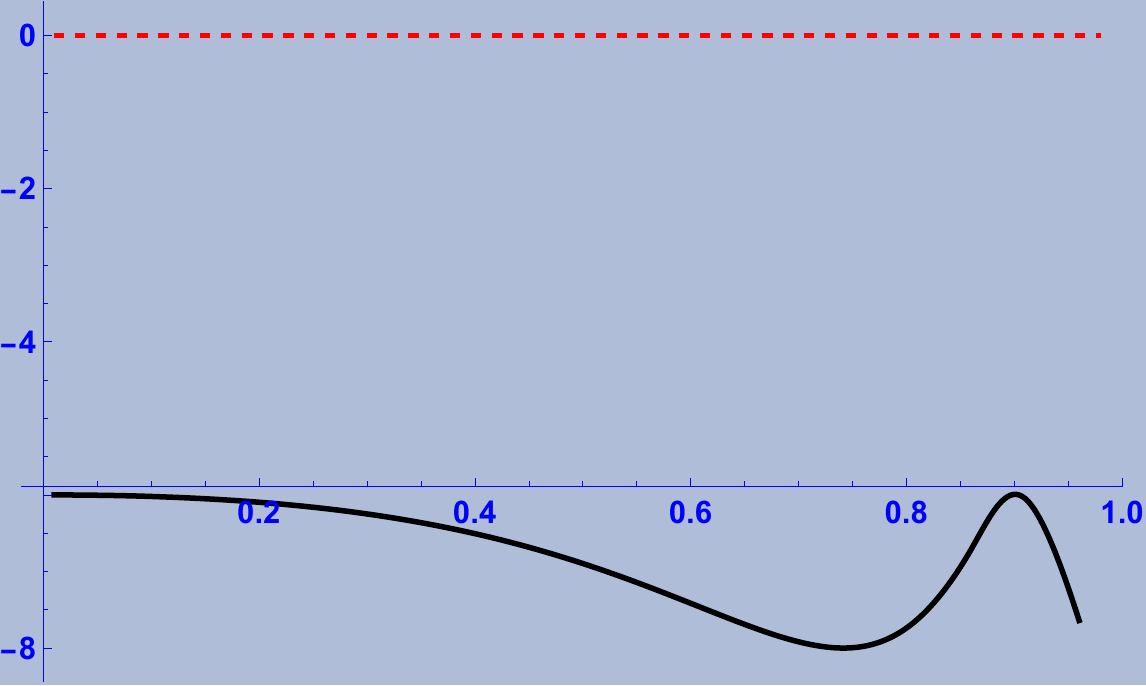}
		}
		\caption{\small{Left: The graph of the function ${\rm I}_+$. Right: The graph of the function ${\rm I}_-$. The domain illustrated in the images is $(0.01,0.98)$.}} \label{INIP}
	\end{center}
\end{figure}

Next, we find the elliptic parameter $\mu_*\in(0,1)$ such that the eigenvalues of the monodromy matrix $\widehat{F}_-(\rho)$ are $e^{i\pm 2\pi/3}$. To this end we evaluate the function
$${\mathtt p}_{2/3} : \mu\in (0,1)\longmapsto \frac{1}{2}{\mathfrak R}\left({\rm tr}\widehat{F}_-(\rho)+\sqrt{{\rm I}_-}\right)-\cos(2\pi/3)\in\R\,,$$
and compute numerically its unique zero, namely, $\mu_*\simeq 0.61500934$ (see Figure \ref{TAVOLA3}).

 \begin{figure}[h]
	\begin{center}
		\makebox[\textwidth][c]{
			\includegraphics[height=4cm,width=6cm]{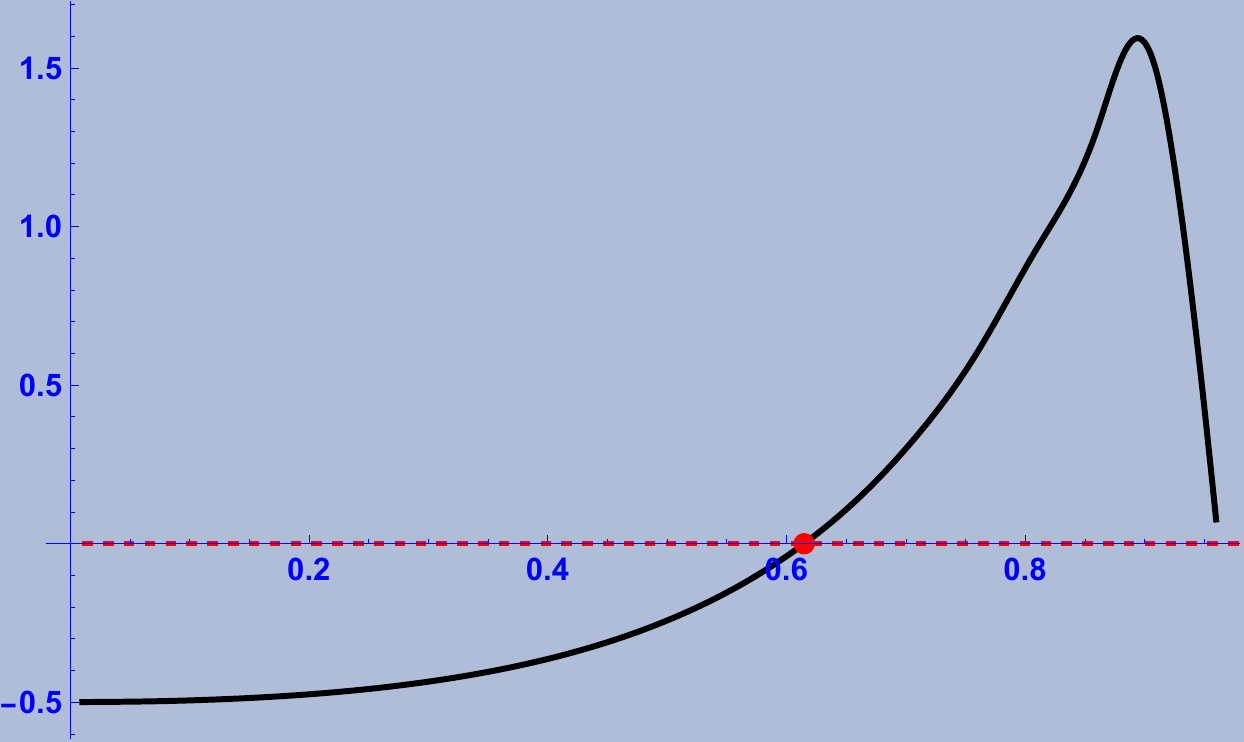}}
		\caption{\small{The graph of the function ${\mathtt p}_{2/3}$. The only zero of this function is $\mu_*\simeq 0.61500934$ (in red).}} \label{TAVOLA3}
	\end{center}
\end{figure}

We now fix the elliptic parameter $\mu=\mu_*$ and let $\bending_{1,6}$ be the corresponding KKSH-solution of the KdV equation (for this particular value $\mu_*\in(0,1)$). This function $\bending_{1,6}$ is periodic in $s$ with least period $\rho\simeq 3.93225$. In Figure \ref{KKSH} we show the graphs of the functions $\bending_{1,6}(s,t_j)$ over the domain $[0,\rho]$, for the values $t_0=0$, $t_1\simeq 0.537285$, $t_2= 2t_1$ and $t_3= 3t_1$.
 
\begin{figure}[h]
 	\begin{center}
 		\makebox[\textwidth][c]{
 			\includegraphics[height=4cm,width=6cm]{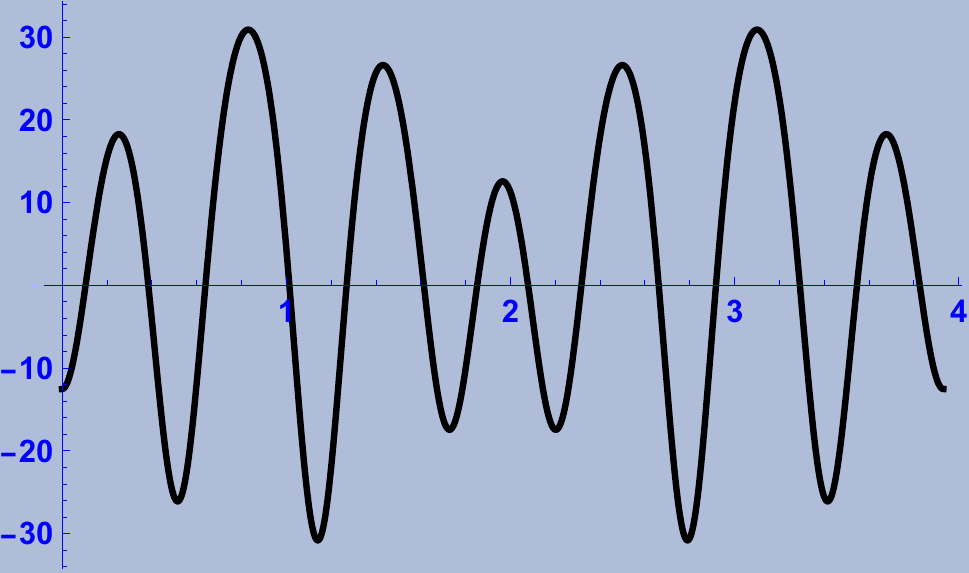}\quad\quad
 			\includegraphics[height=4cm,width=6cm]{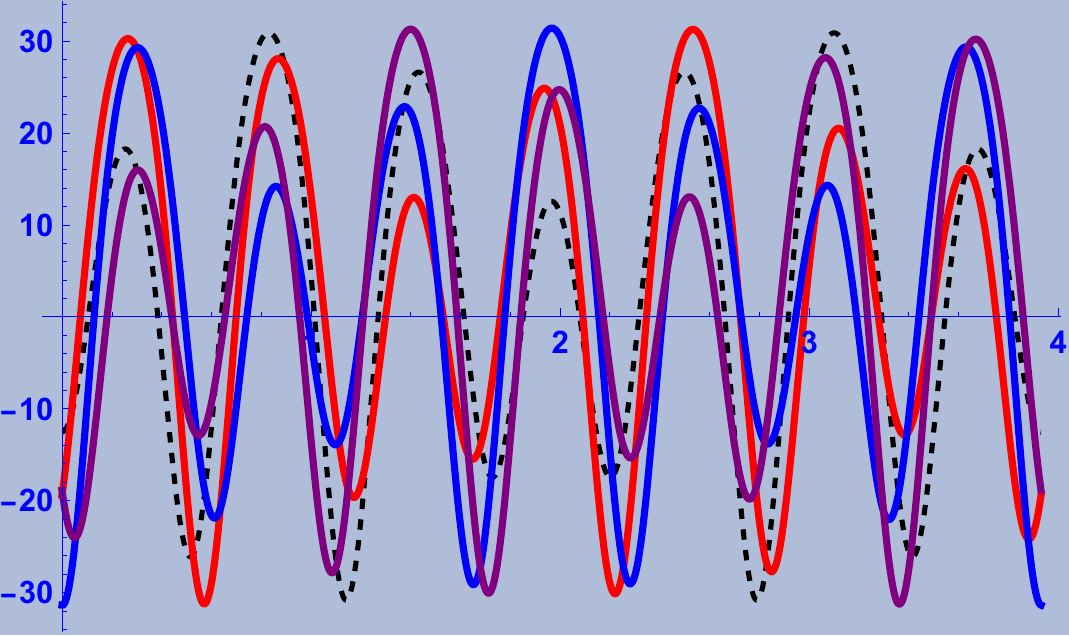}
 		}
 		\caption{\small{Left: The graph of the function $\bending_{1,6}(s,0)$. Right: The graphs of the functions $\bending_{1,6}(s,t_j)$, for $j=1,2,3$, (in red, blue, and purple, respectively.)}} \label{KKSH}
 	\end{center}
 \end{figure} 

Denote by $\gamma_{1,6}$ the solution of the LIEN flow \eqref{LIEN2} with bending $\bending_{1,6}$ (for the fixed value $\mu=\mu_*$ of the elliptic parameter) and by $(\eta_+,\eta_-)$ the pair of star-shaped cousins associated with $\gamma_{1,6}$. Figures \ref{EV1}, \ref{EV2}, \ref{EV3} and \ref{EV4} depict the null curves $\gamma_{1,6}(s,t_j)$ and the associated pair of cousins. The part of the null curves colored in red are the arcs $\gamma_{1,6}(s,t_j)$ where $s\in[-\rho/2,\rho/2]$, while the points colored in cyan are $\gamma_{1,6}(-\rho/2,t_j)$ and $\gamma_{1,6}(\rho/2,t_j)$. The parts colored in purple are $\gamma_{1,6}(s,t_j)$ with $s\in(-\infty, -\rho/2]$ and $s\in[\rho/2,\infty)$. When $s>\rho/2$, the curve $\gamma_{1,6}(s,t_j)$ is trapped into a small tubular neighborhood of an asymptotic null curve ${\mathcal C}^+$ of the ideal boundary (see Remark \ref{CE}). Similarly, when  $s<-\rho/2$,   $\gamma_{1,6}(s,t_j)$ is trapped into a small tubular neighborhood of a second asymptotic null curve ${\mathcal C}^-$ of the ideal boundary. On the other hand, the arcs $\gamma_{1,6}(s,t)$ where $s\in[-\rho/2,\rho/2]$ undergo non-trivial deformations. The trajectories of the evolving curves $\gamma_{1,6}$ have a stabilizer of infinite order, spanned by the monodromy $\widehat{M}=(\widehat{F}_+(\rho),\widehat{F}_-(\rho))$, where $\widehat{F}_-(\rho)$ is a clockwise rotation of $2\pi/3$ around the origin and $\widehat{F}_+(\rho)$ is the hyperbolic element
$$\widehat{F}_+(\rho)\simeq \begin{pmatrix}32.13972944617541&31.723219516279162\\32.5231&32.1327 \end{pmatrix}\in \SL\,,
$$
with eigenvalues $\zeta_1\simeq 64.26$ and $\zeta_2=1/\varrho_1$.
\end{ex}

 \begin{figure}[h]
	\begin{center}
		\makebox[\textwidth][c]{
			\includegraphics[height=4cm,width=4cm]{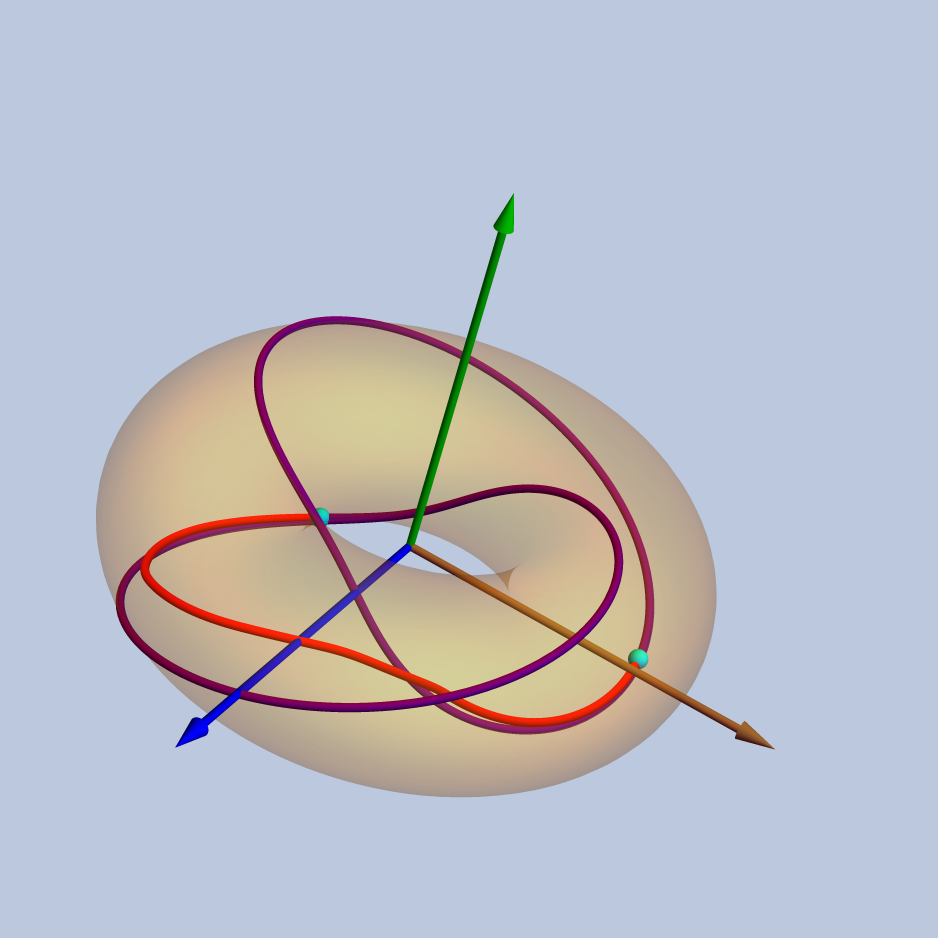}\quad\quad\quad
			\includegraphics[height=4cm,width=4cm]{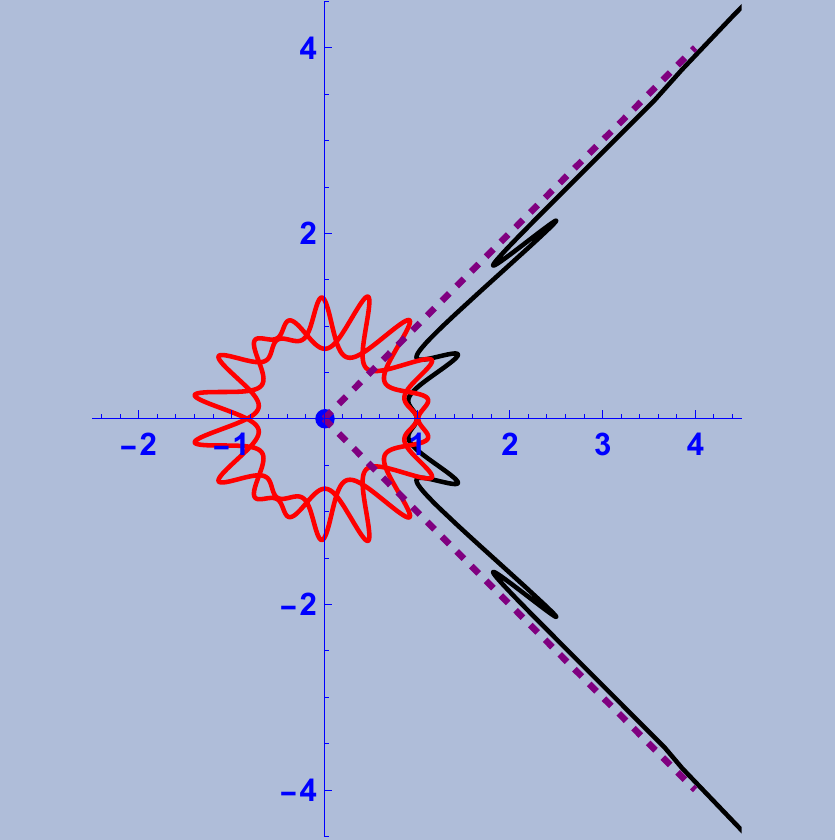}
		}
		\caption{\small{Left: The null curve $\gamma_{1,6}(s,t_0)$. Right: The pair of star-shaped cousins $(\eta_+,\eta_-)$ in black and red, respectively.}} \label{EV1}
	\end{center}
\end{figure}

 \begin{figure}[h]
	\begin{center}
		\makebox[\textwidth][c]{
			\includegraphics[height=4cm,width=4cm]{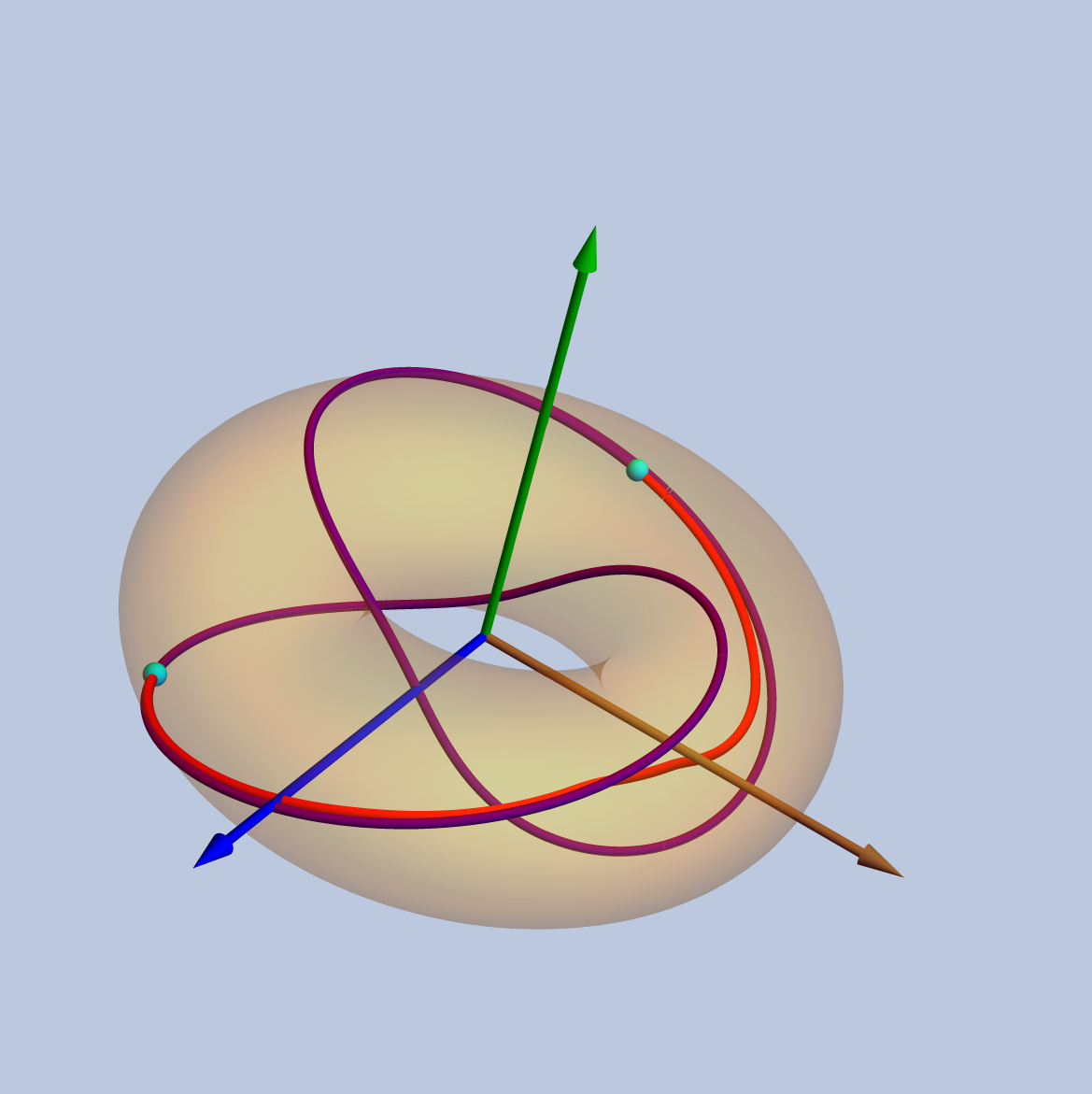}\quad\quad\quad
			\includegraphics[height=4cm,width=4cm]{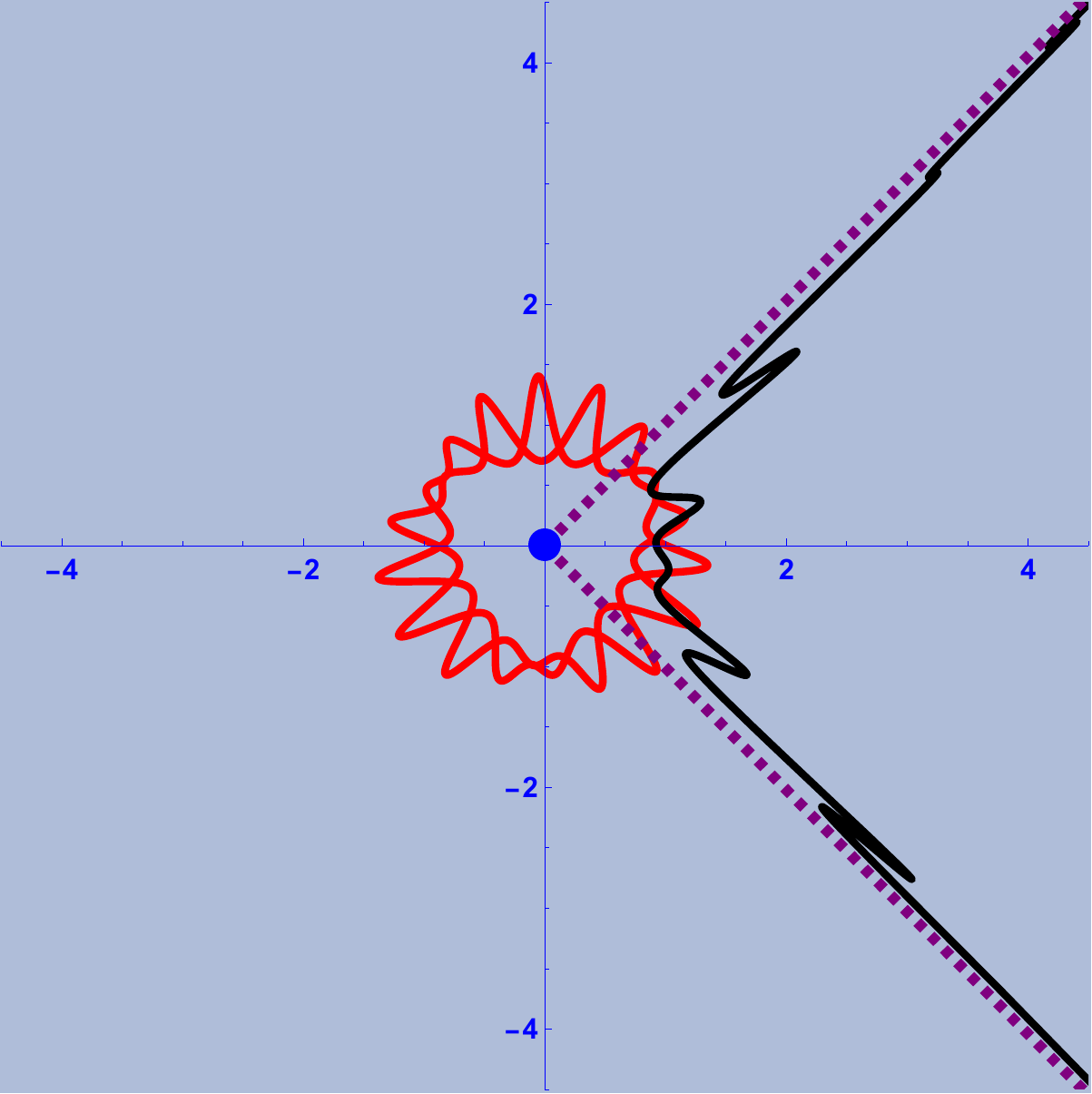}
		}
		\caption{\small{Left: The null curve $\gamma_{1,6}(s,t_1)$. Right: The pair of star-shaped cousins $(\eta_+,\eta_-)$ in black and red, respectively.}} \label{EV2}
	\end{center}
\end{figure}

 \begin{figure}[h]
	\begin{center}
		\makebox[\textwidth][c]{
			\includegraphics[height=4cm,width=4cm]{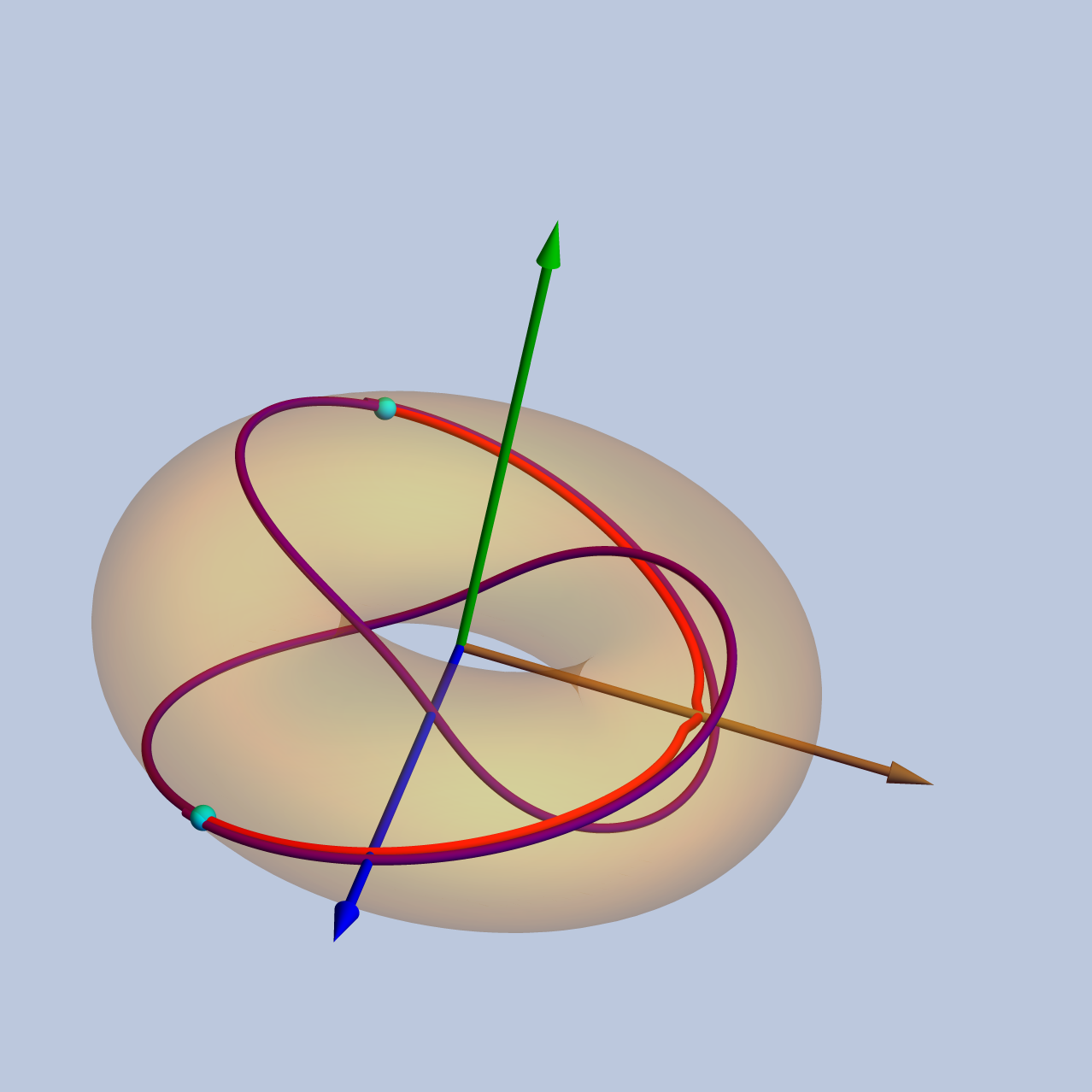}\quad\quad\quad
			\includegraphics[height=4cm,width=4cm]{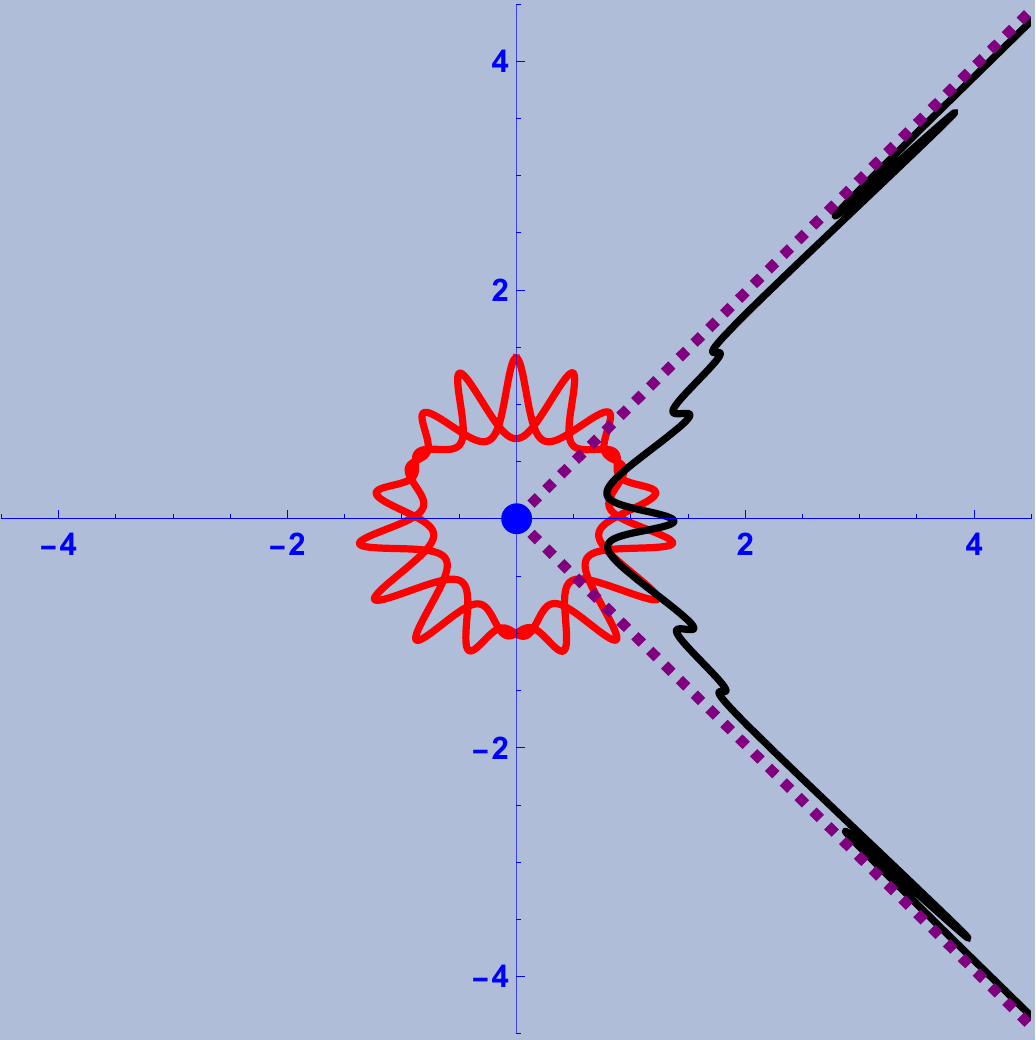}
		}
		\caption{\small{Left: The null curve $\gamma_{1,6}(s,t_2)$. Right: The pair of star-shaped cousins $(\eta_+,\eta_-)$ in black and red, respectively.}} \label{EV3}
	\end{center}
\end{figure}

 \begin{figure}[h]
	\begin{center}
		\makebox[\textwidth][c]{
			\includegraphics[height=4cm,width=4cm]{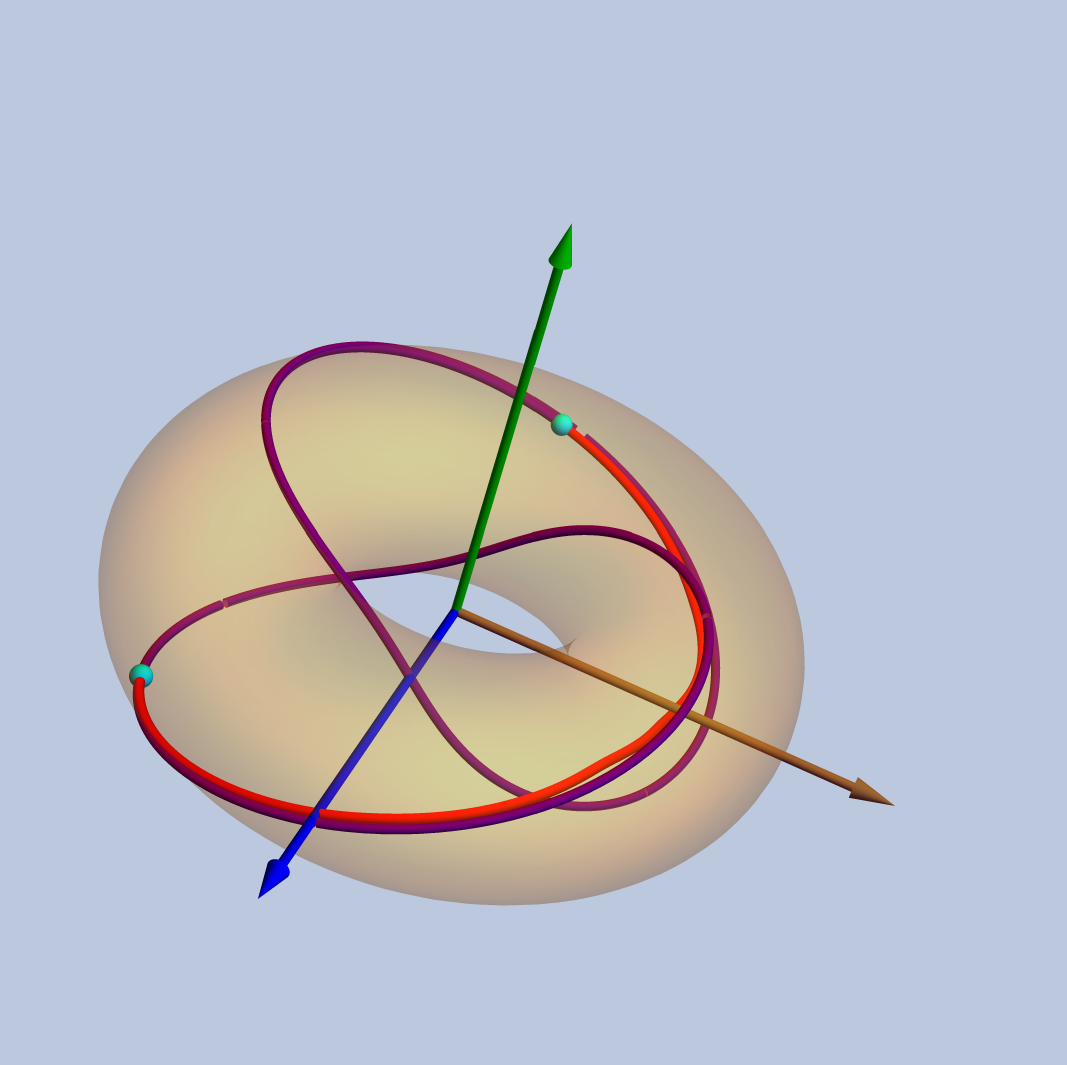}\quad\quad\quad
			\includegraphics[height=4cm,width=4cm]{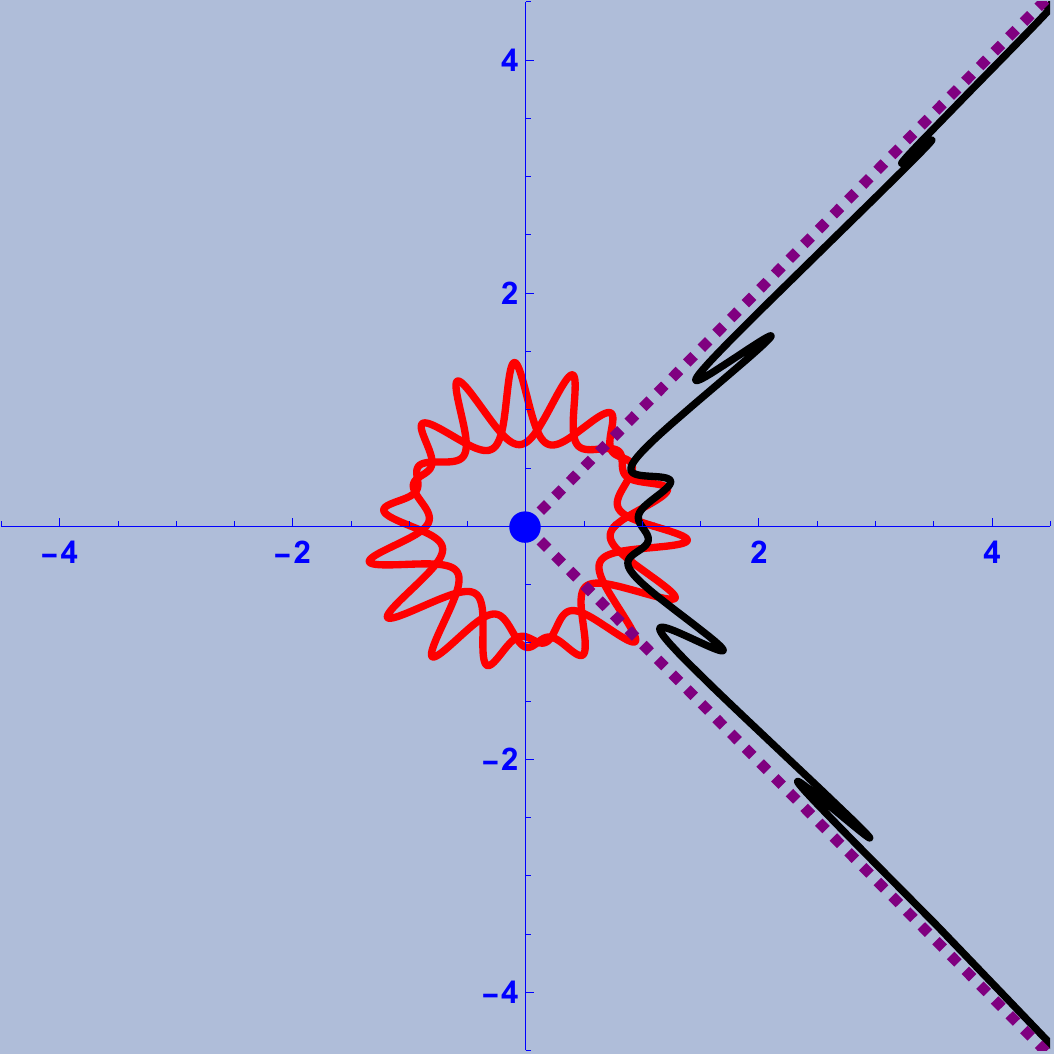}
		}
		\caption{\small{Left: The null curve $\gamma_{1,6}(s,t_3)$. Right: The pair of star-shaped cousins $(\eta_+,\eta_-)$ in black and red, respectively.}} \label{EV4}
	\end{center}
\end{figure} 

\section{Discussion}

To summarize, we have shown that in the context of Lorentzian geometry in $\AdS$, there is a flow (the LIEN flow) on null curves that induces the bending evolution by the KdV equation. Using the specific geometry of $\AdS$ we have been able to relate the LIEN flow to the Pinkall flows for star-shaped curves in the centro-affine plane. 

We have carried out a detailed analysis of curves which are stationary (ie. whose flows are congruent with respect to the restricted automorphism group of $\AdS$ to the initial curve), identifying closure conditions and obtaining a complete description of periodic stationary curves via an analysis of the Floquet spectrum of the Lam\'e equation of order one. We have also showed that the periodic stationary curves can be explicitly described in terms of Jacobi's elliptic functions and special types of Heun functions.

Next, we have investigated a $3$-parameter family of solutions of the LIEN flow. Contrary to the case of stationary solutions, in this case the Hill's equation arising when studying the corresponding evolution of null curves cannot be explicitly solved in terms of known special functions. However, we have carried out an analysis of these solutions based on the numerical solutions of such Hill's equation.

These results naturally suggest further questions and directions for research. For instance, one can investigate if the finite-gap solutions of the Pinkall flows obtained by Calini, Ivey and Mar\'i-Beffa (\cite{CIB2}) can be used to obtain finite gap solutions of the LIEN flow.

In addition, in Appendix B we will see that, in fact, there are flows (higher order LIEN flows) on null curves that induce bending evolution by any integrable PDE in the KdV hierarchy. This hierarchy allows a bi-Hamiltonian formulation and, hence, it is reasonable to ask if it is possible to extend to the context of null curves in $\AdS$ the bi-Hamiltonian structure on the space of star-shaped curves or on the space of null curves in Lorentz-Minkowski $(2+1)$-space studied by Terng and Wu (\cite{TW1}), Tabachnikov (\cite{T}), and Amor, Gim\'enez and Lucas (\cite{AGL}).

Finally, as a completely integrable PDE the KdV equation has a rich structure which also includes a B\"{a}cklund transformation (\cite{TU}) which generates new solutions from old. In a future paper, the authors will use the Tabachnikov's transformation on star-shaped curves (\cite{T}) to find a geometric transformation on null curves that corresponds to the B\"{a}cklund transformation for the respective bendings.

\section*{Appendix A. Null Curves with Constant Bending}

Let $\gamma:J=\R\longrightarrow\AdS$ be a null curve with constant bending $\bending$. Although our curves are contained in the interior of the torical model of $\AdS$, they may approach asymptotically the ideal boundary of this model, that is, $\partial\mathbb{T}$ (see Remark \ref{CE}).

Let $(\eta_+,\eta_-)$ be the pair of cousins associated with the null curve $\gamma$ with constant bending $\bending$. From Theorem \ref{relation}, it follows that the curvatures $\curvature_\pm$ of $\eta_\pm$ are also constant and so, we can explicitly solve the spinorial Frenet-type equations of $\gamma$, \eqref{dF+} and \eqref{dF-}. Consequently, we can obtain the explicit parameterizations of $\eta_\pm$ by taking the first column vector of $F_\pm$, respectively. Moreover, depending on the value of $\bending$ and, hence, on the values of $\curvature_\pm$, we have different possible types for the star-shaped curves $\eta_\pm$ as well as different possible combinations for the pair of cousins associated with $\gamma$. Indeed, there are five possible cases:
\begin{enumerate}
	\item Case $\bending<-1$. This case was explicitly described in Example \ref{example}. We recall here that the pair of cousins is composed by two ellipses while the null curve $\gamma$ itself is the orbit of a $1$-parameter group of isometries of type $(E,E)$. Hence, the components of the spinor frame field $F_\pm$ of $\gamma$ are periodic. Moreover, when their least periods are commensurable the null curve $\gamma$ is closed. See Figures \ref{F1} and \ref{F2}.
	\item Case $\bending=-1$. In this case, the pair of cousins associated with $\gamma$ consists of a line $\eta_+$ and an ellipse $\eta_-$. The null curve $\gamma$ is the orbit of a $1$-parameter group of isometries of type $(P,E)$. Consequently, the null curve $\gamma$ cannot be closed but, in the torical model for $\AdS$, it tends asymptotically to a null curve of the ideal boundary $\partial\mathbb{T}$. See Figure \ref{F3}.
	\begin{figure}[h]
		\begin{center}
			\makebox[\textwidth][c]{
				\includegraphics[height=4cm,width=4cm]{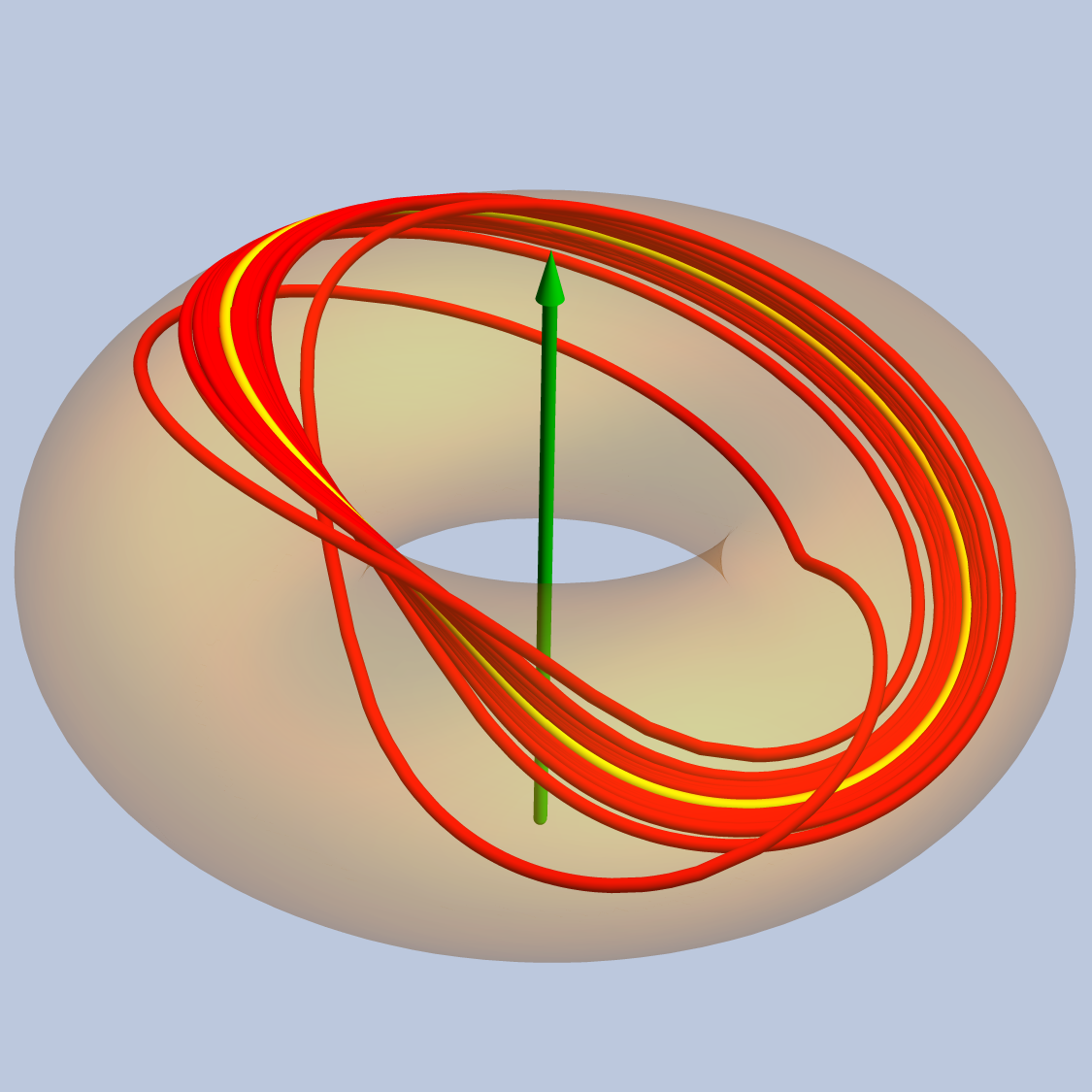}\quad\quad\quad
				\includegraphics[height=4cm,width=4cm]{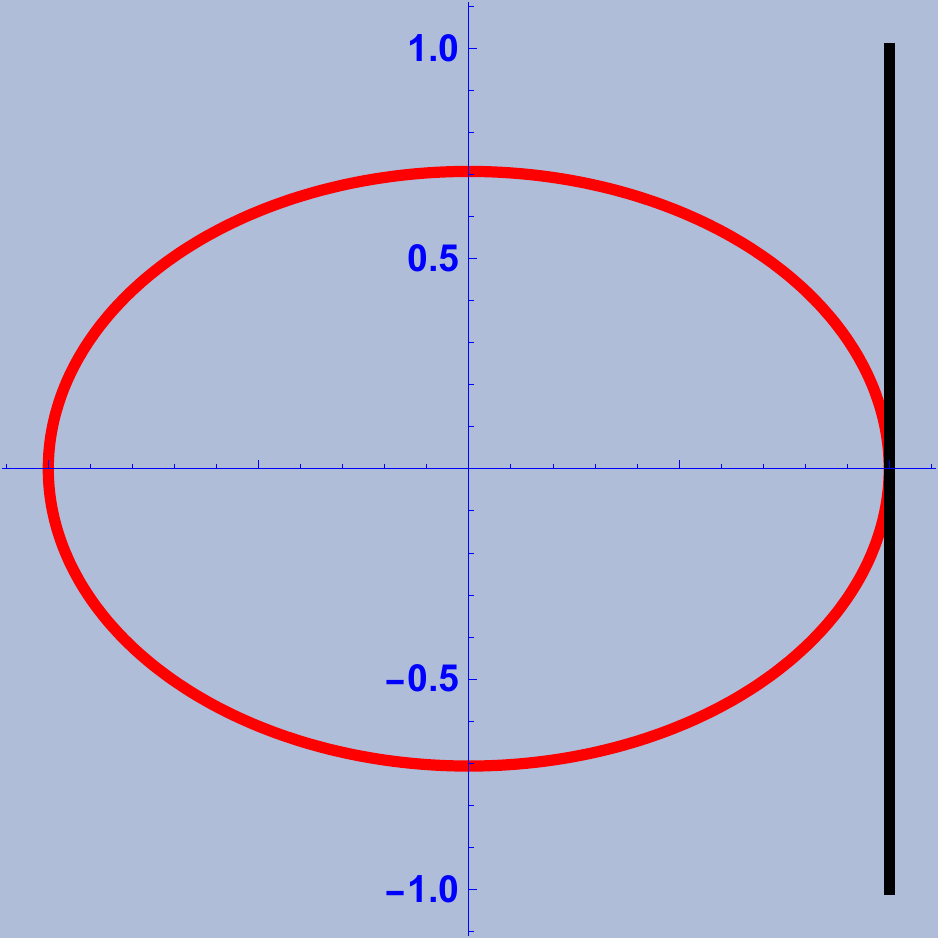}
			}
			\caption{\small{Left: The null curve $\gamma$ with constant bending $\bending=-1$ in the torical model for $\AdS$ (in red) and the limiting null curve (in yellow). Right: The pair of star-shaped cousins $(\eta_+,\eta_-)$ in black and red, respectively.}} \label{F3}
		\end{center}
	\end{figure}
	\item Case $-1<\bending<1$. The pair of cousins consists of a branch of an hyperbola $\eta_+$ and an ellipse $\eta_-$. The null curve $\gamma$ is the orbit of a $1$-parameter group of isometries of type $(H,E)$ and so it cannot be closed. It tends asymptotically to two null curves of the boundary $\partial\mathbb{T}$. Unlike in the case $\bending=-1$, the limiting curves are different. See Figure \ref{F4}.
	\begin{figure}[h]
		\begin{center}
			\makebox[\textwidth][c]{
				\includegraphics[height=4cm,width=4cm]{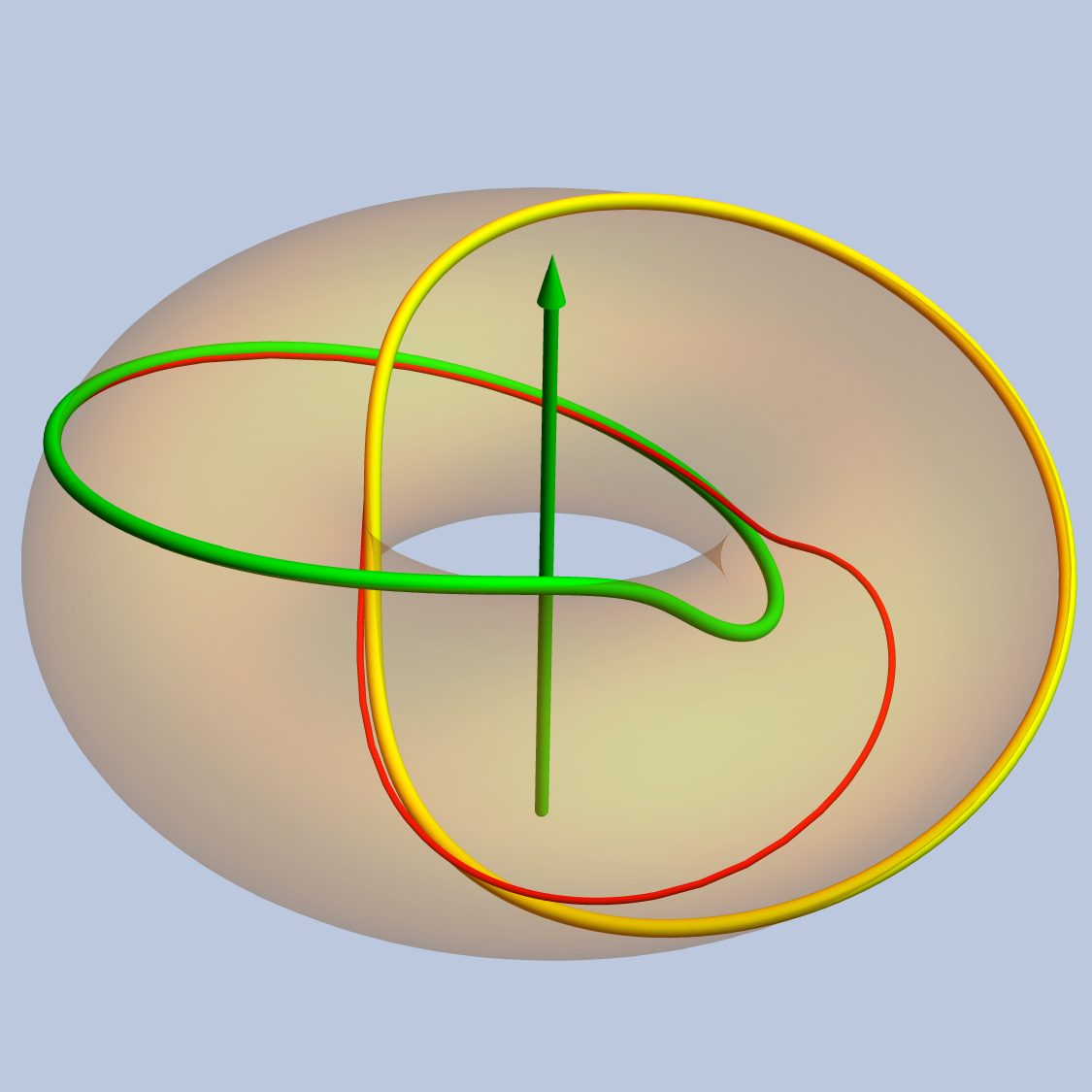}\quad\quad\quad
				\includegraphics[height=4cm,width=4cm]{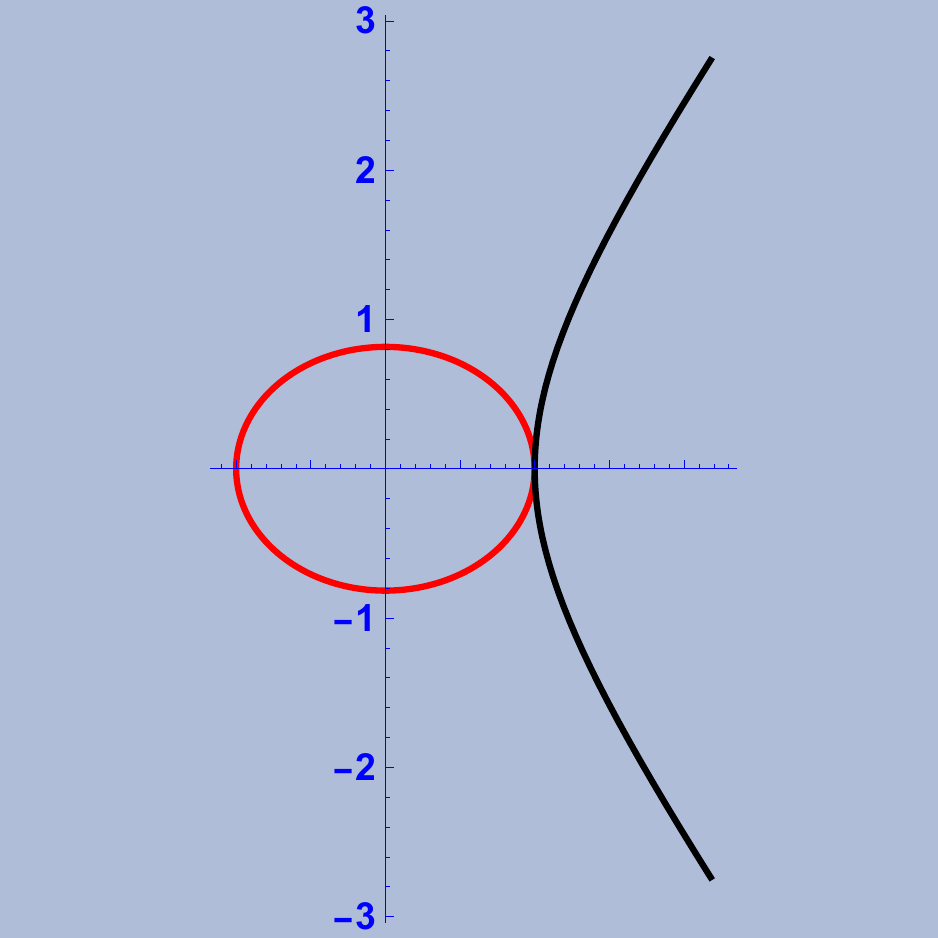}
			}
			\caption{\small{Left: The null curve $\gamma$ with constant bending $\bending=-1/2$ in the torical model for $\AdS$ (in red) and the two limiting null curves (in yellow and green, respectively). Right: The pair of star-shaped cousins $(\eta_+,\eta_-)$ in black and red, respectively.}} \label{F4}
		\end{center}
	\end{figure}
	\item Case $\bending=1$. The pair of cousins consists of a branch of an hyperbola $\eta_+$ and a line $\eta_-$. The null curve $\gamma$ is the orbit of a $1$-parameter group of isometries of type $(H,P)$. This non-closed null curve tends asymptotically to two different points of the ideal boundary. See Figure \ref{F5}.
	\begin{figure}[h]
		\begin{center}
			\makebox[\textwidth][c]{
				\includegraphics[height=4cm,width=4cm]{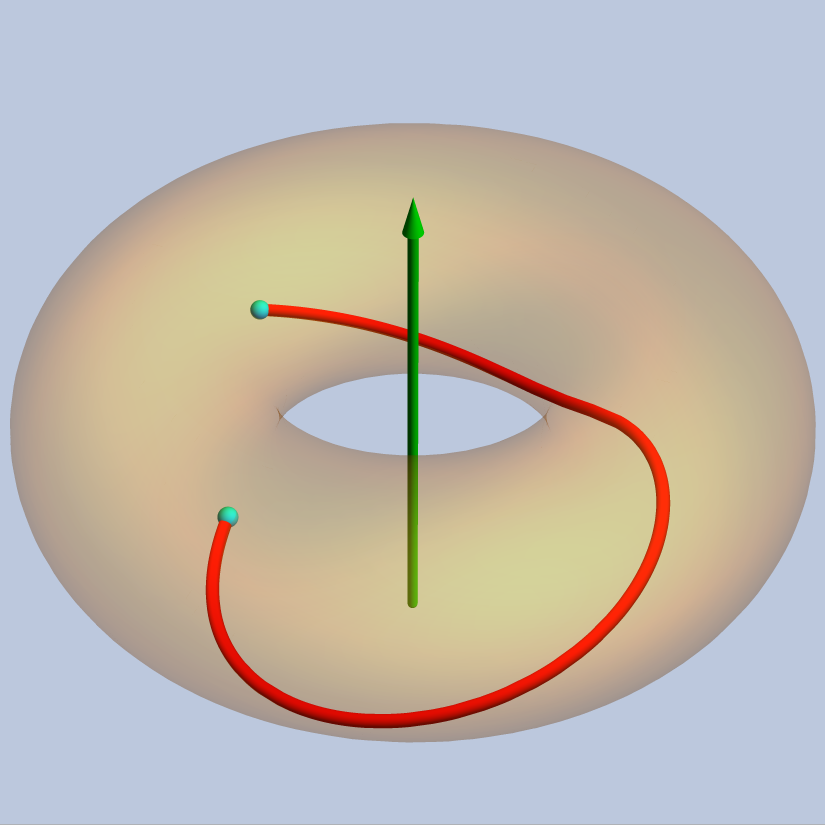}\quad\quad\quad
				\includegraphics[height=4cm,width=4cm]{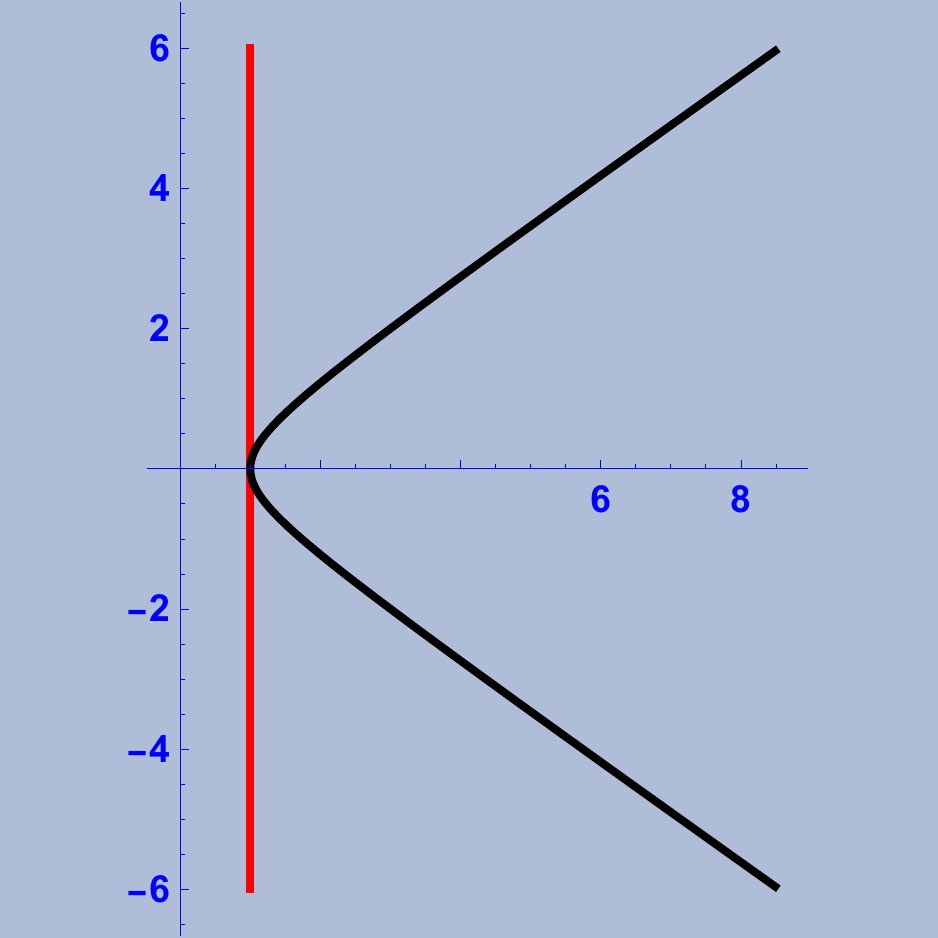}
			}
			\caption{\small{Left: The null curve $\gamma$ with constant bending $\bending=1$ in the torical model for $\AdS$ and the two limiting points. Right: The pair of star-shaped cousins $(\eta_+,\eta_-)$ in black and red, respectively.}} \label{F5}
		\end{center}
	\end{figure}
	\item Case $\bending>1$. In this case both $\eta_+$ and $\eta_-$ are two branches of hyperbolas and the null curve $\gamma$ is the orbit of a $1$-parameter group of isometries of type $(H,H)$. It tends asymptotically to two different points of the ideal boundary, as in the case $\bending=1$. See Figure \ref{F6}.
	\begin{figure}[h]
		\begin{center}
			\makebox[\textwidth][c]{
				\includegraphics[height=4cm,width=4cm]{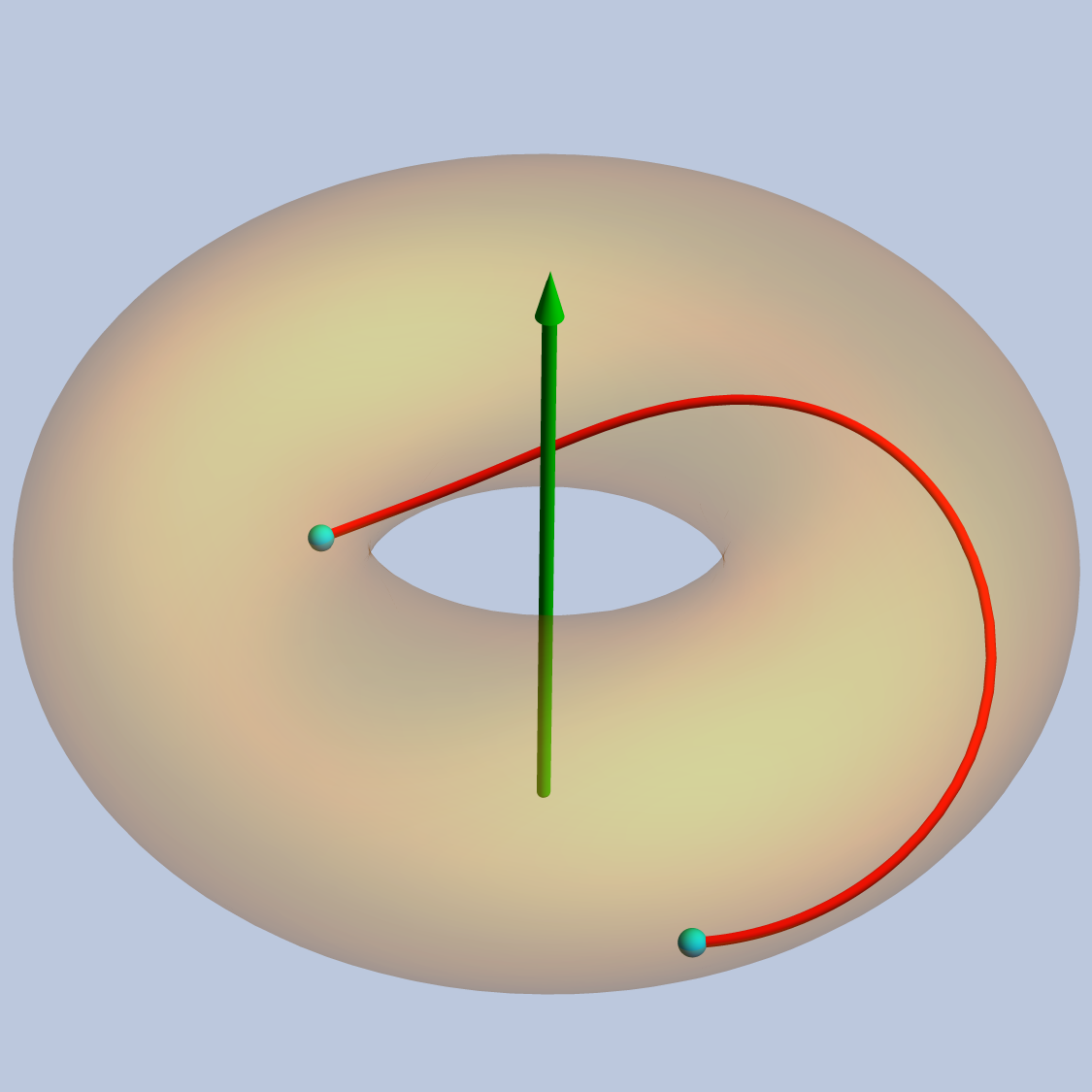}\quad\quad\quad
				\includegraphics[height=4cm,width=4cm]{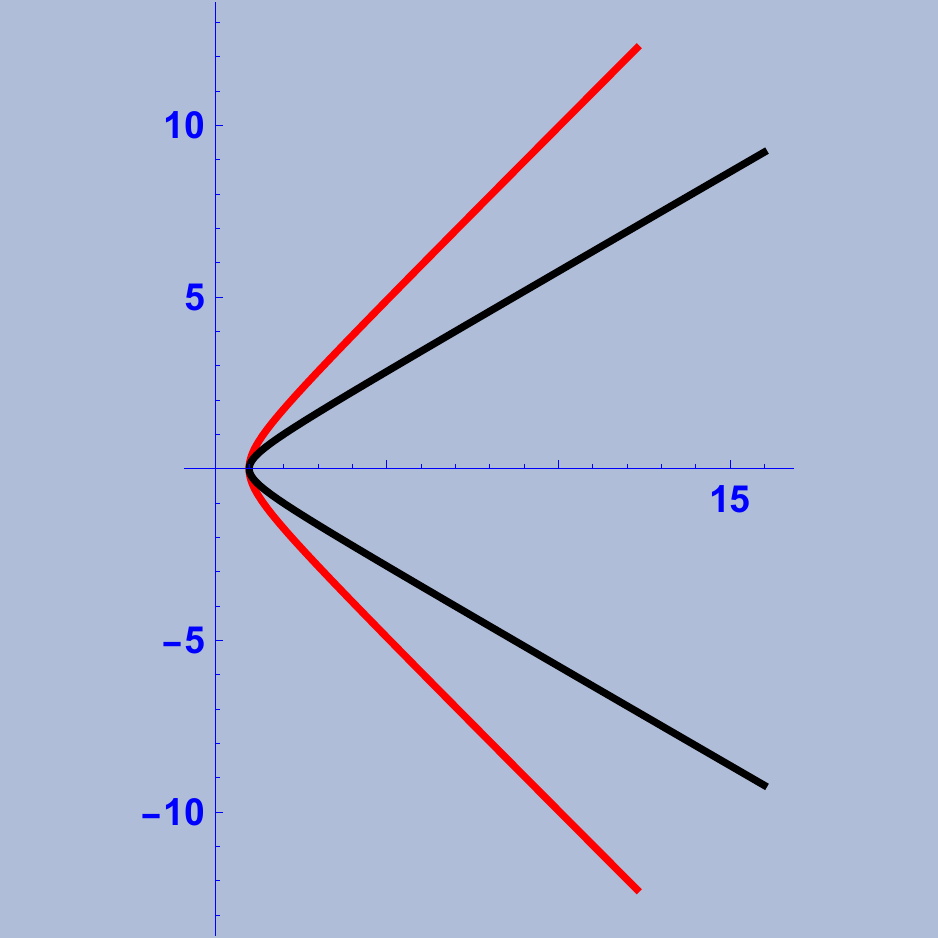}
			}
			\caption{\small{Left: The null curve $\gamma$ with constant bending $\bending=2$ in the torical model for $\AdS$ and the two limiting points. Right: The pair of star-shaped cousins $(\eta_+,\eta_-)$.}} \label{F6}
		\end{center}
	\end{figure}
\end{enumerate}

\section*{Appendix B. Construction of the Higher Order Flows}

\subsection{Notations}

The space of $n$-th order jets of functions $f\in\mathcal{C}^\infty(\mathbb{R},\mathbb{R})$ is denoted by $\mathfrak{J}^n(\mathbb{R},\mathbb{R})$. The independent variable is denoted by $s$, while the dependent variable and its virtual derivatives up to order $n$ are denoted by $u$ and $u_{(1)},...,u_{(n)}$, respectively. The projective limit of the natural sequence
$$\cdots\rightarrow\mathfrak{J}^n(\mathbb{R},\mathbb{R})\rightarrow\mathfrak{J}^{n-1}(\mathbb{R},\mathbb{R})\rightarrow\cdots\rightarrow\mathfrak{J}^1(\mathbb{R},\mathbb{R})\rightarrow\mathfrak{J}^0(\mathbb{R},\mathbb{R})\rightarrow\mathbb{R}\,,$$
is the \emph{infinite jet space}, denoted by $\mathfrak{J}(\mathbb{R},\mathbb{R})$. If $f$ is a smooth function of the variables $s$ and $t$, we put
$$j_s(f):(s,t)\in \R^2\longmapsto (s,f(s,t),\partial_sf(s,t),\dots ,\partial_s^n f(s,t),\dots) \in \mathfrak{J}(\mathbb{R},\mathbb{R})\,.$$

A function $\mathfrak{p}:\mathfrak{J}(\mathbb{R},\mathbb{R})\longrightarrow\mathbb{R}$ is a \emph{polynomial differential function} of order $n$ if there exists a polynomial $p\in\mathbb{R}[x_0,...,x_n]$ such that $\mathfrak{p}(\boldsymbol{u})=p(u,u_{(1)},...,u_{(n)})$ for every $\boldsymbol{u}=(s,u,u_{(1)},...,u_{(n)},...)\in\mathfrak{J}(\mathbb{R},\mathbb{R})$. The set of all polynomial differential functions is denoted by $\mathfrak{J}[\boldsymbol{u}]$.

The \emph{total derivative} $D$ and \emph{variational derivative} $\mathcal{E}$ of a polynomial differential function $\mathfrak{p}\in\mathfrak{J}[\boldsymbol{u}]$ are defined by
$$D(\mathfrak{p})\lvert_{\boldsymbol{u}}=\sum_{i=0}^\infty \frac{\partial p}{\partial x_i}\lvert_{\boldsymbol{u}} u_{(i+1)}\,, \quad\quad\quad \mathcal{E}(\mathfrak{p})\lvert_{\boldsymbol{u}}=\sum_{i=0}^\infty (-1)^iD^i\left(\frac{\partial p}{\partial x_i}\right)\lvert_{\boldsymbol{u}}\,,$$
respectively.

A polynomial differential function $\mathfrak{p}\in\mathfrak{J}[\boldsymbol{u}]$ is a \emph{total divergence} if there exists $\mathfrak{q}\in\mathfrak{J}[\boldsymbol{u}]$ such that $\mathfrak{p}=D(\mathfrak{q})$. We say that $\mathfrak{q}$ is a \emph{primitive} of $\mathfrak{p}$. The primitive is unique up to an additive constant.  If  $\mathfrak{p}$ is a total divergence we denote by $D^{-1}({\mathfrak p})$ the unique primitive of $\mathfrak{p}$ such that 
$D^{-1}({\mathfrak p})|_{\bf{0}}=0$. 

We next recall three basic properties:
\begin{enumerate}
	\item The polynomial differential function $\mathfrak{p}\in\mathfrak{J}[\boldsymbol{u}]$ is a total divergence if and only if $\mathcal{E}(\mathfrak{p})=0$.
	\item For every $\mathfrak{p}\in\mathfrak{J}[\boldsymbol{u}]$, $u_{(1)}\mathcal{E}(\mathfrak{p})$ is a total divergence.
	\item For every $\mathfrak{p}\in\mathfrak{J}[\boldsymbol{u}]$, $uD\mathcal{E}(\mathfrak{p})$ is a total divergence.
\end{enumerate}

Let ${\mathcal D}: {\mathfrak{J}[\boldsymbol{u}] }\longrightarrow\mathfrak{J}[\boldsymbol{u}]$ be the linear operator defined by
\begin{equation}\label{D}
	{\mathcal D}(\mathfrak{p})=D^3(\mathfrak{p})-4uD(\mathfrak{p})-2u_{(1)}\mathfrak{p}\,.
\end{equation}
Denote by $\mathfrak{P}[\boldsymbol{u}]$ the kernel of  $\mathcal{E}\circ{\mathcal D}$. For each $\mathfrak{p}\in \mathfrak{P}[\boldsymbol{u}]$, ${\mathcal D}(\mathfrak{p})$ belongs to ${\rm Ker}(\mathcal{E})$. Thus $D^{-1}{\mathcal D} : \mathfrak{P}[\boldsymbol{u}]\longrightarrow \mathfrak{J}[\boldsymbol{u}]$ is a well defined linear operator.
 
\subsection{The KdV Differential Polynomials and the KdV Hierarchy} 

The KdV polynomial differential functions ${\mathfrak p}_n\in {\mathfrak{J}[\boldsymbol{u}] }$ can be defined by the Lenard recursive formula
\begin{equation}\label{Leneard}
	{\mathfrak p}_0 = 1\,,\quad\quad\quad {\mathfrak p}_1 = u\,,\quad\quad\quad D({\mathfrak p}_n) = {\mathcal D} ({\mathfrak p}_{n-1})\,,\quad n\ge 2\,,
\end{equation}
where $D$ is the total derivative and $\mathcal{D}$ is the linear operator defined in \eqref{D}. From the definition \eqref{Leneard} it follows that ${\mathfrak p}_n$ is a polynomial differential function of order $2(n-1)$. The $n$-th KdV equation is the evolution equation of order $2n+1$ given by
\begin{equation}\label{KdVn}
\partial_t f + D({\mathfrak p}_{n+1})|_{j_s (f)}=0\,.
\end{equation}
In particular, the $0$th KdV equation is the wave equation $\partial_tf+\partial_sf=0$, the $1$st KdV equation is, precisely, \eqref{KdV2}, and the $2$nd KdV equation is
$$\partial_sf+30f^2\partial_sf-20\partial_sf\partial_s^2f-10f\partial_s^3f+\partial_s^5f=0\,.$$

Define a family of polynomial differential functions ${\mathfrak h}_n\in {\mathfrak{J}[\boldsymbol{u}] }$ by
\begin{equation}\label{clKdVn}
	{\mathfrak h}_n({\boldsymbol u})=\int_0^1 { \mathfrak p}_{n}|_{\epsilon {\boldsymbol u} }u\,d\epsilon\,.
\end{equation}
Then, ${\mathfrak p}_{n} = {\mathcal E}({\mathfrak h}_n)$ and (\ref{KdVn}) can be rewritten in the bi-Hamiltonian form
\begin{equation}\label{KdVHam}
 \partial_t f + D {\mathcal E}({\mathfrak h}_{n+1})|_{j_s (f)}=0\,,\quad\quad\quad   \partial_t f + {\mathcal D} {\mathcal E}({\mathfrak h}_{n})|_{j_s (f)}=0\,.
\end{equation}
The relevance of the Hamiltonian formulation of the KdV hierarchy stems from the fact that (\ref{KdVHam}) implies that
the functionals $f\to \int {\mathfrak h}_n|_{j_s(f)}ds$ are conservation laws of the KdV equation. Using $\eqref{clKdVn}$, the conservation laws can be explicitly computed. 
%For instance the first four conservation laws are:
%\begin{eqnarray*}
%	f&\longmapsto& \int f\,ds\,,\\  
%	f&\longmapsto& \int \frac{1}{2}f^2\,ds\,,\\   
%	f&\longmapsto& \int \left(\frac{1}{2} f\partial_s^2f-f^3\right)ds\,,\\
%	f&\longmapsto& \int(\partial_s^6f-14f\partial_s^4f-20\partial_sf\partial_s^3f-21[\partial_s^2f]^2+70f^2\partial_s^2f+70f\partial_sf-35f^4)ds.
%\end{eqnarray*}

\begin{remark} \emph{The existence of infinite countably many conservation laws in involution for the KdV equation was the starting point of the Lax's proof of the Cauchy problem for rapidly decaying initial data (\cite{L0}) and for the implementation of the inverse scattering method (\cite{GGKM}).  In addition to that, this fact was used again in \cite{L1,L2} to prove the existence, for every $n\in {\mathbb N}$, of $n$-dimensional tori in the space of periodic functions $f:\R\longrightarrow \R$ which are invariant by the KdV flow. Moreover, the time-evolution of $f\in {\mathbb T}^n$ is almost-periodic in time.}
\end{remark}

\subsection{Higher Order LIEN Flows}  

Let $\{{\mathfrak r}_n\}$, $\{{\mathfrak q}_n\}$, $\{{\mathfrak a}_n\}$, and $\{{\mathfrak b}_n\}$ be the sequences of polynomial differential functions defined by
\begin{eqnarray}
	&{\mathfrak r}_0&=2{\mathfrak p}_0\,,\quad\quad\quad {\mathfrak r}_1=2{\mathfrak p}_1-4\,,\quad\quad\quad  {\mathfrak r}_n=2 {\mathfrak p}_n+4 {\mathfrak r}_{n-1}\,,\label{PLIENDF}\\
	&{\mathfrak q}_0&=2{\mathfrak p}_0\,,\quad\quad\quad {\mathfrak q}_1=2{\mathfrak p}_1+4\,,\quad\quad\quad  {\mathfrak p}_n=2 {\mathfrak p}_n-4 {\mathfrak r}_{n-1}\,,\label{PLIENDF2}
\end{eqnarray}
and by
\begin{equation}\label{LIENDF} 
	{\mathfrak a}_n={\mathfrak r}_n+{\mathfrak q}_n\,,\quad\quad\quad {\mathfrak b}_n={\mathfrak r}_n-{\mathfrak q}_n\,.
\end{equation}
The $n$-th LIEN flow is the evolution equation for null curves in $\AdS$ given by
\begin{equation}\label{LIENn}
	\partial_t\gamma=\frac{1}{\sqrt{2}}\left({\mathfrak a}_n+u{\mathfrak b}_n-\frac{1}{2}D^2{\mathfrak b}_n\right)|_{j_s(\kappa)} \,T+\frac{1}{2}D{\mathfrak b}_n|_{j_s(\kappa)} \,N+\frac{1}{\sqrt{2}}{\mathfrak b}_n|_{j_s(\kappa)} \, B\,.
\end{equation}
The $0$th LIEN flow is the trivial flow $\partial_t\gamma = 2\sqrt{2}\,T$, the $1$st LIEN flow gives back (\ref{LIEN2}). Explicit expressions of the higher order flows can be obtained with a simple recursive formula. For instance, the $2$nd LIEN flow is given by
$$\partial_t\gamma= -2\sqrt{2}\left(\partial^2_s\bending -\bending^2+8\right)T+8\partial_s\bending N+8\sqrt{2}\bending B\,.
$$

We next prove the analogue of Theorem \ref{induced} for higher order LIEN flows.

\begin{thm}\label{inducedn} Let $\gamma:J\times I\subseteq\R^2\longrightarrow\AdS$ be a solution of the $n$-th LIEN flow \eqref{LIENn}, then the bending $\bending(s,t)$ of $\gamma(s,t)$ evolves according to the n-th KdV equation \eqref{KdVn}.  
	
Conversely, if $\bending:J\times I\subseteq\R^2\longrightarrow\R$ is a smooth solution of the n-th KdV equation \eqref{KdVn}, then 
there exist a solution of the $n$-th LIEN flow \eqref{LIENn} with bending $\bending$. Moreover, any other solution of the $n$-th LIEN flow with bending $\bending$ is congruent to $\gamma$.
\end{thm}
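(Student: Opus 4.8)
The plan is to mirror the two-step argument of Theorem~\ref{induced}, replacing the single Lax matrix $\mathcal{P}_\lambda$ of \eqref{KP} by the $n$-th member of the KdV Lax hierarchy, and then to add a short rigidity argument for the uniqueness clause. Throughout, all polynomial differential functions are understood to be evaluated along $j_s(\bending)$, so that the total derivative $D$ acts as $\partial_s$, and $\mathcal{K}_\lambda$ is as in \eqref{KP}.

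For the forward implication I would argue as follows. If $\gamma$ solves the $n$-th LIEN flow \eqref{LIENn}, then for each $t$ the curve $\gamma_t$ is a null curve without inflection points, parameterized by proper time, with bending $\bending_t$, so its spinor frame $(F_+,F_-)$ satisfies $F_\pm^{-1}\partial_sF_\pm=\mathcal{K}_{\pm1}$ by \eqref{dF+}--\eqref{dF-}, while $F_\pm^{-1}\partial_tF_\pm=\mathcal{B}_\pm$ for some $\mathfrak{sl}(2,\R)$-valued functions $\mathcal{B}_\pm$. Commutation of mixed partials gives the zero-curvature equations $\partial_t\mathcal{K}_{\pm1}-\partial_s\mathcal{B}_\pm-[\mathcal{K}_{\pm1},\mathcal{B}_\pm]=0$, and solving these entrywise (exactly as \eqref{coef} is obtained from \eqref{compatibility}) shows that each $\mathcal{B}_\pm$ is determined by its lower-left entry $\mathfrak{s}_\pm$:
$$\mathcal{B}_\pm=\begin{pmatrix} -\tfrac{1}{2}\partial_s\mathfrak{s}_\pm & -\tfrac{1}{2}\partial_s^2\mathfrak{s}_\pm+(\bending\pm1)\mathfrak{s}_\pm \\ \mathfrak{s}_\pm & \tfrac{1}{2}\partial_s\mathfrak{s}_\pm\end{pmatrix},\qquad \partial_t\bending=-\tfrac{1}{2}\partial_s^3\mathfrak{s}_\pm+2(\bending\pm1)\partial_s\mathfrak{s}_\pm+(\partial_s\bending)\,\mathfrak{s}_\pm\,.$$
On the other hand, writing $\partial_t\gamma=\mathcal{F}^*(\omega_1^2)T+\mathcal{F}^*(\omega_1^3)N+\mathcal{F}^*(\omega_1^4)B$ and reading off $\pi_s^*(\omega)$ as in Section~2, the $N$- and $B$-components of $\partial_t\gamma$ equal $\tfrac{1}{2}\partial_s(\mathfrak{s}_+-\mathfrak{s}_-)$ and $\tfrac{1}{\sqrt{2}}(\mathfrak{s}_+-\mathfrak{s}_-)$, while the $T$-component equals $\tfrac{1}{\sqrt{2}}(\mathfrak{s}_++\mathfrak{s}_-+\bending(\mathfrak{s}_+-\mathfrak{s}_-)-\tfrac{1}{2}\partial_s^2(\mathfrak{s}_+-\mathfrak{s}_-))$. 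Comparing with \eqref{LIENn} and \eqref{LIENDF} forces $\mathfrak{s}_+=\mathfrak{r}_n$ and $\mathfrak{s}_-=\mathfrak{q}_n$. Substituting this into the two evolution equations displayed above and using the recursions \eqref{PLIENDF}, \eqref{PLIENDF2}, the Lenard relation $D\mathfrak{p}_{n+1}=\mathcal{D}(\mathfrak{p}_n)$ from \eqref{Leneard}, and the definition \eqref{D} of $\mathcal{D}$, both equations collapse to $\partial_t\bending+D(\mathfrak{p}_{n+1})=0$, i.e.\ the $n$-th KdV equation \eqref{KdVn}; for $n=1$ this reproduces \eqref{coef}.

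For the converse I would run this computation backwards. Given a solution $\bending$ of \eqref{KdVn}, set $\mathfrak{s}_+=\mathfrak{r}_n$, $\mathfrak{s}_-=\mathfrak{q}_n$, define $\mathcal{B}_\pm$ by the displayed matrices, and observe that the same recursive identity, now invoking that $\bending$ satisfies \eqref{KdVn}, yields $d\Lambda_\pm+\Lambda_\pm\wedge\Lambda_\pm=0$ for $\Lambda_\pm=\mathcal{K}_{\pm1}\,ds+\mathcal{B}_\pm\,dt$. Since $J\times I$ is simply connected there exist $F_\pm:J\times I\to\SL$ with $dF_\pm=F_\pm\Lambda_\pm$, unique up to left multiplication by a constant element of $\SL$; these are the $n$-th extended frames of $\bending$. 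Putting $\gamma=F_+F_-^{-1}$, the $ds$-part $\mathcal{K}_{\pm1}\,ds$ makes each $\gamma_t$ a null curve without inflection points, parameterized by proper time, with bending $\bending_t$, and $(F_+,F_-)$ its spinor frame (exactly as in Theorem~\ref{induced}), so $\mathcal{F}=\pi_s\circ(F_+,F_-)$ is the Cartan frame of $\gamma$; computing $\mathcal{F}^*(\omega_1^2),\mathcal{F}^*(\omega_1^3),\mathcal{F}^*(\omega_1^4)$ from $\pi_s^*(\omega)$ in terms of $\mathfrak{r}_n,\mathfrak{q}_n$ and recombining via \eqref{LIENDF}, the identity $d\gamma=\mathcal{F}^*(\omega_1^2)T+\mathcal{F}^*(\omega_1^3)N+\mathcal{F}^*(\omega_1^4)B$ becomes precisely \eqref{LIENn}. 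For the rigidity clause: if $\widetilde\gamma$ is another solution of \eqref{LIENn} with bending $\bending$ and spinor frame $(\widetilde F_+,\widetilde F_-)$, then by the forward step $\widetilde F_\pm^{-1}d\widetilde F_\pm=\Lambda_\pm$, the very same $1$-forms (they are algebraically determined by $\bending$ and its $s$-derivatives), whence $\widetilde F_\pm F_\pm^{-1}$ is constant on the connected set $J\times I$, say $\widetilde F_\pm=A_\pm F_\pm$ with $A_\pm\in\SL$, so $\widetilde\gamma=A_+\gamma A_-^{-1}=L_{(A_+,A_-)}\circ\gamma$ is congruent to $\gamma$.

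The hard part will be the algebraic identity underlying the forward step: namely that, after inserting the geometrically forced values $\mathfrak{s}_+=\mathfrak{r}_n$ and $\mathfrak{s}_-=\mathfrak{q}_n$, the two a priori distinct scalar evolution equations for $\bending$ coming from the $F_+$ and $F_-$ factors in fact coincide and both equal $\partial_t\bending+D(\mathfrak{p}_{n+1})=0$. This is an identity in the differential polynomial ring $\mathfrak{J}[\boldsymbol{u}]$ that has to be extracted from the recursive definitions \eqref{PLIENDF}--\eqref{LIENDF}, the Lenard recursion \eqref{Leneard}, the operator \eqref{D}, and the three divergence properties recorded in Appendix~B.1; it is where all the genuine content of the statement is concentrated, the remainder being bookkeeping parallel to the proof of Theorem~\ref{induced}.
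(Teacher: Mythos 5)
Your proof is correct and rests on the same mechanism as the paper's: a zero-curvature (Lax) representation of the $n$-th KdV equation built from the recursions \eqref{PLIENDF}--\eqref{LIENDF} and the Lenard formula \eqref{Leneard}, followed by integration of the resulting Maurer--Cartan form and a uniqueness-up-to-the-symmetry-group argument. The difference is one of packaging: the paper assembles the flow coefficients \eqref{pt} into a single $\mathfrak{g}$-valued pair $({\mathfrak K},{\mathfrak P}_n)$ as in \eqref{MPDF} and works with the Cartan frame $\mathcal{F}:J\times I\rightarrow\Cartan$, so that existence and the congruence clause come at once from integrating $\Gamma={\mathfrak K}\,ds+{\mathfrak P}_n\,dt$ (the frame being unique up to the restricted automorphism group), whereas you work on the spin cover with the two $\mathfrak{sl}(2,\R)$-valued forms $\Lambda_\pm=\mathcal{K}_{\pm1}\,ds+\mathcal{B}_\pm\,dt$; via $\pi_s$ the two formulations are equivalent (and the paper itself uses the $2\times 2$ level in the converse of Theorem \ref{induced}), but your version has the merit of isolating the genuine content in one explicit differential-polynomial identity, which the paper only asserts. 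That flagged identity does hold for every $n\ge1$ and closes your ``hard part'': with $\mathfrak{s}_+=\mathfrak{r}_n$, $\mathfrak{s}_-=\mathfrak{q}_n$ the two induced scalar evolutions are $\partial_t\bending=-\tfrac12\mathcal{D}(\mathfrak{r}_n)+2D(\mathfrak{r}_n)$ and $\partial_t\bending=-\tfrac12\mathcal{D}(\mathfrak{q}_n)-2D(\mathfrak{q}_n)$ along $j_s(\bending)$, so the claim amounts to $\mathcal{D}(\mathfrak{r}_n)-4D(\mathfrak{r}_n)=2\mathcal{D}(\mathfrak{p}_n)=\mathcal{D}(\mathfrak{q}_n)+4D(\mathfrak{q}_n)$; for $n=1$ both sides equal $2u_{(3)}-12uu_{(1)}$ by direct computation, and for $n\ge2$ the recursions $\mathfrak{r}_n=2\mathfrak{p}_n+4\mathfrak{r}_{n-1}$ and $\mathfrak{q}_n=2\mathfrak{p}_n-4\mathfrak{q}_{n-1}$ give $\mathcal{D}(\mathfrak{r}_n)-4D(\mathfrak{r}_n)=2\mathcal{D}(\mathfrak{p}_n)+4\left(\mathcal{D}(\mathfrak{r}_{n-1})-4D(\mathfrak{r}_{n-1})\right)-8D(\mathfrak{p}_n)=2\mathcal{D}(\mathfrak{p}_n)$ by induction together with the Lenard relation $D(\mathfrak{p}_n)=\mathcal{D}(\mathfrak{p}_{n-1})$, and likewise for $\mathfrak{q}_n$. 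Two small caveats: the recursion in \eqref{PLIENDF2} must be read as $\mathfrak{q}_n=2\mathfrak{p}_n-4\mathfrak{q}_{n-1}$ (a typo in the paper), and the case $n=0$ is anomalous under the paper's normalizations (the $0$th LIEN flow induces $\partial_t\bending=2\partial_s\bending$ rather than $\partial_t\bending+\partial_s\bending=0$), so your collapse to $\partial_t\bending+D(\mathfrak{p}_{n+1})=0$, like the theorem itself, should be read for $n\ge1$.
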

\begin{proof} We begin by proving a Lax pair formulation of the $n$-th KdV equation \eqref{KdVn} which is suitable for our purposes. Fix $n\ge 0$ and consider the polynomial differential functions
\begin{equation}\label{pt}
\begin{cases}
{\mathfrak x}^2_1 &=\frac{1}{\sqrt{2}}( {\mathfrak a}_n+u {\mathfrak b}_n-\frac{1}{2}D^2 {\mathfrak b}_n),\\
{\mathfrak x}^3_1 &=\frac{1}{2}D {\mathfrak b}_n,\\
{\mathfrak x}^4_1 &=\frac{1}{\sqrt{2}} {\mathfrak b}_n,\\
{\mathfrak x}^2_2 &=-\frac{1}{2}D {\mathfrak a}_n,\\
{\mathfrak x}^3_2 &=-\frac{1}{\sqrt{2}} {\mathfrak a}_n,\\
{\mathfrak x}^2_3 &=-\frac{1}{\sqrt{2}} ({\mathfrak b}_n+u{\mathfrak a}_n-\frac{1}{2}D^2{\mathfrak a}_n).
\end{cases}
\end{equation}
Let ${\mathfrak K}$, ${\mathfrak P}_n$ be the ${\mathfrak g}$-valued polynomial differential functions
\begin{equation}\label{MPDF}
	{\mathfrak K}=\begin{pmatrix} 0 & 0 & 0 & \sqrt{2} \\ \sqrt{2} & 0 & \sqrt{2}\,u & 0 \\ 0 & \sqrt{2} & 0 & -\sqrt{2}\,\,u \\ 0 & 0 & -\sqrt{2} & 0 \end{pmatrix},\quad
{\mathfrak P}_n=\begin{pmatrix} 0 & {\mathfrak x}^4_1 & {\mathfrak x}^3_1 & {\mathfrak x}^2_1\\
 {\mathfrak x}^2_1,&{\mathfrak x}^2_2& {\mathfrak x}^2_3&0\\
  {\mathfrak x}^3_1& {\mathfrak x}^3_2&0&- {\mathfrak x}^2_3\\
   {\mathfrak x}^4_1&0&- {\mathfrak x}^3_2& {\mathfrak x}^2_2
 \end{pmatrix}.
\end{equation}
Recall that $\mathfrak{g}$ is the Lie algebra $\mathfrak{g}=\{X\in\R^{2,2}\,\lvert\, X^tg+gX=0\}$ (see Section 2).

Using (\ref{PLIENDF}), (\ref{LIENDF}) and taking into account the Lenard recursive formula (\ref{Leneard}) it follows that a smooth function $\bending:J\times I\subseteq\R^2\longrightarrow \R$ is a solution of the $n$-th KdV equation \eqref{KdVn} if and only if the exterior differential $1$-form $\Gamma={\mathfrak K}|_{j_s(\bending)}ds+{\mathfrak P}_n|_{j_s(\bending)}dt$ satisfies the Maurer-Cartan equation $d\Gamma+\Gamma\wedge \Gamma=0$. 

Let $\gamma:J\times I\subseteq\R^2\longrightarrow\AdS$ be a solution of the $n$-th LIEN flow \eqref{LIENn}. Then the Cartan frame field ${\mathcal F}$ is a solution of the linear system $d{\mathcal F} = {\mathcal F}\,\Gamma$.  Then, from what was said above, it follows that the bending is a solution of the $n$-th KdV equation \eqref{KdVn}.  Conversely, suppose that $\bending$ is a solution of the $n$-th KdV equation.  Consider the 1-form $\Gamma$.  Since $d\Gamma+\Gamma\wedge \Gamma=0$, there exist a smooth map ${\mathcal F}:J\times I\subseteq\R^2\longrightarrow \Cartan$ (unique up to the action of the restricted automorphism group) such that $d{\mathcal F}={\mathcal F}\,\Gamma$. Let $\gamma:J\times I\subseteq\R^2\longrightarrow \AdS$ be the first column vector of $ {\mathcal F}$.  Then $\gamma$ is a variation of null curves parameterized by the proper time whose bending evolves according to the $n$-th KdV equation \eqref{KdVn} with Cartan frame field $ {\mathcal F}$.  From (\ref{pt}) and (\ref{MPDF}) it follows that $\gamma$ is a solution of the $n$-th LIEN flow.\end{proof}

Since the bending of a solution of the LIEN flow evolves with the KdV equation, we deduce the following result.

\begin{cor} The functionals $\gamma\longmapsto \int_{\gamma}{\mathfrak h}_n|_{j_s(\bending)}ds$ are constant along the solutions of the LIEN flow.
\end{cor}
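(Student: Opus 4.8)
The plan is to derive this as a direct consequence of Theorem \ref{induced} together with the conservation-law property of the functionals $\int\mathfrak{h}_n\,ds$ along the KdV flow, which is recorded in Appendix B immediately after \eqref{KdVHam}. Concretely: let $\gamma:J\times I\subseteq\R^2\longrightarrow\AdS$ be a solution of the LIEN flow \eqref{LIEN2}. By the first assertion of Theorem \ref{induced}, for each $t\in I$ the bending $\bending(\cdot,t)$ is a solution of the KdV equation \eqref{KdV2}, i.e.\ of the $1$st KdV equation \eqref{KdVn} with $n=1$; and by the discussion following \eqref{KdVHam}, the functionals $f\longmapsto\int\mathfrak{h}_n|_{j_s(f)}\,ds$ are conservation laws of that equation. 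Hence $t\longmapsto\int_\gamma\mathfrak{h}_n|_{j_s(\bending)}\,ds$ is constant, which is exactly the claim.

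If one prefers a self-contained argument, I would spell out the conservation law itself. Differentiating under the integral sign and using $\mathcal{E}(\mathfrak{h}_n)=\mathfrak{p}_n$ after repeated integration by parts in $s$ gives
\[
\frac{d}{dt}\int_\gamma\mathfrak{h}_n|_{j_s(\bending)}\,ds \;=\; \int_\gamma\mathfrak{p}_n|_{j_s(\bending)}\,\partial_t\bending\,ds,
\]
the boundary terms dropping because $\bending(\cdot,t)$ is periodic (with $t$-independent period, since \eqref{KdV2} is translation-invariant in $s$, cf.\ the argument in Proposition \ref{CauchyP}) or, in the Schwartz-class setting, rapidly decaying. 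From the $1$st KdV equation $\partial_t\bending=-D(\mathfrak{p}_2)|_{j_s(\bending)}$, so the right-hand side equals $-\int_\gamma\mathfrak{p}_n\,D(\mathfrak{p}_2)\,ds$, and this vanishes because $\mathfrak{p}_n\,D(\mathfrak{p}_2)$ is a total divergence --- equivalently, because the conserved densities $\mathfrak{h}_n$ of the KdV hierarchy are in involution, which is precisely what the bi-Hamiltonian identity \eqref{KdVHam} encodes.

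The only genuine point requiring attention --- and the one I would be most careful to state precisely --- is the class of null curves for which $\int_\gamma\mathfrak{h}_n|_{j_s(\bending)}\,ds$ is a well-defined real number and for which the integration-by-parts boundary terms really cancel: closed null curves (integral over one period) or null curves with $\bending\in\mathcal{S}(\R,\R)$. Under the standing assumptions used elsewhere in the paper this holds automatically, so no extra hypotheses are needed; everything else is a formal consequence of Theorem \ref{induced} and the constructions of Appendix B.
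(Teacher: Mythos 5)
Your proposal is correct and follows essentially the same route as the paper, whose entire argument is the observation that the bending of a LIEN-flow solution evolves by the KdV equation (Theorem \ref{induced}) and that the functionals $f\mapsto\int\mathfrak{h}_n|_{j_s(f)}\,ds$ are conservation laws of the KdV equation, as recorded after \eqref{KdVHam}. Your second and third paragraphs merely spell out details (the involution argument $\int\mathfrak{p}_n\,D(\mathfrak{p}_2)\,ds=0$ and the periodicity or Schwartz-class hypotheses needed to kill boundary terms) that the paper asserts without proof, so nothing essential differs.
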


\bibliographystyle{amsalpha}

\end{document}